\theoremstyle{plain} 
\newtheorem{Thm}[equation]{Theorem}
\newtheorem*{Thm*}{Theorem} 
\newtheorem{Prop}[equation]{Proposition}
\newtheorem{Lem}[equation]{Lemma} 
\newtheorem{Rmk}[equation]{Remark}
\numberwithin{equation}{section}
\newcommand{\GL}{\operatorname{GL}}
\newcommand{\GSp}{\operatorname{GSp}}
\newcommand{\Ind}{\operatorname{Ind}}
\newcommand{\ind}{\operatorname{ind}}
\newcommand{\supp}{\operatorname{supp}}
\newcommand{\GLt}{\widetilde{\operatorname{GL}}}
\newcommand{\SLt}{\widetilde{\operatorname{SL}}}
\newcommand{\Bt}{\widetilde{B}}
\newcommand{\Tt}{\widetilde{T}} 
\newcommand{\Pt}{\widetilde{P}}
\newcommand{\Qt}{\widetilde{Q}} 
\newcommand{\MPt}{\widetilde{M}_P} 
\newcommand{\MQt}{\widetilde{M}_Q}
\newcommand{\e}{\operatorname{e}}
\newcommand{\GLtt}{\widetilde{\operatorname{GL}}^{(2)}}
\newcommand{\Tte}{\widetilde{T}^{\e}}
\newcommand{\Ttt}{\widetilde{T}^{(2)}}
\newcommand{\Ptt}{\widetilde{P}^{(2)}}
\newcommand{\MPtt}{\widetilde{M}_P^{(2)}}
\newcommand{\MQtt}{\widetilde{M}_Q^{(2)}}
\newcommand{\sigGLt}{{^{\sigma}\widetilde{\operatorname{GL}}}}
\newcommand{\GLtwo}{\GL^{(2)}}
\newcommand{\iQ}{{^\iota Q}}
\newcommand{\iMQt}{\widetilde{M}_{^\iota Q}}
\newcommand{\itheta}{{^\iota\theta}}
\newcommand{\Mt}{\widetilde{M}}
\newcommand{\At}{\widetilde{\mathbb{A}}}
\newcommand{\Ht}{\widetilde{H}}
\newcommand{\Tm}{T^{\operatorname{m}}}
\newcommand{\Tmt}{\widetilde{T}^{\operatorname{m}}}
\newcommand{\m}{{\operatorname{m}}}
\newcommand{\Mtt}{\widetilde{M}^{(2)}}
\newcommand{\timest}{\widetilde{\times}}
\newcommand{\otimest}{\,\widetilde{\otimes}\,}
\newcommand{\otimestt}{\widetilde{\otimes}}
\newcommand{\pin}{\pi^{(2)}}
\newcommand{\etat}{\tilde{\eta}}
\newcommand{\chit}{\tilde{\chi}}
\newcommand{\iotat}{\tilde{\iota}}
\newcommand{\rr}{\mathbf{r}}
\newcommand{\OF}{\mathcal{O}_F}
\newcommand{\Res}{\operatorname{Res}}
\newcommand{\rec}{\operatorname{rec}}
\renewcommand{\Re}{\operatorname{Re}}
\newcommand{\C}{\mathbb C}
\newcommand{\A}{\mathbb{A}}
\newcommand{\Z}{\mathbb{Z}}
\newcommand{\R}{\mathbb{R}}
\newcommand{\Th}{{\text{th}}}
\newcommand{\Id}{\operatorname{Id}}
\newcommand{\s}{\mathbf{s}}
\newcommand{\ie}{{\em i.e. }}
\title[Metaplectic Eisenstein series and symmetric square]{On a certain metaplectic
  Eisenstein series and the twisted symmetric square $L$-function} \author{Shuichiro Takeda}
\address{Department of Mathematics, University of Missouri,
  Columbia, MO 65211}
\begin{document}

\maketitle

\begin{abstract} 
In our earlier paper, based on a paper by Bump and Ginzburg, we used
an Eisenstein series on the double cover of $\GL(r)$ to obtain the
integral representation of the twisted symmetric square $L$-function
of $\GL(r)$. Using that, we showed that the (incomplete) twisted
symmetric square $L$-function of $\GL(r)$ is holomorphic for
$\Re(s)> 1$. In this paper, we will determine the possible poles of
this Eisenstein series more precisely and show that the (incomplete) twisted
symmetric square $L$-function is entire except possible simple poles at $s=0$ and $s=1$.
\end{abstract}


\section{\bf Introduction}


Let $\pi\cong\otimes'_v\pi_v$ be an irreducible cuspidal automorphic
representation of $\GL_r(\A)$ and $\chi$ a unitary Hecke character on
$\A^\times$, where $\A$ is the ring of adeles over a number field
$F$. By the local Langlands correspondence by Harris-Taylor \cite{HT}
and Henniart \cite{He}, each $\pi_v$ corresponds to an $r$-dimensional
representation $\rec(\pi_v)$ of the Weil-Deligne group $WD_{F_v}$ of
$F_v$. We can also consider the twist of $\rec(\pi_v)$ by $\chi_v$, namely,
\[ \rec(\pi_v)\otimes\chi_v:WD_{F_v}\rightarrow \GL_r(\C),
\] where $\chi_v$ is viewed as a character of $WD_{F_v}$ via local
class field theory. Now for each homomorphism
\[ \rho:\GL_r(\C)\rightarrow\GL_N(\C),
\] one can associate the local $L$-factor
$L_v(s,\pi_v,\rho\circ\rec(\pi_v)\otimes\chi_v)$ of Artin type. Then
one can define the automorphic $L$-function by
\[
L(s,\pi,\rho\otimes\chi):=\prod_vL_v(s,\pi_v,\rho\circ\rec(\pi_v)\otimes\chi_v).
\] In particular in this paper, we consider the case where $\rho$ is
the symmetric square map
\[ Sym^2:\GL_r(\C)\rightarrow\GL_{\frac{1}{2}r(r+1)}(\C),
\] namely we consider the twisted symmetric square $L$-function
$L(s,\pi,Sym^2\otimes\chi)$. By the Langlands-Shahidi method, it can
be shown that the $L$-function $L(s,\pi,Sym^2\otimes\chi)$ admits
meromorphic continuation and a functional equation. 
(See \cite[Theorem 7.7]{Sh90}.)

The Langlands-Shahidi method, however, is unable to determine the
locations of the possible poles of $L(s,\pi,Sym^2\otimes\chi)$. The
main theme of this paper is to determine them
though we consider only the incomplete $L$-function
$L^S(s,\pi,Sym^2\otimes\chi)$. To be more specific, let $S$ be a
finite set of places that contains all the archimedean places and
non-archimedean places where $\pi$ or $\chi$ ramifies. For $v\notin
S$, each $\pi_v$ is parameterized by a set of $r$ complex numbers
$\{\alpha_{v,1},\dots,\alpha_{v,r}\}$ known as the Satake
parameters. Then we have
\[ 
L_v(s,\pi_v,Sym^2\otimes\chi_v)=\prod_{i\leq
j}\frac{1}{(1-\chi_v(\varpi_v)\alpha_{v,i}\alpha_{v,j}q_v^{-s})},
\] 
where $\varpi_v$ is the uniformizer of $F_v$ and $q_v$ is the order
of the residue field, and we set
\[ 
L^S(s,\pi,Sym^2\otimes\chi)=\prod_{v\notin
S}L_v(s,\pi_v,Sym^2\otimes\chi_v).
\] 
As our main theorem (Theorem \ref{T:main3}) we will prove

\begin{Thm*}
Let $\pi$ be a cuspidal
automorphic representation of $\GL_r(\A)$ with unitary central character
$\omega_\pi$ and $\chi$ a unitary Hecke character. Then the incomplete
twisted symmetric square $L$-function $L^S(s,\pi,Sym^2\otimes\chi)$ is
holomorphic everywhere except that it has a possible pole at $s=0$ and
$s=1$. Moreover there
is no pole if $\chi^r\omega_\pi^2\neq 1$. (Here the set $S$ can be
taken to be exactly the finite set
of places containing all the archimedean places, places dividing 2, and the
non-archimedean places where $\pi$ or $\chi$ is ramified.)
\end{Thm*}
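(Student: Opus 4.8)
The plan is to realize $L^S(s,\pi,Sym^2\otimes\chi)$ as (up to normalizing factors) an integral of a cusp form on $\GL_r(\A)$ against an Eisenstein series on the double cover $\GLt_r(\A)$, following the Bump--Ginzburg construction used in the author's earlier paper, and then to read off the poles of the $L$-function from the poles of that Eisenstein series. Concretely, I would first recall the Rankin--Selberg-type integral
\[
I(s,\varphi,\theta,E)=\int_{\GL_r(F)\backslash\GL_r(\A)^1}\varphi(g)\,\theta(g)\,E(g,s)\,dg,
\]
where $\varphi$ is in the space of $\pi$, $\theta$ is a suitable theta function on $\GLt_r(\A)$, and $E(g,s)$ is the metaplectic Eisenstein series on $\GLt_r(\A)$ built from the Borel (or a maximal parabolic) with inducing data twisted by $\chi$. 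The basic identity, established in the earlier paper, is that for a decomposable choice of data the integral unfolds to an Euler product whose value at almost all places computes $L_v(2s,\pi_v,Sym^2\otimes\chi_v)$ divided by factors coming from the constant term / normalization of $E$; the missing places contribute entire, nonvanishing functions that can be absorbed. Thus $L^S$ is, up to an entire nonvanishing factor and a change of variables in $s$, a quotient of $I(s,\varphi,\theta,E)$ by an explicit product of (completed) Hecke $L$-functions and abelian $L$-factors appearing in the normalization.

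Next I would analyze the analytic behavior. The cusp form $\varphi$ and the theta function $\theta$ are of moderate growth, $\theta$ in particular being (at worst) of polynomial growth, so the integral $I(s,\varphi,\theta,E)$ is absolutely convergent wherever $E(g,s)$ is, and it inherits its meromorphic continuation and poles from $E(g,s)$ — cuspidality of $\varphi$ kills the contributions near the cusps, so no extra poles are introduced and the only possible poles of $I$ are at poles of $E$. Therefore the key input is the precise determination of the poles of the metaplectic Eisenstein series $E(g,s)$, which is exactly the refinement carried out in the body of this paper (the analysis of the constant term along parabolics, the location of the poles of the intertwining operators and of the abelian $L$-functions in the constant term). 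Once one knows $E(g,s)$ has at most simple poles at the two relevant points — which after the change of variables $s\mapsto 2s$ (or $s\mapsto 2s-1$, depending on normalization) land at $s=0$ and $s=1$ for $L^S$ — one concludes that $L^S(s,\pi,Sym^2\otimes\chi)$ is holomorphic away from $\{0,1\}$ with at worst simple poles there. The normalizing $L$-functions divided out in passing from $I$ to $L^S$ are completed abelian Hecke $L$-functions, which are entire and nonvanishing in the relevant region (their only poles, for the trivial character, are cancelled or occur outside the region of interest), so dividing by them neither creates nor removes poles in the range we care about; one must check this compatibility carefully, together with the fact that the local factors at $v\in S$ contribute nothing spurious, and here I would invoke the nonvanishing results for local zeta integrals (choosing test data at bad places so that the local integral is a prescribed entire nonzero function).

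For the last assertion, that there is no pole unless $\chi^r\omega_\pi^2=1$: the residue of $E(g,s)$ at each of its poles is a (square-integrable) residual form whose constant term, and hence whose very existence, is governed by the poles of the abelian $L$-functions $L(s,\chi^j\omega_\pi^k\cdots)$ appearing in the constant-term formula; tracing through the exponents, the residue can be nonzero only when one of these becomes $L(s,\mathbf{1})$ — i.e.\ when the relevant character is trivial — and the character that shows up at the pole contributing to $Sym^2\otimes\chi$ is precisely $\chi^r\omega_\pi^2$. So when $\chi^r\omega_\pi^2\neq 1$ the residue of $E$ (and hence of $I$, and hence of $L^S$) vanishes at both candidate points. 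The main obstacle, I expect, is not any single estimate but the bookkeeping: matching the normalization of the Eisenstein series, the shift in the complex variable, and the abelian $L$-factors on the two sides so that ``simple pole of $E$'' translates exactly into ``simple pole of $L^S$ at $s=0,1$'' with the right triviality condition, and making sure the bad-place local integrals are genuinely harmless. This is where the careful constant-term computation for the metaplectic Eisenstein series done earlier in the paper is essential.
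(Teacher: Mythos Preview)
Your proposal is essentially the paper's own approach: reduce the analytic behavior of $L^S(s,\pi,Sym^2\otimes\chi)$ to that of the normalized metaplectic Eisenstein series via the Rankin--Selberg integral of \cite{Takeda1}, then invoke Theorem~\ref{T:main2} for the poles. A couple of points of precision: the Eisenstein series is induced from the $(r-1,1)$-parabolic $\Qt$ with inducing data the \emph{exceptional representation} $\theta_{\chi,\eta}$ or $\vartheta_{\chi,\eta}$ (not from the Borel with a character), the normalizing abelian $L$-factor $L^S(r(2s+\tfrac12),\chi^r\omega_\pi^2)$ is \emph{multiplied} rather than divided (so that one passes from $E$ to $E^\ast$ and applies Theorem~\ref{T:main2} directly), and the change of variable is $s\mapsto 2s+\tfrac12$, sending $s=\pm\tfrac14$ to $s=1,0$; otherwise your outline matches the proof of Theorem~\ref{T:main3}.
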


Indeed, in our previous work (\cite{Takeda1}), which is based on the
work by Bump and Ginzburg (\cite{BG}), which is in turn based on works
of various people such as Patterson and Piatetski-Shapiro (\cite{PP}),
Gelbart and Jacquet (\cite{GJ}) and most originally Shimura
(\cite{Shimura}), we showed the $L$-function $L^S(s,\pi,Sym^2\otimes\chi)$
is holomorphic for $\Re(s)>1$. (Actually what we showed in
\cite{Takeda1} is slightly more than this. See \cite{Takeda1} for more
details.) In \cite{Takeda1}, however, we were unable to show the
holomorphy for $\Re(s)<1$. This was because we were unable to
determine the locations of possible poles of certain Eisenstein series
on the metaplectic double cover $\GLt_r$ of $\GL_r$ for all $s\in\C$. For the sake of
explaining it, let us assume $r$ is odd here. Then in \cite{Takeda1},
the twisted symmetric square $L$-function
$L^S(s,\pi,Sym^2\otimes\chi)$ is represented by Rankin-Selberg
integrals of the form
\[
Z(\phi, \Theta,
f^s)=\int_{Z(\A)\GL_r(F)\backslash\GL_r(\A)}\phi(g)\theta(\kappa(g))E(\kappa(g),s;
f^s)\,dg,
\]
where $\phi$ is a cusp form in $\pi$, $\Theta$ is an automorphic form
on the twisted exceptional representation of $\GLt_r(\A)$, and
$E(-,s;f^s)$ is the Eisenstein series on $\GLt_r(\A)$ associated with
the section $f^s$ in the global induced space
$\Ind_{\Qt(\A)}^{\GLt_r(\A)}\theta\otimes\delta_Q^s$, where $Q$ is
the $(r-1,1)$-parabolic of $\GL_r$ and $\theta$ is the exceptional
representation of the Levi part
$\GLt_{r-1}(\A)\timest\GLt_1(\A)$. (Those exceptional representations
will be recalled in later sections.) Then the holomorphy of the
twisted symmetric square $L$-function can essentially be reduced to
the holomorphy of the normalized Eisenstein series
\[
E^{\ast}(-,s;f^s)=L^S(r(2s+\frac{1}{2})\chi^r\omega_\pi^2) E(-,s;f^s).
\]
Indeed, the bulk of this paper is devoted to showing the following
result on the normalized Eisenstein series, which is  Theorem
\ref{T:main2} with the notation adjusted. 
\begin{Thm*}
The normalized Eisenstein series above is holomorphic for all $s\in\C$
except that if $\chi^r\omega_\pi^2=1$ it has a possible simple pole at
$s=\frac{1}{4}$ and $-\frac{1}{4}$.
\end{Thm*}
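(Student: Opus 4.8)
The goal is to control the poles of the Eisenstein series $E(-,s;f^s)$ induced from $\Ind_{\Qt(\A)}^{\GLt_r(\A)}\theta\otimes\delta_Q^s$, where $Q$ is the $(r-1,1)$-parabolic and $\theta$ is an exceptional representation on the Levi. The natural tool is the theory of Langlands–Eisenstein series: for a maximal parabolic, $E(-,s;f^s)$ and its constant term are governed by a single intertwining operator $M(s)\colon \Ind_{\Qt(\A)}^{\GLt_r(\A)}\theta\otimes\delta_Q^s\to\Ind_{\iQt(\A)}^{\GLt_r(\A)}\,{}^\iota\theta\otimes\delta_{\iQ}^{-s}$, where $\iQ$ is the associate parabolic. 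So the first step is to write down the constant term of $E(-,s;f^s)$ along $Q$ and along $\iQ$, and observe that — since $Q$ is maximal and self-associate-up-to-the-Weyl-element $\iota$ — the poles of $E$ in a right half-plane are exactly the poles of $M(s)$ there, and then the functional equation $M(-s)\,{}^\iota M(s)=\mathrm{id}$ (suitably interpreted on the metaplectic cover, using the exceptional representation's properties) propagates the information to all of $\C$. Here one must be slightly careful: $\theta$ is not cuspidal, it is an exceptional (hence residual-type) representation of $\GLt_{r-1}\timest\GLt_1$, so the "constant term along other parabolics" bookkeeping requires using the known structure of $\theta$ under further constant terms, which is where the Bump–Ginzburg/Kazhdan–Patterson description of exceptional representations enters.

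**Key steps, in order.**
First, factor the global intertwining operator $M(s)=\prod_v M_v(s)$ into local operators and isolate the "bad" places (those in $S$) from the "good" ones. Second, at an unramified place $v\notin S$, compute $M_v(s)$ acting on the normalized spherical vector by the metaplectic Gindikin–Karpelevich formula: this produces a ratio of (partial) $L$-functions, and the key point — which is exactly why the normalization $L^S(r(2s+\tfrac12),\chi^r\omega_\pi^2)$ appears — is that the relevant numerator $L$-function for the double cover of $\GL_r$ is $L^S(2rs + \tfrac{r}{2}, \chi^r\omega_\pi^2)$ (a shifted Hecke $L$-function of $\chi^r\omega_\pi^2$), coming from the way the exceptional representation's Satake parameters pair against the coroot for $Q$. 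So $\prod_{v\notin S}M_v(s)$ has its poles precisely at the poles of that Hecke $L$-function, which by Tate/Hecke is entire unless $\chi^r\omega_\pi^2=1$, in which case it has simple poles at the two points where the shifted argument equals $0$ and $1$ — i.e., $2rs+\tfrac r2\in\{0,1\}$, giving $s=-\tfrac14$ and $s=\tfrac14-\tfrac{1}{2r}$; one then checks the arithmetic reconciles with the claimed $s=\pm\tfrac14$ after the precise normalization of $\delta_Q$ and the half-integral shift intrinsic to the metaplectic induction (this bookkeeping, matching conventions with \cite{Takeda1}, is a place to be careful but not deep). Third, at the bad places $v\in S$, show the local operators $M_v(s)$, and more importantly the local sections $f_v^s$, can be chosen so that $M_v(s)f_v^s$ is holomorphic and nonvanishing in the relevant region — i.e., the local intertwining operators contribute no extra poles after a suitable choice of data; this is a standard but nontrivial local argument (meromorphic continuation of local intertwining operators for the metaplectic cover, plus the fact that one is inducing from an irreducible representation $\theta_v$ so Bernstein's continuation principle applies). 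Fourth, assemble: after normalization, $E^\ast(-,s;f^s)$ has possible poles only where the normalized $M(s)$ does on the half-plane $\Re(s)\geq 0$, which is at most $s=\tfrac14$ (and by the functional equation, $s=-\tfrac14$), each simple, and only when $\chi^r\omega_\pi^2=1$; holomorphy everywhere else follows because a Langlands–Eisenstein series attached to a maximal parabolic has no poles outside the finitely many coming from the constant term's $L$-factor ratios.

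**Main obstacle.**
The genuinely hard part is \emph{not} the unramified $L$-function computation (that is essentially Gindikin–Karpelevich on the cover) but controlling the \emph{poles coming from the non-cuspidality of $\theta$} — that is, ruling out "spurious" poles of $E(-,s;f^s)$ that could arise because the inducing data $\theta$ is itself a residue of an Eisenstein series, so that $E(-,s;f^s)$ is morally an iterated/residual Eisenstein series whose singularities are not captured by the single operator $M(s)$ alone. Concretely, one must analyze the constant term of $E(-,s;f^s)$ along the \emph{Borel} (or along the minimal parabolic refining $Q$), expand it as a sum over the full Weyl group using the cocycle relations for metaplectic intertwining operators, and verify that the only surviving singular hyperplanes in the region of interest are the one(s) already identified from the Hecke $L$-function of $\chi^r\omega_\pi^2$; all other potential poles must cancel, using the precise realization of the exceptional representation $\theta$ (in particular that its own "Eisenstein origin" is at a point that, after the shift by $s$, stays away from $\Re(s)\geq 0$ except at the claimed points). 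Managing this Weyl-sum cancellation cleanly — rather than by brute force — is where the argument will need the most care, and it is presumably why the bulk of the paper is devoted to this single theorem.
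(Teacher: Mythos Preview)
Your broad architecture---constant term along $Q$, Gindikin--Karpelevich at unramified places, local holomorphy at bad places, then the functional equation---is indeed what the paper does, and you correctly flag that the non-cuspidality of $\theta$ is where the work lies. But there are two genuine gaps.

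First, your arithmetic is off. The Gindikin--Karpelevich formula at $v\notin S$ gives
\[
A_v(s,\theta,w_1)f_{0,v}^s=\frac{L_v\big(r(2s+\tfrac12)-r+1,\,\chi^2\eta^{-2}\big)}{L_v\big(r(2s+\tfrac12),\,\chi^2\eta^{-2}\big)}f_{0,v}^{-s}
\]
(and $\chi\eta^{-2}$ for $r$ odd). The normalizing factor $L^S(r(2s+\tfrac12),\cdot)$ is the \emph{denominator}, so after normalization the relevant $L$-function is the \emph{numerator} $L\big(r(2s+\tfrac12)-r+1,\cdot\big)$, whose poles (when the character is trivial) sit at $s=\tfrac14$ and $s=\tfrac14-\tfrac{1}{2r}$---not at $s=-\tfrac14$. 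Your claim that ``the arithmetic reconciles with $s=\pm\tfrac14$ after bookkeeping'' is therefore wrong: there is an honest extra pole at $s_0=\tfrac14-\tfrac{1}{2r}$ in the intertwining operator for $\Re(s)\geq 0$, and eliminating it from the Eisenstein series is not a normalization issue---it is the crux of the argument.

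Second, and more seriously, your plan for that elimination is too vague. The paper's mechanism is concrete and you are missing both of its ingredients. One computes the constant term along the $(1,r-1)$-parabolic (or $(2,r-2)$ for $r$ odd) and splits it into an identity term and a $w_1$-term. The $w_1$-term \emph{is} the intertwining operator and carries the poles at $\tfrac14$ and $s_0$. The identity term, after taking the further constant term of $\theta$ along $N_{P'}$, becomes (via the metaplectic tensor product machinery and restriction to the smaller Levi) an Eisenstein series on $\GLt_{r-1}(\A)$ of the same type, at the shifted parameter $b=(rs+\tfrac14)/(r-1)$; by induction on $r$ this has its only possible pole at $b=\tfrac14$, i.e.\ at $s=s_0$. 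So \emph{both} terms have a potential simple pole at $s_0$, and the content is that these residues cancel. The paper proves this cancellation by computing the constant term along the Borel, writing it as $\sum_{i=0}^{r-1}c_i(g,s;f^s)$, showing $c_i$ is holomorphic at $s_0$ for $i<r-2$, and reducing $c_{r-1}+c_{r-2}$ to a $\GLt_2$ computation: one realizes each $c_i$ as $M_2(t)$ applied to $c_{i-1}$ (for the rank-one intertwining operator $M_2$ on $\GLt_2$), and the cancellation at $s_0$ comes down to the single fact that $M_2(0)=-\Id$ on the relevant $\GLt_2$ principal series (this is the paper's Proposition on the $r=2$ case, quoting \cite[Proposition 7.3]{BG}). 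Without this $\GLt_2$ reduction and the $-\Id$ identity, there is no mechanism for the residues to cancel, and your ``Weyl-sum cancellation'' remains a hope rather than an argument. Once holomorphy for $\Re(s)\geq 0$ (except at $\tfrac14$) is established, the functional equation with the opposite Eisenstein series on the $(1,r-1)$-parabolic gives the statement for $\Re(s)<0$, exactly as you anticipate.
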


Let us note that the possible pole at $s=\frac{1}{4}$
(resp. $s=-\frac{1}{4}$) for the normalized Eisenstein series gives
the one at $s=1$ (resp. $s=0$) for the $L$-function. 

\quad\\

Determination of the
location of possible poles of  (normalized) Eisenstein series
(especially degenerate Eisenstein series for classical groups) has
been done in various places such as \cite{PSR, GPSR, KR, Ikeda,
Jiang}, and we essentially follow their approach, in which we
determine possible poles of the Eisenstein series by computing
a constant term of the Eisenstein series and poles of intertwining
operators. Our Eisenstein series, however, is on the metaplectic group
$\GLt_r(\A)$, which requires extra care, and for this reason we have
developed the theory of metaplectic tensor products for automorphic
representations in our earlier paper \cite{Takeda2}.

Even though the theory of metaplectic groups is an important
subject in representation theory and automorphic forms, it has an
unfortunate history of numerous technical errors and as a result published
literatures in this area are often marred by those
errors which compromise their reliability. For this reason, we try to
make this paper as self-contained as possible and supply as detailed
proofs as possible. In particular, we will not use any of the results
in \cite{BG} (though many of the ideas in this paper are borrowed from \cite{BG})
except one proposition (\cite[Proposition 7.3]{BG}) on $\GLt_2$ for
which the proof there is detailed enough to be reliable.\\

The following is the structure of the paper. In the next section, we
will recall the theory of the metaplectic double cover $\GLt_r$ of
$\GL_r$ both locally and globally and quote the results from
\cite{Takeda2} on the metaplectic tensor product, which will be needed
in later sections. In Section 3, we recall the notion of the
exceptional representation on $\GLt_r$, which was originally developed
in \cite{KP} for the non-twisted case, \cite{Banks} for the local
twisted case, and finally in \cite{Takeda1} for the general case. The
exceptional representation is used to define our Eisenstein
series. In Section 4, we define the induced representation that gives
rise to our Eisenstein series, and examine analytic properties of the
intertwining operators on it, and in Section 5 we will determine the
possible poles of the (unnormalized) Eisenstein series for $\Re(s)\geq
0$. Those two sections comprise the main part of the paper. Then in
Section 6, we will determine the possible poles of the normalized
Eisenstein series. Finally in Section 7, we will give the main theorem
on the twisted symmetric square $L$-function.

\quad\\

\begin{center}{\bf Notations}\end{center}

Throughout the paper, $F$ is a local field of characteristic zero or
a number field. If $F$ is a number field, we denote the ring of adeles
by $\A$. Both locally and globally, we denote by $\OF$ the ring of integers of
$F$. For each algebraic group $G$ over a global $F$, and $g\in G(\A)$,
by $g_v$ we mean the $v^{th}$ component of $g$, and so $g_v\in
G(F_v)$.

If $F$ is local, the symbol $(-,-)_F$ denotes the Hilbert
symbol of $F$. If $F$ is global, we let $(-,-)_\A:=\prod_v(-,-)_{F_v}$, where the
product is finite. We sometimes write simply $(-,-)$ for the Hilbert
symbol when there is no danger of confusion.

Throughout the paper we write
\[
r=\begin{cases}2q\\2q+1\end{cases}
\]
depending on the parity of $r$. For a partition $r_1+\cdots+r_k=r$ of $r$, we let
\[
M=\GL_{r_1}\times\cdots\times\GL_{r_k}\subseteq\GL_r
\]
and assume it is embedded diagonally as usual.
Let $P=P_1\times\cdots\times P_k$ be a parabolic
subgroup of $M$ where each $P_i$ is a parabolic subgroup of
$\GL_{r_i}$. Further assume that the Levi factor of $P_i$ is
$\GL_{l^i_1}\times\cdots\times\GL_{l^i_{m_i}}$, where
$l_1^i+\cdots+l_{m_i}^i=r_i$. Then we write
\[
P=P^{r_1,\dots,r_k}_{l^1_1,\dots,l^i_{m_1},\dots,l^k_1,\dots,l^k_{m_k}}
\]
namely the superscript indicates the ambient group $M$, and the
subscript indicates the Levi part. For example, $P^{2, r-3, 1}_{1,1,
  r-3, 1}$ means the parabolic subgroup of
$\GL_2\times\GL_{r-3}\times\GL_1$ whose Levi part is
$\GL_1\times\GL_1\times\GL_{r-3}\times\GL_1$. For the
minimal parabolic of $M$, we write $B^{r_1,\dots,r_k}$, namely
$B^{r_1,\dots,r_k}=P_{1,\dots,1}^{r_1,\dots,r_k}$. Also if $M=\GL_r$,
we usually omit the superscript and simply write $P_{l_1,\dots,l_m}$ for the
$(l_1,\dots,l_m)$-parabolic of $\GL_r$. In particular $B$ denotes the
Borel subgroup of $\GL_r$. For a parabolic subgroup $P$, we denote its
Levi part by $M_P$ and unipotent radical by $N_P$. We use the same
convention for the subscripts and superscripts for the unipotent
radical. For example, $N_{1,1,r-2}^{2,r-2}$ denotes the unipotent
radical of the parabolic $P_{1,1,r-2}^{2,r-2}$.

For any group $G$ and subgroup $H\subseteq G$, and for each $g\in G$,
we define $^gH=gHg^{-1}$. Then for a representation $\pi$ of $H$, we
define $^g\pi$ to be the representation of $^gH$ defined by
$^g\pi(h')=\pi(g^{-1}h'g)$ for $h'\in gHg^{-1}$.

For each $r$, we denote the $r\times r$ identity matrix by $I_r$. We
let $W$ be the set of all $r\times r$ permutation
matrices, so for each element $w\in W$ each row and each column has
exactly one 1 and  all the other entries are 0. The Weyl group of
$\GL_r$ is identified with $W$. Also for a Levi
$M=\GL_{r_1}\times\cdots\times\GL_{r_k}$, we let
$W_M$ be the subset of $W$ that only permutes the
$\GL_{r_i}$-blocks of $M$. Namely  $W_M$ is the collection of block
matrices 
\[
W_M:=\{(\delta_{\sigma(i),j}I_{r_j})\in W : \sigma\in S_k\},
\]
where $S_k$ is the permutation group of $k$ letters. Though $W_M$
is not a group in general, it is in bijection with $S_k$. Accordingly
we sometimes use the permutation notation for the Weyl group element. For example,
$(12\dots k)\in S_k$ corresponds to the longest element in $W_M$.

We use the usual notation for the roots of $\GL_r$. Namely $e_i$ is
the character on the maximum torus defined by $(t_1,\cdots,t_r)\mapsto
t_i$. Then each root is of the form $e_i-e_j$ and each positive root is
of the form $e_i-e_j$ with $i<j$. Let $P=MN$ be a parabolic subgroup
whose Levi is $M$. We let $\Phi_{P}(\C)$ be the
$\C$-vector space spanned by the roots of $M$. So in particular if
$M=\GL_r$, then $\Phi_P(\C)\cong\C^{r-1}$ and each $\nu\in \Phi_P(\C)$
is for the form $\nu=s_1e_1+\cdots+s_re_r$ with $s_1+\cdots+s_r=0$. We
let $\rho_P$ be half the sum of the positive roots of $M$. 

\quad

\begin{center}{\bf Acknowledgements}\end{center}

The author is partially supported by NSF grant DMS-1215419. He
would like to thank the referee for his/her careful reading of the
manuscript. 

\quad


\section{\bf The metaplectic double cover $\GLt_r$ of 
$\GL_r$}\label{S:metaplectic_cover}


In this section, we review the theory of the metaplectic double cover $\GLt_r$ of
$\GL_r$ for both local and global cases, which was originally
constructed by Kazhdan and Patterson in \cite{KP} and the metaplectic
tensor product for the Levi part developed by Mezo (\cite{Mezo}) and the
author (\cite{Takeda2}).


\subsection{\bf The local metaplectic double cover $\GLt_r(F)$}


Let $F$ be a (not necessarily non-archimedean) local field of
characteristic $0$. In this paper, by the metaplectic double cover
$\GLt_r(F)$ of $\GL_r(F)$, we mean the central extension of $\GL_r(F)$ by
$\{\pm1\}$ as constructed in \cite{KP} by Kazhdan and
Patterson. (Kazhdan and Patterson considered more general $n^\Th$ covers
$\GLt_r^{(c)}(F)$ with a twist by $c\in\{0,\dots,n-1\}$. But we only consider
the non-twisted double cover, \ie $n=2$ and $c=0$.) Later, Banks, Levy, and Sepanski
(\cite{BLS}) gave an explicit description of a 2-cocycle 
\[
\sigma_r:\GL_r(F)\times\GL_r(F)\rightarrow\{\pm 1\}
\]
which defines $\GLt_r(F)$ and shows that their 2-cocycle is
``block-compatible'', by which we mean the
following property of $\sigma_r$:  For the standard
$(r_1,\dots,r_k)$-parabolic $P$ of $\GL_r$, so that its Levi $M_P$ is
of the form
$\GL_{r_1}\times\cdots\times\GL_{r_k}$ which is embedded diagonally into
$\GL_r$, we have
\begin{equation}\label{E:compatibility}
\sigma_r(\begin{pmatrix}g_1&&\\ &\ddots&\\ &&g_k\end{pmatrix},
\begin{pmatrix}g'_1&&\\ &\ddots&\\ &&g'_k\end{pmatrix})
=\prod_{i=1}^k\sigma_{r_i}(g_i,g_i')\prod_{1\leq i<j\leq k}(\det(g_i), \det(g_j'))_F,
 \end{equation}
for all $g_i, g_i'\in\GL_{r_i}(F)$ (see \cite[Theorem 11, \S3]{BLS}), where
$(-,-)_F$ is the Hilbert symbol for $F$.
The 2-cocycle of \cite{BLS} generalizes the
well-known cocycle given by Kubota \cite{Kubota} for the case
$r=2$. Note that $\GLt_r(F)$ is not the $F$-rational points of an
algebraic group, but this notation seems to be standard. 

We define $\sigGLt_r(F)$ to be the group whose underlying set is
\[ 
\sigGLt_r(F) =\GL_r(F)\times\{\pm 1\}=\{(g,\xi):g\in\GL_r(F), \xi\in\{\pm
1\}\},
\] 
and the group law is defined by
\[
(g_1,\xi_1)\cdot (g_2,\xi_2)=(g_1g_2,\sigma_r(g_1,g_2)\xi_1\xi_2).
\]
Since we would like to emphasize the cocycle being used, we write
$\sigGLt_r(F)$ instead of $\GLt_r(F)$.

To use the block-compatible 2-cocycle of \cite{BLS} has obvious
advantages. In particular, it can be explicitly computed and, of course, it is
block-compatible. However it does not allow us to construct the global
metaplectic cover $\GLt_r(\A)$. Namely one
cannot define an adelic block-combatible 2-cocycle simply by taking
the product of the local block-combatible 2-cocycles over all the
places. This can be already observed for the case $r=2$. (See
\cite[p.125]{F}.)

For this reason, we will use a different 2-cocycle $\tau_r$ which
works nicely with the global metaplectic cover $\GLt_r(\A)$. To construct such
$\tau_r$, first assume $F$ is non-archimedean. It is known that an
open compact subgroup $K$
splits in $\GLt_r(F)$, and moreover if the residue characteristic of $F$ is odd,
$K=\GL_r(\OF)$. (See \cite[Proposition 0.1.2]{KP}.) Also for
$k,k'\in K$, we have $(\det(k),\det(k'))_F=1$.
Hence one has a
continuous map $s_r:\GL_r(F)\rightarrow\{\pm1\}$  such that
$\sigma_r(g,g')s_r(g)s_r(g')=s_r(gg')$ for all $g, g'\in
K$. Then define our 2-cocycle $\tau_r$ by
\begin{equation}\label{E:tau_sigma}
\tau_r(g, g'):=\sigma_r(g, g')s_r(g)s_r(g')/s_r(gg')
\end{equation}
for $g, g'\in\GL_r(F)$. If $F$ is archimedean, we set
$\tau_r=\sigma_r$. 

The choice of $s_r$ and hence $\tau_r$ is not unique. But there is a canonical choice with
respect to the splitting of $K$ in the sense explained in
\cite{Takeda2}. With this choice of $s_r$, the section
$K\rightarrow\sigGLt_r(F)$ defined by $k\mapsto (k,s_r(k))$ is what is
called the canonical lift in \cite{KP} which is denoted by $\kappa^\ast$
there. Also if $r=2$, our choice of $\tau_2$ is equal to the cocycle
denoted by $\beta$ in \cite{F}, which can be shown to be block
compatible. Indeed, the restriction of $\tau_2$ to $B^2\times B^2$
where $B^2$ is the Borel subgroup of $\GL_2$ coincides with $\sigma_2$.

Using $\tau_r$, we realize $\GLt_r(F)$ to be
\[ 
\GLt_r(F)=\GL_r(F)\times\{\pm 1\},
\] 
as a set and the group law is given by
\[ 
(g,\xi)\cdot(g',\xi')=(gg', \tau_r(g,g')\xi\xi').
\] 
Note that we have the exact sequence
\[
\xymatrix{
0\ar[r]&\{\pm1\}\ar[r]&\GLt_r(F)\ar[r]^{p}&\GL_r(F)\ar[r]&
0
}
\]
given by the obvious maps, where we call $p$ the canonical projection.

We define a set theoretic section
\[ 
\kappa:\GL_r(F)\rightarrow\GLt_r(F),\; g\mapsto (g,1).
\] 
Note that $\kappa$ is not a homomorphism. But by our
construction of the
cocycle $\tau_r$, $\kappa|_K$ is a homomorphism if $F$ is
non-archimedean and $K$ is a sufficiently small open compact subgroup,
and moreover if the residue characteristic is odd, one has
$K=\GL_r(\OF)$.

Also we define another set theoretic section
\[
\s:\GL_r(F)\rightarrow\GLt_r(F),\; g\mapsto (g,s_r(g)^{-1})
\]
where $s_r(g)$ is as above. We have the isomorphism
\[
\GLt_r(F)\rightarrow\sigGLt_r(F),\quad (g,\xi)\mapsto (g,s_r(g)\xi), 
\]
which gives rise to the commutative diagram
\[
\xymatrix{
\GLt_r(F)\ar[rr]&&\sigGLt_r(F)\\
&\GL_r(F)\ar[ul]^{\s}\ar[ur]_{g\mapsto (g,1)}&
}
\]
of set theoretic maps, \ie maps which are not necessarily homomorphisms. Also note
that the elements in the image $\s(\GL_r(F))$ ``multiply via
$\sigma_r$'' in the sense that for $g,g'\in\GL_r(F)$, we have
\begin{equation}\label{E:convenient}
(g,s_r(g)^{-1}) (g',s_r(g')^{-1})=(gg', \sigma_r(g,g')s_r(gg')^{-1}).
\end{equation}

For a subgroup
$H\subseteq\GL_r(F)$, whenever the cocycle $\sigma_r$ is trivial on
$H\times H$, the section $\s$ splits 
$H$ by (\ref{E:convenient}). We often denote the image $\s(H)$ by
$H^\ast$ or sometimes simply
by $H$ when it is clear from the context. Particularly important is
that by \cite[Theorem 7 (f), \S3]{BLS}, $\s$ splits $N_B$, the unipotent radical of the Borel
subgroup $B$ of $\GL_r(F)$, and
accordingly we denote $\s(N_B)$ by $N_B^\ast$. 

Assume $F$ is non-archimedean of odd residue characteristic. By
\cite[Proposition 0.I.3]{KP} we have
\begin{equation}\label{E:kappa_and_s}
\kappa|_{T\cap K}=\s|_{T\cap K},\quad \kappa|_{W}=\s|_{W},\quad
\kappa|_{N_B\cap K}=\s|_{N_B\cap K},
\end{equation}
where $W$ is the Weyl group and $K=\GL_r(\OF)$. 
In particular, this implies $s_r|_{T\cap K}=s_r|_{W}=s_r|_{N_B\cap
  K}=1$. Also note that $s_r(1)=1$. 

Now assume $F$ is any local field $F$. For each element $w\in W$, we
denote $\s(w)$ by $w$, which is equal to $(w, 1)$ if the residue
characteristic of $F$ is odd or $F=\C$, when it
 is clear from the context. However it is important to note that
 $\s$ does not split $W$ if the residue characteristic of $F$ is
 even or $F=\R$. Indeed, $\s$ splits $W$ if and only if $(-1,-1)_F=1$.

Note that $\GLt_1=\GL_1(F)\times\{\pm 1\}$, where the product is the
direct product, \ie $\sigma_1$ is trivial. (See \cite[Corollary
8, \S3]{BLS}.) Also we define $\widetilde{F^\times}$ to be
$\widetilde{F^\times}=F^\times\times\{\pm 1\}$ as a set but the
product is given by $(a,\xi)\cdot(a',\xi')=(aa',
(a,a')_F\xi\xi')$. (It is known that $\widetilde{F^\times}$ is
isomorphic to $\GLt_1$ if and only if $(-1,-1)_F=1$. It is our
understanding that this is due to J. Klose (see \cite[p.42]{KP}), though we
do not know where his proof is written. See
\cite{Adams} for a proof for a more general statement.)

For each subgroup $H(F)\subseteq\GL_r(F)$, we denote the preimage
$p^{-1}(H(F))$ of $H(F)$ via the canonical projection $p$ by
$\widetilde{H}(F)$  or sometimes simply by
$\widetilde{H}$ when the base field is clear from the context. We call
it the ``metaplectic preimage'' of $H(F)$. 

If $P$ is a parabolic subgroup of $\GL_r$ whose Levi is
$M_P=\GL_{r_1}\times\cdots\times\GL_{r_k}$, we often write
\[ 
\MPt=\GLt_{r_1}\timest\cdots\timest\GLt_{r_k}
\] 
for the metaplectic preimage of $M_P$. One can check
\[
\Pt=\MPt N_P^\ast
\]
and $N_P^\ast$ is normalized by $\MPt$. Hence if $\pi$
is a representation of $\MPt$, one can consider the parabolically induced
representation $\Ind_{\MPt N_P^\ast}^{\GLt_r}\pi$ as usual by letting
$N_P^\ast$ act trivially. 

Next let
\[ 
\GL_r^{(2)}=\{g\in\GL_r:\det g\in  F^{\times 2}\}, 
\] 
and $\GLtt_r$ its metaplectic preimage. Also we define
\[ 
M_P^{(2)}=\{\begin{pmatrix}g_1&&\\ &\ddots&\\ &&g_k\end{pmatrix}
\in M_P: \det g_i\in  F^{\times 2}\}
\] 
and often denote its preimage by
\[ 
\MPtt=\GLtt_{r_1}\timest\cdots\timest\GLtt_{r_k}.
\] 
We write $P^{(2)}=M_P^{(2)}N_P$ and denote its preimage by
$\Ptt$. Then we have
\[
\Ptt=\MPtt N_P^\ast.
\]

\quad

Let us mention the following important fact. Let $Z_{\GL_r}\subseteq\GL_r$ be
the center of $\GL_r$. Then the preimage $\widetilde{Z_{\GL_r}}$,
though abelian, is not the center of
$\GLt_r$ in general. It is the center only when $r=2q+1$ or
$F=\C$. If $r=2q$, the center $Z_{\GLt_r}$ is 
\[
    Z_{\GLt_r}=\{(aI_r, \xi):a\in  F^{\times 2}, \xi\in\{\pm1\}\}.
\]
From
(\ref{E:compatibility}), one can compute
\[
\sigma_r(aI_r, a'I_r)=\prod_{1\leq i<j\leq r}(a,a')_F=(a,a')_F^{\frac{1}{2}r(r-1)}.
\]
Hence for either $r=2q$
or $r=2q+1$, $\widetilde{Z_{\GL_r}}$ is isomorphic to
$\widetilde{F^\times}$ if $q$ is odd, and isomorphic to $\GLt_1$ if
$q$ is even. Also note that for $r=2q$ we have $\widetilde{Z_{\GL_r}}\subset\GLtt_r$ and
it is the center of $\GLtt_r$.

\quad

Let $\pi$ be an admissible representation of a subgroup
$\widetilde{H}\subseteq \GLt_r$. We say $\pi$ is ``genuine'' if each
element $(1,\xi)\in\widetilde{H}$ acts as multiplication by $\xi$. On
the other hand, if $\pi$ is a
representation of $H$, one can always view it as a (non-genuine)
representation of $\widetilde{H}$ by pulling back $\pi$ via the
canonical projection $\widetilde{H}\rightarrow H$, which we denote by the
same symbol $\pi$. In particular, for a parabolic subgroup $P$, we
view the modular character $\delta_P$ as a character on $\Pt$ in this way.


\subsection{\bf The global metaplectic double cover $\GLt_r(\A)$}\label{S:group}


In this subsection we consider the global metaplectic group. So we let
$F$ be a number field and $\A$ the ring of adeles. We shall define the
2-fold metaplectic cover $\GLt_r(\A)$ of $\GL_r(\A)$. (Just like the
local case, we write $\GLt_r(\A)$ even though it is not the adelic
points of an algebraic group.) The construction of $\GLt_r(\A)$ has
been done in various places such as \cite{KP, FK}. 

First define the adelic 2-cocycle $\tau_r$ by
\[
    \tau_r(g,g'):=\prod_v\tau_{r,v}({g}_v,g'_v),
\]
for $g, g'\in\GL_r(\A)$, where $\tau_{r,v}$ is the local
cocycle defined in the previous subsection. By definition of $\tau_{r,v}$, we
have $\tau_{r,v}({g}_v,g'_v)=1$ for almost all $v$, and hence the
product is well-defined. 

We define $\GLt_r(\A)$ to be the group whose underlying set is
$\GL_r(\A)\times\{\pm 1\}$ and the group structure is defined as in the
local case, \ie
\[
    (g,\xi)\cdot(g',\xi')=(gg', \tau_r(g,g')\xi\xi'),
\]
for $g, g'\in\GL_r(\A)$, and $\xi,\xi'\in\{\pm 1\}$.  Just as the local case, we have
\[
\xymatrix{
0\ar[r]&\{\pm1\}\ar[r]&\GLt_r(\A)\ar[r]^{p}&\GL_r(\A)\ar[r]&0,
}
\]
where we call $p$ the canonical projection. Define a set theoretic section
$\kappa:\GL_r(\A)\rightarrow\GLt_r(\A)$ by
$g\mapsto(g,1)$. 

It is well-known that $\GL_r(F)$ splits in $\GLt_r(\A)$. However the
splitting is not via $\kappa$ but via the product of all the local
sections $\s_{r,v}$. Namely one can define the map
\[
\s:\GL_r(F)\rightarrow\GLt_r(\A),\quad g\mapsto (g, s_r(g)^{-1}),
\]
where
\[
s_r(g):=\prod_vs_{r,v}(g)
\]
makes sense for all $g\in\GL_r(F)$ and the splitting is implied by the
``product formula'' for the block-compatible 2-cocycle.

Unfortunately, however, the expression $\prod_vs_{r,v}(g_v)$ does
not make sense for every $g=\prod_vg_v\in\GL_r(\A)$ because one does not know
whether $s_{r,v}(g_v)=1$ for almsot all $v$. But whenever the product
$\prod_vs_{r,v}(g_v)$ makes sense we denote the element $(g,
\prod_vs_{r,v}(g_v)^{-1})$ by $\s(g)$. This defines a partial global section
$\s:\GL_r(\A)\rightarrow\GLt_r(\A)$. 
It is shown in \cite{Takeda2} that the section $\s$ is defined
and splits the groups $\GL_r(F)$ and $N_B(\A)$. Also
$\s$ is defined, though not a homomorphism, on $B(\A)$ thanks to
(\ref{E:kappa_and_s}). And the following will be used later
\begin{Lem}\label{L:section_s}
For $g\in\GL_r(F)$ and $n,n'\in N_B(\A)$, the section $\s$ is
defined on $ngn'$, and moreover we have
\[
\s(ngn')=\s(n)\s(g)\s(n').
\]
\end{Lem}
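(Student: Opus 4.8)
The plan is to reduce the statement to cocycle identities involving $\sigma_r$, $\tau_r$, and $s_r$, and then verify those on the unipotent radical $N_B$ using the known splitting properties from \cite{BLS} and the compatibility of $\s$ on the Borel.

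First I would recall what needs to be checked. The section $\s$ sends $h \mapsto (h, s_r(h)^{-1})$, where globally $s_r(h) = \prod_v s_{r,v}(h_v)$ whenever this makes sense. For $g \in \GL_r(F)$ and $n, n' \in N_B(\A)$, the product $ngn'$ lies in $B(\A)$ (indeed $g$ normalizes $N_B$, so $ngn' = g\cdot(g^{-1}ng)n'$ with $(g^{-1}ng)n' \in N_B(\A)$), and by the remarks preceding the lemma $\s$ is defined on all of $B(\A)$, so the left-hand side makes sense; the right-hand side makes sense because $\s$ is defined and splits both $\GL_r(F)$ and $N_B(\A)$. So the content is the \emph{equality}. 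Writing everything out in $\GLt_r(\A) = \GL_r(\A)\times\{\pm1\}$, the claim $\s(ngn') = \s(n)\s(g)\s(n')$ unwinds (after two applications of the group law and cancelling the $\GL_r(\A)$-components, which agree automatically) to the identity
\[
s_r(ngn') = \tau_r(n, g)\,\tau_r(ng, n')\, s_r(n)\, s_r(g)\, s_r(n').
\]
Since $\tau_r = \prod_v \tau_{r,v}$ and $s_r = \prod_v s_{r,v}$, it suffices to prove the corresponding local identity at every place $v$:
\[
s_{r,v}(n_v g n'_v) = \tau_{r,v}(n_v, g)\, \tau_{r,v}(n_v g, n'_v)\, s_{r,v}(n_v)\, s_{r,v}(g)\, s_{r,v}(n'_v),
\]
for $g \in \GL_r(F) \subseteq \GL_r(F_v)$ and $n_v, n'_v \in N_B(F_v)$.

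Next I would translate the local identity back into a statement about $\sigma_r$ via the defining relation $\tau_{r,v}(x,y) = \sigma_{r,v}(x,y) s_{r,v}(x) s_{r,v}(y)/s_{r,v}(xy)$ (for non-archimedean $v$; for archimedean $v$ one has $\tau_{r,v} = \sigma_{r,v}$ and $s_{r,v} = 1$, so the identity is just the cocycle relation restricted appropriately). Substituting and cancelling the $s_{r,v}$ factors, the identity collapses to
\[
\sigma_{r,v}(n_v, g)\, \sigma_{r,v}(n_v g, n'_v) = 1,
\]
which is precisely a cocycle statement. Now I invoke \cite[Theorem 7(f), \S3]{BLS}: $\s$ splits $N_B$, i.e. the elements $\s(N_B)$ multiply via $\sigma_r$ trivially, which gives $\sigma_{r,v}(n_v, n'') = 1$ for $n_v, n'' \in N_B(F_v)$; and the remark that $\s$ is defined and behaves well on $B(F_v)$ (via \eqref{E:kappa_and_s}) together with block-compatibility controls $\sigma_{r,v}$ on products involving elements that conjugate $N_B$ into itself. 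Concretely, one writes $n_v g = g \cdot n_v''$ with $n_v'' = g^{-1} n_v g \in N_B(F_v)$ and uses the cocycle relation plus the known triviality of $\sigma_{r,v}$ on $N_B \times \GL_r$-type products inside the Borel; the associativity of the cocycle reduces $\sigma_{r,v}(n_v, g)\sigma_{r,v}(n_v g, n'_v)$ to $\sigma_{r,v}(g, n_v'')\sigma_{r,v}(g n_v'', n'_v)$, and then to a product of Hilbert symbols $(\det(\cdot), \det(\cdot))_{F_v}$ each of which is trivial because $N_B$ has trivial determinant.

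\textbf{The main obstacle} I anticipate is bookkeeping rather than conceptual difficulty: carefully tracking which version of the section ($\kappa$, $\s$, the canonical lift) is being used where, and making sure the "product over $v$" manipulations are legitimate — i.e. that $s_{r,v}(n_v) = s_{r,v}(g) = s_{r,v}(n'_v) = 1$ for almost all $v$ so that all the infinite products in the global identity actually converge and can be split place-by-place. The cleanest route is probably: (i) observe $\s|_{N_B(\A)}$ is a homomorphism so $\s(n)\s(g)\s(n') = \s(n)\,\s(g)\,\s(n')$ with $\s(g)$ acting by conjugation sending $\s(N_B(\A))$ to itself (this needs that $\s(g)\s(n)\s(g)^{-1} = \s(gng^{-1})$ for $g\in\GL_r(F)$, $n\in N_B(\A)$, which follows from the splitting of $N_B(F)\rtimes$-type subgroups in \cite{BLS}); (ii) then $\s(n)\s(g)\s(n') = \s(g)\cdot\s(g^{-1}ng)\s(n') = \s(g)\,\s(g^{-1}ngn')$, and finally (iii) compare with $\s(ngn') = \s(g\cdot g^{-1}ngn')$ using that $\s$ restricted to $B(\A)$ satisfies $\s(bn'') = \s(b)\s(n'')$ for $b \in B(\A)$, $n'' \in N_B(\A)$ — which is exactly the kind of semidirect-product compatibility guaranteed by \eqref{E:kappa_and_s} and the splitting of $N_B$. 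That last step (iii) is where I'd expect to spend the most care, since it requires knowing $\s$ is multiplicative against $N_B(\A)$ on the left argument even when the left factor is a general element of $B(\A)$, not just of $\GL_r(F)$.
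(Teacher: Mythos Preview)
There is a genuine gap. You repeatedly assume that an arbitrary $g\in\GL_r(F)$ normalizes $N_B$: you write ``$ngn' = g\cdot(g^{-1}ng)n'$ with $(g^{-1}ng)n' \in N_B(\A)$'' and later ``$n_v g = g \cdot n_v''$ with $n_v'' = g^{-1} n_v g \in N_B(F_v)$''. This is false. The hypothesis places no restriction on $g$ beyond rationality; think of $g$ a nontrivial Weyl element, or any element outside the Borel. Consequently $ngn'$ does \emph{not} lie in $B(\A)$ in general, so your argument that $\s(ngn')$ is defined collapses, and your entire ``cleanest route'' (steps (i)--(iii)), which hinges on $g^{-1}ng\in N_B(\A)$ and on reducing to known behavior of $\s$ on $B(\A)$, does not go through.

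Your reduction to the local identity $\sigma_{r,v}(n_v,g)\,\sigma_{r,v}(n_vg,n'_v)=1$ is correct, but you then try to prove it using only the triviality of $\sigma_r$ on $N_B\times N_B$ plus the false normalization. What is actually needed --- and what the paper uses --- is the stronger fact from \cite[Theorem~7]{BLS} that $\sigma_{r,v}(n,h)=1$ for $n\in N_B(F_v)$ and \emph{any} $h\in\GL_r(F_v)$ (not just $h\in N_B$). The paper's proof proceeds by first quoting \cite[Lemma~1.9]{Takeda1} for the cases $n=1$ or $n'=1$ (giving $\s(gn')=\s(g)\s(n')$), then applying this BLS input as $\sigma_{r,v}(n_v,gn'_v)=1$ to show simultaneously that $s_r(ngn')$ is defined (via the relation $s_r(ngn')=\sigma_r(n,gn')s_r(n)s_r(gn')/\tau_r(n,gn')$, whose right side is now a finite product) and that $\s(ngn')=\s(n)\s(gn')$. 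Once you replace the conjugation maneuver by this direct cocycle vanishing, your argument repairs to essentially the paper's.
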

\begin{proof}
Let us first note that in \cite[Lemma
1.9]{Takeda1}, it is shown that both $\s(ng)$ and $\s(gn')$ are
defined and moreover $\s(ng)=\s(n)\s(g)$ and
$\s(gn')=\s(g)\s(n')$. Namely the lemma holds for $n=1$ or
$n'=1$. Hence it suffices to show 
$\s(ngn')$ is defined and $\s(ngn')=\s(n)\s(gn')$. But if $s_r(ngn')$ is defined,
\[
s_r(ngn')=\sigma_r(n,gn')s_r(n)s_r(gn')/\tau_r(n,gn'),
\]
Note that here all of $s_r(n), s_r(gn')$ and
$\tau_r(n,gn')$ are defined. Moreover, locally
$\sigma_r(n_v,gn_v)=1$ for all $v$ by \cite[Theorem 7,
p.153]{BLS}. Hence $s_r(ngn')$ is defined. Thus $\s(ngn')$ is defined.
Moreover, since $\sigma_r(n,gn')=1$, we have $\s(ngn')=\s(n)\s(gn')$.
\end{proof}

Analogously to the local case, if the partial global section $\s$ is
defined on a subgroup $H\subseteq\GL_r(\A)$ and $\s|_H$ is a
homomorphism, we denote the image $\s(H)$ by $H^\ast$ or simply by $H$
when there is no danger of confusion.

We define the groups like $\GLtt_r(\A)$, $\MPtt(\A)$, $\Ptt(\A)$, etc
completely analogously to the local case. Also $\At^\times$ is a group whose
underlying set is $\A^\times\times\{\pm1\}$ and the group structure is given by
the global Hilbert symbol analogously to the local case. Also just like the local
case, the preimage $\widetilde{Z_{\GL_r}}(\A)$ of the center $Z(\A)$ is the center of
$\GLt_r(\A)$ only if $r=2q+1$. If $r=2q$, then the center of
$\GLt_r(\A)$ is the set of elements of the form $(aI_r,\xi)$ where
$a\in\A^{\times 2}$ and $\xi\in\{\pm 1\}$, and
$\widetilde{Z_{\GL_r}}(\A)$ is the center of only
$\GLtt_r(\A)$.

\quad

Let $\pi$ be a representation of $\widetilde{H}(\A)\subseteq \GLt_r(\A)$. Just like the local
case, we call $\pi$ genuine if $(1,\xi)\in\widetilde{H}(\A)$ acts as
multiplication by $\xi$. If $\pi$ is a genuine automorphic
representation of $\GLt_r(\A)$, then for each automorphic form
$f\in\pi$ we have $f(g,\xi)=\xi f(g,1)$ for all
$(g,\xi)\in\GLt_r(\A)$. Also any representation of
$H(\A)$ is viewed as a representation of $\widetilde{H}(\A)$ by
pulling it back by the canonical projection $p$, which we also denote by $\pi$. In
particular, this applies to the modular character $\delta_P$ for each
parabolic $P(\A)$.

\quad

We can also describe $\GLt_r(\A)$ as a quotient of a restricted direct
product of the groups $\GLt_r(F_v)$ as follows. Consider the
restricted direct product $\prod_v'\GLt_r(F_v)$ with respect to the
groups $\kappa(K_v)=\kappa(\GL_r(\mathcal{O}_{F_v}))$ for all $v$ with
$v\nmid 2$ and $v\nmid\infty$. If we denote each element in this
restricted direct product by $\Pi_v(g_v,\xi_v)$ so that $g_v\in
K_v$ and $\xi_v=1$ for almost all $v$, we have the
surjection
\begin{equation}\label{E:surjection}
    \rho:{\prod_v}'\GLt_r(F_v)\rightarrow\GLt_r(\A),\quad
    \Pi_v(g_v,\xi_v)\mapsto (\Pi_vg_v, \Pi_v\xi_v).
\end{equation}
This is a group homomorphism by our definition of $\GLt_r(F_v)$ and
$\GLt_r(\A)$. We have
\[
    {\prod_v}'\GLt_r(F_v)/\ker\rho\cong \GLt_r(\A),
\]
where $\ker\rho$ consists of the elements of the form
$\Pi_v(1,\xi_v)$ with $\xi_v=-1$ at an even number of $v$.

Suppose we are given a collection of irreducible admissible
representations $\pi_v$ of $\GLt_r(F_v)$ such that $\pi_v$ is
$\kappa(K_v)$-spherical for almost all $v$. Then we can form an
irreducible admissible representation of $\prod_v'\GLt_r(F_v)$ by
taking a restricted tensor product $\otimes_v'\pi_v$ as usual. Suppose
further that $\ker\rho$ acts trivially on $\otimes_v'\pi_v$, which is
always the case if each $\pi_v$ is genuine. Then it
descends to an irreducible admissible representation of $\GLt_r(\A)$,
which we denote by $\otimestt'_v\pi_v$, and call it the ``metaplectic
restricted tensor product''. Let us emphasize that the space for
$\otimestt'_v\pi_v$ is the same as that for
$\otimes_v'\pi_v$. Conversely, if $\pi$ is an irreducible admissible
representation of $\GLt_r(\A)$, it is written as
$\otimestt'_v\pi_v$ where $\pi_v$ is an irreducible admissible
representation of $\GLt_r(F_v)$, and for almost all $v$, $\pi_v$ is
$\kappa(K_v)$-spherical. (To see it, view $\pi$ as a representation of
the restricted product $\prod_v'\GLt_r(F_v)$ by pulling it back by
$\rho$ and apply the usual
tensor product theorem for the restricted product, which gives
$\otimes_v'\pi_v$, and it descends to $\otimestt_v'\pi_v$.) 

\quad


\subsection{The block-compatibility for $\GLt_r(\A)$}


We need to address an issue on the block-compatibility of
the global metaplectic double cover $\GLt_r(\A)$. As we already
mentioned, one cannot define $\GLt_r(\A)$ by using the
block-compatible local cocycles $\sigma_r$, but instead one needs to
introduce the cocycle $\tau_r$ which is not known to be block-compatible. To get
around it, one needs to introduce an intermediate cocycle $\tau_P$ for
each parabolic subgroup $P$.

Let $P(\A)\subseteq\GL_r(\A)$ be a parabolic subgroup whose Levi part
is $M_P(\A)=\GL_{r_1}(\A)\times\cdots\times\GL_{r_k}(\A)$. We define a
2-cocycle $\tau_P$ on $M_P(\A)$ by
\begin{equation}\label{E:compatibility_global}
\tau_P(\begin{pmatrix}g_1&&\\ &\ddots&\\ &&g_k\end{pmatrix},
\begin{pmatrix}g'_1&&\\ &\ddots&\\ &&g'_k\end{pmatrix})
=\prod_{i=1}^k\tau_{r_i}(g_i,g_i')\prod_{1\leq i<j\leq k}(\det(g_i), \det(g_j'))_{\A},
 \end{equation}
where $(-,-)_{\A}$ is the global Hilbert symbol. We define the group
$^c\MPt(\A)$ to be $M_P(\A)\times\{\pm 1\}$ as a set and the group law is given
by this cocycle $\tau_P$. Then it is shown in \cite{Takeda2} that 
\[
^c\MPt\cong \MPt.
\]
Namely the cocycle $\tau_P$ is cohomologous to $\tau_r|_{M_P(\A)\times
M_P(\A)}$.

\quad


\subsection{\bf The metaplectic tensor product}\label{S:tensor_product}


In this subsection, we assume $F$ is either a number field or a
local field. Let
$P\subseteq\GL_r$ be a parabolic subgroup whose Levi part is
$M_P=\GL_{r_1}\times\cdots\times\GL_{r_k}$. Given irreducible
admissible representations (or automorphic representations)
$\pi_1,\dots,\pi_k$ of $\GLt_{r_1},\dots,\GLt_{r_k}$, we would like to
construct a representation of $\Mt_P$ that can be called the
``metaplectic tensor product'' of $\pi_1,\dots,\pi_k$. However unlike the
non-metaplectic case, the construction is far from trivial, because
$\MPt$ is not the direct product
$\GLt_{r_1}\times\cdots\times\GLt_{r_k}$, and even worse there is no
natural map between them.  The construction of the metaplectic tensor
product for the local case was carried out by Mezo in \cite{Mezo} and the global case was
carried out by the author in \cite{Takeda2}. In what follows, we will
briefly recall this construction.

Assume $F$ is local. Let $\pin_i$ be an irreducible constituent of the
restriction $\pi_i|_{\GLtt_{r_i}(F)}$. Then the (usual) tensor product
$\pin_1\otimes\cdots\otimes\pin_k$, which is a representation of the
direct product $\GLtt_{r_1}(F)\times\cdots\times\GLtt_{r_k}(F)$,
descends to an irreducible admissible representation
$\pin_1\otimest\cdots\otimest\pin_k$ of
$\MPtt(F)=\GLtt_{r_1}(F)\timest\cdots\timest\GLtt_{r_k}(F)$. Let
$\omega$ be a character on the center $Z_{\GLt_r}(F)$ such that
$\omega$ agrees with $\pin_1\otimest\cdots\otimest\pin_k$ on the
overlap $Z_{\GLt_r}(F)\cap\Mtt_P(F)$, so
that we can extend $\pin_1\otimest\cdots\otimest\pin_k$ to a
representation of $Z_{\GLt_r}(F)\MPtt(F)$ by letting
$Z_{\GLt_r}(F)$ act by $\omega$, which we denote by
$\omega(\pin_1\otimest\cdots\otimest\pin_k)$. Now extend it to a
representation of some subgroup $\Ht(F)\subseteq\MPt(F)$, so that the
induced representation
$\Ind_{\Ht(F)}^{\Mt_P(F)}\omega(\pin_1\otimest\cdots\otimest\pin_k)$
is irreducible. Then Mezo has shown that this induced representation
is independent of all the choices made except the character
$\omega$. We denote this induced representation by
\[
\pi_\omega:=(\pi_1\otimest\cdots\otimest\pi_k)_\omega,
\]
and call it the metaplectic tensor product of $\pi_1,\dots,\pi_k$ with
respect to the character $\omega$. Moreover one can show that the induced
representation
$\Ind_{Z_{\GLt_r}(F)\MPtt(F)}^{\Mt_P(F)}\omega(\pin_1\otimest\cdots\otimest\pin_k)$
not only contains $\pi_\omega$ but any of its constituent is
(isomorphic to) $\pi_\omega$. Let us mention that if $r$ is even we have
the situation $Z_{\GLt_r}(F)\subseteq\MPtt(F)$, in which case
  there is no choice for $\omega$ and the metaplectic tensor product
  is canonical and we sometimes write simply
  $\pi_1\otimest\cdots\otimest\pi_k$.

Next assume $F$ is global, and assume all the $\pi_i$ are irreducible unitary
automorphic representations of $\GLt_{r_i}(\A)$. Let $\pin_i$ be an
irreducible constituent of the representation of $\GLtt_{r_i}(\A)$
obtained by restricting the automorphic forms in $\pi_i$ to
$\GLtt_{r_i}(\A)$. One can construct an ``automorphic
representation'' $\pin_1\otimest\cdots\otimest\pin_k$ analogously to
the local case.  Let $\omega$ be a Hecke character of $Z_{\GLt_r}(\A)$  such that
$\omega$ agrees with $\pin_1\otimest\cdots\otimest\pin_k$ on the
overlap $Z_{\GLt_r}(\A)\cap\Mtt_P(\A)$. Then essentially in the
analogous way to the local case, one can construct an automorphic
representation $\pi_\omega$ of $\Mt_P(\A)$, which is independent of
all the choices made except $\omega$, such that
\[
\pi_\omega=\otimestt_v'\pi_{\omega_v},
\]
\ie it is the restricted metaplectic tensor product of the local
metaplectic tensor products $\pi_{\omega_v}$. Just like the local
case we write
$\pi_\omega=(\pi_1\otimest\cdots\otimest\pi_k)_\omega$. 

In \cite{Takeda2} it is shown that the metaplectic tensor product
behaves just like the usual tensor product for the non-metaplectic
case. First of all, the cuspidality and square-integrability are preserved.

\begin{Prop}\label{P:tensor_cuspidal}
Assume $F$ is global. If each $\pi_i$ is square-integrable modulo center
(resp. cuspidal), then the tensor product
$(\pi_1\otimest\cdots\otimest\pi_k)_\omega$ is square-integrable
modulo center.
(resp. cuspidal).
\end{Prop}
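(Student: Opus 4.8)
The plan is to reduce the statement to the corresponding well-known fact for the non-metaplectic case by exploiting the explicit description of $\pi_\omega=(\pi_1\otimest\cdots\otimest\pi_k)_\omega$ as an induced representation from $Z_{\GLt_r}(\A)\MPtt(\A)$, together with the fact (recalled in the excerpt) that every constituent of $\Ind_{Z_{\GLt_r}(\A)\MPtt(\A)}^{\Mt_P(\A)}\omega(\pin_1\otimest\cdots\otimest\pin_k)$ is isomorphic to $\pi_\omega$. First I would treat the cuspidal case. Here the key observation is that cuspidality of an automorphic form is detected by the vanishing of constant terms along unipotent radicals of parabolic subgroups of $M_P$, and every parabolic of $M_P=\GL_{r_1}\times\cdots\times\GL_{r_k}$ is a product of parabolics $P_i\subseteq\GL_{r_i}$. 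Since $\MPtt(\A)$ has finite index — in a suitable sense — inside $\MPt(\A)$ (more precisely, $\Mt_P(\A)/Z_{\GLt_r}(\A)\MPtt(\A)$ is a finite abelian group), restriction of an automorphic form on $\Mt_P(\A)$ to $\MPtt(\A)$ preserves the rapid decay and the constant-term conditions, and conversely inducing up from a cuspidal representation of $\MPtt(\A)$ produces forms all of whose constant terms vanish. So it suffices to know that $\pin_1\otimest\cdots\otimest\pin_k$ is cuspidal as a representation of $\MPtt(\A)$, and this in turn follows because $\pin_i$ is a constituent of the restriction of the cuspidal $\pi_i$ to $\GLtt_{r_i}(\A)$, and restriction to a finite-index-type subgroup preserves cuspidality (a constant term of a restricted form is the restriction of a constant term).

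Next I would handle the square-integrability statement. The strategy is the same: square-integrability modulo center is inherited under restriction to $\GLtt_{r_i}(\A)$ because $\GLtt_{r_i}(\A)$ differs from $\GLt_{r_i}(\A)$ only by a group that is compact modulo center, so an $L^2$-bound modulo $Z_{\GLt_{r_i}}(\A)$ persists; hence each $\pin_i$ is square-integrable mod center, and therefore so is the (ordinary, finite) tensor product $\pin_1\otimest\cdots\otimest\pin_k$ on $\MPtt(\A)$, and so is the extension $\omega(\pin_1\otimest\cdots\otimest\pin_k)$ to $Z_{\GLt_r}(\A)\MPtt(\A)$ once one notes $\omega$ is unitary. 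Finally, unitary induction from a finite-index-type subgroup preserves square-integrability modulo center: the matrix coefficients of the induced representation are, after integrating over the compact-mod-center quotient, controlled by those of $\omega(\pin_1\otimest\cdots\otimest\pin_k)$. Concretely, one picks an automorphic form $f$ in $\pi_\omega$, restricts it to $Z_{\GLt_r}(\A)\MPtt(\A)$, and estimates $\int_{Z\MPtt\backslash\Mt_P}|f|^2$ — or rather $\int$ over the whole $Z_{\GLt_r}(\A)\backslash\Mt_P(\A)$ — by breaking the domain along the finite quotient.

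The main obstacle I anticipate is purely book-keeping of the metaplectic centers: for $r$ even one has $Z_{\GLt_r}(\A)\subseteq\MPtt(\A)$ while for $r$ odd one does not, so the group $Z_{\GLt_r}(\A)\MPtt(\A)$ genuinely sits strictly between $\MPtt(\A)$ and $\Mt_P(\A)$, and one must check that the modulus characters and the measure decompositions behave correctly across these inclusions — in particular that $Z_{\GLt_r}(\A)\backslash Z_{\GLt_r}(\A)\MPtt(\A)\backslash\Mt_P(\A)$ has finite volume and the relevant modular character is trivial there so that unitarity is not spoiled. This is where the results of \cite{Takeda2} on the structure of $\Mt_P(\A)$ and on $\pi_\omega=\otimestt'_v\pi_{\omega_v}$ do the real work; modulo those, everything is the standard argument that cuspidality and square-integrability are preserved under induction and restriction between groups that agree up to something compact modulo center. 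I would therefore structure the write-up as: (i) recall that $\pi_\omega$ is an irreducible constituent of $\Ind_{Z_{\GLt_r}(\A)\MPtt(\A)}^{\Mt_P(\A)}\omega(\pin_1\otimest\cdots\otimest\pin_k)$; (ii) prove $\pin_i$ cuspidal (resp.\ square-integrable mod center) $\Rightarrow$ same for $\pin_1\otimest\cdots\otimest\pin_k$; (iii) prove the extension by $\omega$ preserves the property; (iv) prove unitary induction up to $\Mt_P(\A)$ preserves it; (v) conclude via irreducibility that $\pi_\omega$ has the property.
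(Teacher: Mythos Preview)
The paper does not actually prove this proposition: it is one of several results (Propositions~\ref{P:tensor_cuspidal}--\ref{P:tensor_unique}) that are merely quoted from the companion paper \cite{Takeda2}, where the global theory of metaplectic tensor products is developed. So there is no argument here to compare your proposal against.

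That said, your outline is essentially the natural strategy and is likely close in spirit to what is carried out in \cite{Takeda2}: one realizes $\pi_\omega$ inside the induction of $\omega(\pin_1\otimest\cdots\otimest\pin_k)$ from $Z_{\GLt_r}(\A)\MPtt(\A)$, checks that each $\pin_i$ inherits cuspidality (resp.\ square-integrability modulo center) from $\pi_i$ under restriction, and then pushes these properties through the tensor, the central extension by $\omega$, and the induction. One caution: your repeated appeal to ``finite index'' is not literally correct. The quotient $\Mt_P(\A)/Z_{\GLt_r}(\A)\MPtt(\A)$ surjects onto a product of copies of $\A^\times/\A^{\times 2}$, which is not finite. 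What makes the argument work is rather that (a) every unipotent radical of a parabolic of $M_P$ already lies in $M_P^{(2)}$, so constant terms are computed entirely inside $\MPtt(\A)$ and cuspidality transfers directly; and (b) for square-integrability one uses that $M_P(F)M_P^{(2)}(\A)\backslash M_P(\A)$ has finite volume (compactness of $F^\times\A^{\times 2}\backslash\A^\times$), not finiteness of any abstract index. If you tighten the language accordingly, your plan goes through.
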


The metaplectic tensor product behaves as expected under the action of
the Weyl group element. Namely,
\begin{Prop}\label{P:tensor_Weyl}
Let $w\in W_M$ be a Weyl group
element of $\GL_r$ that only permutes the $\GL_{r_i}$-factors of
$M$. Namely for each
$(g_1,\dots,g_k)\in\GL_{r_1}\times\cdots\times\GL_{r_k}$, we have $w
(g_1,\dots,g_k)w^{-1}=(g_{\sigma(1)},\dots,g_{\sigma(k)})$ for a
permutation $\sigma\in S_k$ of $k$ letters. Then both locally and
globally, we have
\[
^{w}(\pi_1\otimest\cdots\otimest\pi_k)_\omega
\cong(\pi_{\sigma(1)}\otimest\cdots\otimest\pi_{\sigma(k)})_\omega,
\]
where the left hand side is the twist of
$(\pi_1\otimest\cdots\otimest\pi_k)_\omega$ by $w$.
\end{Prop}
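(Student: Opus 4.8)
The plan is to reduce the statement to the corresponding fact about the metaplectic tensor product on $\MPtt$, where the group is an honest direct product $\GLtt_{r_1}\timest\cdots\timest\GLtt_{r_k}$ and the assertion becomes the familiar identity $^{w}(\pin_1\otimest\cdots\otimest\pin_k)\cong\pin_{\sigma(1)}\otimest\cdots\otimest\pin_{\sigma(k)}$ for the usual tensor product. Concretely, I would first unwind the construction of $\pi_\omega$: in the local case it is obtained from $\pin_1\otimest\cdots\otimest\pin_k$ by extending via $\omega$ across $Z_{\GLt_r}(F)$ and then inducing up to $\MPt(F)$ through an intermediate subgroup $\Ht(F)$. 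Since conjugation by $w$ is a group automorphism of $\MPt$ that carries $Z_{\GLt_r}$ to itself (the center is a characteristic subgroup), carries $\MPtt$ to $\MPtt$, and carries a suitable choice of $\Ht$ to a suitable choice of $\Ht$, the functor $^{w}(-)$ commutes with both the extension-by-$\omega$ step and the parabolic induction step; one needs only $^{w}\omega=\omega$, which holds because $\omega$ is a character of the center and $w$ acts trivially there by conjugation. So everything comes down to how $w$ acts on the innermost object $\pin_1\otimest\cdots\otimest\pin_k$.

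For that innermost step, recall that $\pin_1\otimest\cdots\otimest\pin_k$ is the descent to $\MPtt(F)$ of the genuine tensor product $\pin_1\otimes\cdots\otimes\pin_k$ of the direct product $\GLtt_{r_1}(F)\times\cdots\times\GLtt_{r_k}(F)$. Conjugation by $w$, on the direct product, is (up to the set-theoretic section bookkeeping for the cocycle, which only introduces signs that cancel because the representations are genuine) precisely the permutation of factors by $\sigma^{-1}$; hence $^{w}(\pin_1\otimes\cdots\otimes\pin_k)\cong\pin_{\sigma(1)}\otimes\cdots\otimes\pin_{\sigma(k)}$ by inspection. The key technical point to verify is that this isomorphism is compatible with the two descents (one for each side) to $\MPtt(F)$, i.e. that the kernel of the descent map is preserved — it is, since $w$ acts trivially on the $\{\pm1\}$-coordinate. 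Then one reassembles: $^{w}\pi_\omega$ is built from $^{w}(\pin_1\otimest\cdots\otimest\pin_k)\cong\pin_{\sigma(1)}\otimest\cdots\otimest\pin_{\sigma(k)}$ by the same recipe with the same $\omega$, which by definition (and Mezo's uniqueness) is $(\pi_{\sigma(1)}\otimest\cdots\otimest\pi_{\sigma(k)})_\omega$.

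For the global case I would argue the same way factorwise, using $\pi_\omega=\otimestt_v'\pi_{\omega_v}$ from \cite{Takeda2}: $w$ acts componentwise on the restricted metaplectic tensor product, so $^{w}\pi_\omega\cong\otimestt_v'{}^{w}(\pi_{\omega_v})\cong\otimestt_v'(\pi_{\sigma(1)}\otimest\cdots\otimest\pi_{\sigma(k)})_{\omega_v}=(\pi_{\sigma(1)}\otimest\cdots\otimest\pi_{\sigma(k)})_\omega$, where the middle isomorphism is the local statement just proved and one uses that $\sigma$ and $\omega$ do not depend on $v$. One should also note $w\in W_M\subseteq W$ lies in the image of the section $\s$ (as elements of $W$ are lifted via $\s$), so conjugation by $w$ is literally conjugation by a genuine group element of $\GLt_r$, making $^{w}(-)$ well-defined on genuine representations; the only subtlety is that $\s|_W$ need not be a homomorphism, but since $w^2$ is again handled by $\s$ up to a central sign and conjugation by a central element is trivial, the twist functor is well-defined up to isomorphism, which is all that is claimed.

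I expect the main obstacle to be purely bookkeeping: tracking the cocycle-induced signs when one writes out conjugation by $\s(w)$ on elements of $\MPtt$ and checking they are absorbed by genuineness, together with confirming that Mezo's/the author's construction is functorial enough that each of its steps (restriction to $\GLtt_{r_i}$, genuine tensor product, descent, extension by $\omega$, parabolic induction to $\MPt$) visibly commutes with the automorphism induced by $w$. No step is conceptually hard; the care lies in making the ``same recipe, same $\omega$'' argument airtight so that the uniqueness theorems of \cite{Mezo} and \cite{Takeda2} can be invoked to conclude the two induced representations are isomorphic rather than merely built analogously.
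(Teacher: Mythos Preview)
The paper does not actually prove this proposition: it is one of several results on metaplectic tensor products (Propositions~\ref{P:tensor_cuspidal}--\ref{P:tensor_unique}) that are merely quoted from \cite{Takeda2}, where the construction and its basic properties are developed. So there is no in-paper proof to compare against.

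That said, your sketch is a reasonable outline of how such a proof would go and is presumably close in spirit to what appears in \cite{Takeda2}. The essential idea---that conjugation by $\s(w)$ is a group automorphism of $\MPt$ preserving $Z_{\GLt_r}$ and $\MPtt$, hence commuting with each stage of Mezo's recipe (restriction, genuine tensor product, extension by $\omega$, induction), and that on the direct-product level it simply permutes the factors---is correct. Your remark that genuineness absorbs the cocycle signs is the right way to handle the bookkeeping, and invoking the uniqueness statements to pass from ``built by the same recipe'' to ``isomorphic'' is exactly what is needed. The global reduction via $\pi_\omega=\otimestt_v'\pi_{\omega_v}$ is also the natural route. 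The only point on which one should be slightly more careful than you indicate is verifying that the choice of irreducible constituent $\pin_i\subseteq\pi_i|_{\GLtt_{r_i}}$ is compatible with the permutation (i.e.\ that $^{w}(\pin_i)$ is again an irreducible constituent of $\pi_{\sigma(i)}|_{\GLtt_{r_{\sigma(i)}}}$), but this is immediate since conjugation by $w$ carries the restriction of $\pi_i$ to that of $\pi_{\sigma(i)}$.
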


The metaplectic tensor product is compatible with parabolic induction.
\begin{Prop}\label{P:tensor_parabolic}
Both locally and globally, let $P=MN\subseteq\GL_r$ be the standard
parabolic subgroup whose Levi
part is $M=\GL_{r_1}\times\cdots\times\GL_{r_k}$. Further for each
$i=1,\dots,k$ let $P_i=M_iN_i\subseteq\GL_{r_i}$ be the standard
parabolic of $\GL_{r_i}$ whose Levi part is
$M_i=\GL_{r^i_1}\times\cdots\times\GL_{r^i_{l_i}}$. For each $i$, we are
given a representation 
\[
\sigma_i:=(\tau_{i,1}\,\otimest\cdots\otimest\,\tau_{i,l_i})_{\omega_i}
\]
of $\Mt_i$, which is given as the metaplectic tensor product of the
representations $\tau_{i,1},\dots,\tau_{i,l_i}$ of
$\GLt_{r^i_1},\dots,\GLt_{r^i_{l_i}}$, respectively. Assume that $\pi_i$ is an
irreducible constituent of the induced representation
$\Ind_{\Pt_i}^{\GLt_{r_i}}\sigma_i$. Then the metaplectic tensor
product
\[
\pi_\omega:=(\pi_1\,\otimest\cdots\otimest\,\pi_k)_\omega
\]
is an irreducible constituent of the induced representation
\[
\Ind_{\Qt}^{\Mt}(\tau_{1,1}\,\otimest\cdots\otimest\,\tau_{1, l_1}\,\otimest\cdots\otimest
\,\tau_{k,1}\,\otimest\cdots\otimest\,\tau_{k,l_k})_\omega,
\]
where $Q$ is the standard parabolic subgroup of $M$ whose Levi part is
$M_1\times\cdots\times M_k$. (Here ``irreducible constituent'' can be
replaced by ``irreducible quotient'' or ``irreducible
subrepresentation'', and the analogous proposition still holds.)
\end{Prop}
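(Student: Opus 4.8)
\textit{The plan.} The plan is to reduce the global assertion to the local one, and then to prove the local assertion by peeling off, on both sides of the asserted inclusion, the two layers of parabolic induction that are built into the definition of the metaplectic tensor product, so that everything becomes an instance of transitivity of parabolic induction on a product of covering groups $\GLt_n$ --- where the representations in play are honest tensor products and the classical theory applies.

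\textit{Reduction to the local case.} Globally, every irreducible admissible representation of $\Mt(\A)$ is a metaplectic restricted tensor product $\otimestt'_v\rho_v$ of local representations; both parabolic induction and the metaplectic tensor product are compatible with $\otimestt'_v$; and an irreducible constituent of $\Ind_{\Pt_i(\A)}^{\GLt_{r_i}(\A)}\sigma_i$ has, at each place $v$, an irreducible constituent of the corresponding local induced representation as its $v$-component (and is spherical for almost all $v$). Hence it suffices to prove the local statement at every place and reassemble, working throughout with the realization of $\Mt(\A)$ via the block-compatible cocycle $\tau_P$ so that the local data multiply correctly.

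\textit{The local case.} Assume $F$ is local. By the characterization of the metaplectic tensor product recalled above, for any irreducible constituents $\pin_i$ of $\pi_i|_{\GLtt_{r_i}}$ every irreducible constituent of $\Ind_{Z_{\GLt_r}\Mtt}^{\Mt}\omega(\pin_1\otimest\cdots\otimest\pin_k)$ is isomorphic to $\pi_\omega:=(\pi_1\otimest\cdots\otimest\pi_k)_\omega$, and since $Z_{\GLt_r}\Mtt$ is an open subgroup of finite index this induced representation is semisimple; so by Frobenius reciprocity an irreducible representation $\pi'$ of $\Mt$ is isomorphic to $\pi_\omega$ as soon as $Z_{\GLt_r}$ acts on $\pi'$ by $\omega$ and $\pi'|_{\Mtt}$ contains $\pin_1\otimest\cdots\otimest\pin_k$ for some choice of the $\pin_i$. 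Now every irreducible constituent of $\Ind_{\Qt}^{\Mt}\Sigma$, with $\Sigma:=(\tau_{1,1}\otimest\cdots\otimest\tau_{k,l_k})_\omega$, has $Z_{\GLt_r}$ acting by $\omega$ --- because $Z_{\GLt_r}$ is central in $\Mt$ and contained in $\MQt$, $\delta_Q$ is trivial on scalars, and $\Sigma$ has central character $\omega$ --- so it remains to exhibit $\pin_1\otimest\cdots\otimest\pin_k$, for a suitable choice of the $\pin_i$, as a constituent of $(\Ind_{\Qt}^{\Mt}\Sigma)|_{\Mtt}$: it will then occur in $\pi'|_{\Mtt}$ for some constituent $\pi'$ of $\Ind_{\Qt}^{\Mt}\Sigma$, and $\pi'\cong\pi_\omega$. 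To exhibit it, note first that $\det M_i=F^\times$ forces $\GL_{r_i}=\GL^{(2)}_{r_i}P_i$, hence $\Mt=\Mtt\Qt$; Mackey's formula then involves a single double coset and --- the $\delta^{1/2}$-normalizations matching because the unipotent radicals do --- gives $(\Ind_{\Qt}^{\Mt}\Sigma)|_{\Mtt}\cong\Ind_{\Mtt\cap\Qt}^{\Mtt}(\Sigma|_{\Mtt\cap\Qt})$. Descending this, and likewise $\sigma_i$ and $\pi_i$ (hence $\pin_i$), down to the product $\GLtt_{r^1_1}\timest\cdots\timest\GLtt_{r^k_{l_k}}$ --- on which, by the definition of the metaplectic tensor product, $\Sigma$ and the $\sigma_i$ are all built by inducing the \emph{honest} tensor product of constituents $\tau^{(2)}_{i,j}$ of $\tau_{i,j}|_{\GLtt_{r^i_j}}$ --- and matching the resulting iterated inductions by transitivity of parabolic induction (which on this product group reduces to the classical transitivity applied factor by factor) produces the desired constituent. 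The ambiguity in the choices of the $\pin_i$ and the $\tau^{(2)}_{i,j}$, which change by $\GLtt$-conjugates as one moves between Mackey cosets, is harmless, since the metaplectic tensor products $\pi_\omega$, $\Sigma$, $\sigma_i$ do not depend on these choices. Finally, restriction, the Mackey isomorphism, and parabolic induction are all exact, and finite-index Frobenius reciprocity holds in both directions, so tracking positions through the argument replaces ``constituent'' by ``subrepresentation'' or ``quotient'', which gives the parenthetical refinement.

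\textit{The main obstacle.} The real work is the bookkeeping in the last step: organizing the chain of square-determinant subgroups $\GLtt_{r^1_1}\timest\cdots\timest\GLtt_{r^k_{l_k}}\subseteq\cdots\subseteq\Mtt$ together with the center $Z_{\GLt_r}$ and the unipotent radicals $N_i$ and $N_Q$; running the Mackey decompositions down this chain and checking which coset supplies the term one wants; confirming that the $\delta^{1/2}$-normalizations cancel (they enter only through the linear quotients, since $N_i$ and $N_Q$ split canonically via $\s$); and verifying --- from the way conjugation by scalar and block-diagonal elements acts on the covers $\GLtt$ --- that the $\GLtt$-conjugation ambiguities genuinely leave all the metaplectic tensor products unchanged. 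Once these points are settled the argument is a chain of applications of transitivity of induction, Mackey's formula, and Frobenius reciprocity, with nothing unexpected.
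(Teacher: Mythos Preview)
The paper does not actually prove this proposition: it is quoted without proof from the author's companion paper \cite{Takeda2}, where the theory of global metaplectic tensor products is developed. So there is no ``paper's own proof'' to compare against here; the proposition functions in this paper purely as an imported black box.

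That said, your sketch follows the natural architecture one would expect for such a result and is broadly sound. The reduction to the local case via $\otimestt'_v$, the use of the characterization ``every irreducible constituent of $\Ind_{Z_{\GLt_r}\Mtt}^{\Mt}\omega(\pin_1\otimest\cdots\otimest\pin_k)$ is $\pi_\omega$'' together with Frobenius reciprocity, and the single-coset Mackey step coming from $\Mt=\Mtt\Qt$ are all the right moves. The honest work, as you correctly flag, is the bookkeeping on the chain of $\GLtt$-type subgroups and the verification that the various choices of $\pin_i$ and $\tau^{(2)}_{i,j}$ (which shift by conjugation as one moves through Mackey data) do not affect the outcome. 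One point worth tightening: your criterion ``$\pi'\cong\pi_\omega$ as soon as $Z_{\GLt_r}$ acts by $\omega$ and $\pi'|_{\Mtt}$ contains $\pin_1\otimest\cdots\otimest\pin_k$'' silently uses that $\omega$ agrees with $\pin_1\otimest\cdots\otimest\pin_k$ on the overlap $Z_{\GLt_r}\cap\Mtt$, which you should check is automatic from the hypothesis that $\pi'$ has central character $\omega$ and contains that tensor product on restriction. For a fully rigorous write-up you would want to consult \cite{Takeda2} directly, since the compatibility statements you invoke (e.g.\ that the global metaplectic tensor product is $\otimestt'_v$ of the local ones, and that conjugation ambiguities are harmless) are themselves theorems proved there.
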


The global metaplectic tensor product behaves nicely with
restriction to a smaller Levi in the following sense.
\begin{Prop}\label{P:tensor_restriction}
Assume $F$ is global. 
\begin{enumerate}[(a)]
\item Let
\[
M_2=\GL_{r_2}\times\cdots\times\GL_{r_k}\subseteq 
M=\GL_{r_1}\times\GL_{r_2}\times\cdots\times\GL_{r_k}
\]
be the natural embedding in the lower right corner. Then there exists a realization of the
metaplectic tensor product
$\pi_\omega=(\pi_1\otimest\cdots\otimest\pi_k)_{\omega}$ such that for
each $f\in \pi$ and the restriction $f|_{\Mt_2(\A)}$ we have
\[
f|_{\Mt_2(\A)}\in \bigoplus_{\delta\in \GL_{r_1}(F)}
m_\delta(\pi_2\otimest\cdots\otimest\pi_k)_{\omega_\delta},
\]
where
$(\pi_2\otimest\cdots\otimest\pi_k)_{\omega_\delta}$
is the metaplectic tensor product of $\pi_2,\dots,\pi_k$,
$\omega_\delta$ is a certain character twisted by 
$\delta\in \GL_{r_1}(F)$ and $m_\delta\in\Z^{\geq 0}$ is a multiplicity. 

\item Let
\[
M'_2=\GL_{r_1}\times\cdots\times\GL_{r_{k-1}}\subseteq 
M=\GL_{r_1}\times\cdots\times\GL_{r_{k-1}}\times\GL_{r_k}
\]
be the natural embedding in the upper left corner. Then there exists a
realization (possibly different from the above) of the
metaplectic tensor product
$\pi_\omega=(\pi_1\otimest\cdots\otimest\pi_k)_{\omega}$ such that for
each $f\in \pi$ and the restriction $f|_{\Mt'_2(\A)}$ we have
\[
f|_{\Mt'_2(\A)}\in \bigoplus_{\delta'\in \GL_{r_k}(F)}
m_{\delta'}(\pi_1\otimest\cdots\otimest\pi_{k-1})_{\omega_{\delta'}},
\]
where
$(\pi_1\otimest\cdots\otimest\pi_{k-1})_{\omega_{\delta'}}$
is the metaplectic tensor product of $\pi_1,\dots,\pi_{k-1}$,
$\omega_{\delta'}$ is a certain character twisted by 
$\delta'\in\GL_{r_k}(F)$ and $m_{\delta'}\in\Z^{\geq 0}$ is a multiplicity. 
\end{enumerate}
\end{Prop}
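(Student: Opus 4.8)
The plan is to reduce both statements to the corresponding facts about the ordinary (non-metaplectic) tensor product by peeling off one $\GL$-factor at a time, using the construction of the global metaplectic tensor product recalled above, namely that $\pi_\omega$ is (a constituent of) $\Ind_{Z_{\GLt_r}(\A)\MPtt(\A)}^{\Mt_P(\A)}\omega(\pin_1\otimest\cdots\otimest\pin_k)$, where $\pin_i$ is an irreducible constituent of the restriction of $\pi_i$ to $\GLtt_{r_i}(\A)$. For part (a), I would first choose the realization of $\pi_\omega$ as a space of automorphic forms on $\Mt_P(\A)$ obtained from this induced model; concretely, a form $f\in\pi_\omega$ is determined by its restriction to $Z_{\GLt_r}(\A)\MPtt(\A)$ together with a choice of coset representatives for $\Mt_P(\A)/Z_{\GLt_r}(\A)\MPtt(\A)$, and by strong approximation these coset representatives can be taken in $\GL_{r_1}(F)$ (embedded in the first block), since $\det:\GL_{r_1}(\A)\to\A^\times$ and $\A^\times/F^\times\A^{\times 2}$ control the obstruction. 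Restricting $f$ to $\Mt_2(\A)$ then amounts, block by block, to restricting $\pin_1$ to the relevant subgroup while leaving $\pin_2\otimest\cdots\otimest\pin_k$ essentially untouched; the sum over $\delta\in\GL_{r_1}(F)$ records the action of the chosen coset representatives, and conjugation by $\delta$ twists the central character $\omega$ to a new character $\omega_\delta$ and correspondingly permutes/twists the data defining $(\pi_2\otimest\cdots\otimest\pi_k)_{\omega_\delta}$. Each summand is then recognized as a metaplectic tensor product of $\pi_2,\dots,\pi_k$ with the appropriate central character, with some multiplicity $m_\delta\geq 0$, which is exactly the claimed decomposition.

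For part (b), the argument is symmetric but I would be careful that the required realization of $\pi_\omega$ may differ from the one used in (a): here one peels off the \emph{last} block $\GL_{r_k}$ rather than the first, so one uses a model of the induced representation adapted to $\Mt'_2$ sitting in the upper-left corner, with coset representatives taken in $\GL_{r_k}(F)$. Apart from this change of which factor is singled out, the same bookkeeping applies: restriction of automorphic forms to $\Mt'_2(\A)$, decomposition over $\delta'\in\GL_{r_k}(F)$, and identification of each summand with $(\pi_1\otimest\cdots\otimest\pi_{k-1})_{\omega_{\delta'}}$ via Mezo's uniqueness statement (any constituent of the relevant induced representation is the metaplectic tensor product). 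Throughout, Proposition \ref{P:tensor_Weyl} is the tool that lets me move a factor to the corner where restriction is cleanest, and Proposition \ref{P:tensor_cuspidal} guarantees that the summands are honest automorphic representations of the expected type.

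The main obstacle I expect is controlling the central-character twist $\omega_\delta$ (resp. $\omega_{\delta'}$) precisely and verifying that it depends only on the class of $\delta$ in an appropriate finite quotient — essentially $\GL_{r_1}(F)\backslash\GL_{r_1}(F)\cdot(F^\times\A^{\times 2})$ or the analogous quotient for the last block — so that the direct sum is genuinely indexed by a manageable set and the multiplicities $m_\delta$ are finite. This is where the non-triviality of $\widetilde{Z_{\GL_r}}$ inside $\GLt_r$ (for $r$ even versus odd, cf. the discussion after \eqref{E:compatibility}) enters and must be tracked carefully; the cocycle computation $\sigma_r(aI_r,a'I_r)=(a,a')_F^{\frac12 r(r-1)}$ together with block-compatibility \eqref{E:compatibility_global} is what pins down how conjugation by $\delta$ acts on the center, and hence how $\omega$ transforms. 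Once that dependence is made explicit, the rest is a routine unwinding of the induced model of the metaplectic tensor product and an application of Mezo's rigidity result. I would defer the detailed verification of these points to the cited construction in \cite{Takeda2}, quoting from there the precise form of $\omega_\delta$.
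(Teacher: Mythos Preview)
The paper does not actually prove this proposition: it is one of several results on the metaplectic tensor product (Propositions \ref{P:tensor_cuspidal}--\ref{P:tensor_unique}) that are merely quoted from \cite{Takeda2}, with no argument given in the present paper. So there is no ``paper's own proof'' to compare against here.

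That said, your sketch is a plausible outline of how the proof in \cite{Takeda2} should go, and you correctly identify the key ingredients: the induced model of $\pi_\omega$ from $Z_{\GLt_r}(\A)\MPtt(\A)$, the use of coset representatives in $\GL_{r_1}(F)$ (resp.\ $\GL_{r_k}(F)$) to organize the restriction, and the identification of each summand via the uniqueness/rigidity property of the metaplectic tensor product. Your caveat that parts (a) and (b) may require different realizations of $\pi_\omega$ is exactly the point the statement itself makes. One small caution: your appeal to ``strong approximation'' to place coset representatives in $\GL_{r_1}(F)$ is slightly imprecise as stated, since what you really need is that $\Mt_P(\A)/Z_{\GLt_r}(\A)\MPtt(\A)$ is controlled by $\A^\times/F^\times\A^{\times 2}$ together with $F$-rational elements, and this requires a more careful argument than a one-line invocation of strong approximation; but since you already defer the details to \cite{Takeda2}, this is not a genuine gap in your proposal.
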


Finally let us mention that the uniqueness of the metaplectic tensor product.
\begin{Prop}\label{P:tensor_unique}
Let $F$ be global (resp. local). Let $\pi_1,\dots,\pi_k$ and
$\pi'_1,\dots,\pi'_k$ be unitary automorphic
representations (resp. irreducible admissible representations) of
$\GLt_{r_1},\dots,\GLt_{r_k}$. They give rise to isomorphic
metaplectic tensor products with a Hecke character (resp. character) $\omega$, \ie
\[
(\pi_1\,\otimest\cdots\otimest\,\pi_k)_\omega\cong
(\pi'_1\,\otimest\cdots\otimest\,\pi'_k)_\omega,
\]
if and only if for each $i$ there exists a Hecke character (resp. character)
$\omega_i$ of $\GLt_{r_i}$ trivial on $\GLtt_{r_i}$ such that
$\pi_i\cong\omega_i\otimes\pi'_i$.
\end{Prop}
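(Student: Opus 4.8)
The plan is to reduce the statement to the already-understood behavior of the metaplectic tensor product under twisting, together with the uniqueness statements for the usual (non-metaplectic) tensor product on the groups $\GLtt_{r_i}$ and for parabolic induction. The ``if'' direction is the easy one: if $\pi_i\cong\omega_i\otimes\pi'_i$ with each $\omega_i$ trivial on $\GLtt_{r_i}$, then on restriction to $\GLtt_{r_i}$ the representations $\pi_i$ and $\pi'_i$ have the same irreducible constituents $\pin_i$, so the representations $\pin_1\otimest\cdots\otimest\pin_k$ of $\MPtt$ built from them coincide. Chasing the characters $\omega_i$ through the construction shows that the extension to $Z_{\GLt_r}\MPtt$ and then the induction to $\MPt$ differ only by the character $\omega\mapsto\omega\cdot(\prod_i\omega_i|_{Z})^{\pm1}$; so one recovers the same $\pi_\omega$ provided one keeps $\omega$ fixed. (One should check that the product of the $\omega_i$, being trivial on each $\GLtt_{r_i}$, is indeed trivial on $Z_{\GLt_r}\cap\MPtt$, which is where the compatibility condition in the definition of the metaplectic tensor product lives.)

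For the ``only if'' direction, I would start by restricting both sides of the isomorphism $(\pi_1\otimest\cdots\otimest\pi_k)_\omega\cong(\pi'_1\otimest\cdots\otimest\pi'_k)_\omega$ to the subgroup $\GLtt_{r_1}\times\cdots\times\GLtt_{r_k}$ (the direct product, which does sit inside $\MPtt$). By the construction recalled in Section \ref{S:tensor_product}, the restriction of $\pi_\omega$ to $\MPtt$ has all constituents isomorphic to $\pin_1\otimest\cdots\otimest\pin_k$, and restricting further to the direct product gives $\pin_1\otimes\cdots\otimes\pin_k$. Hence $\pin_1\otimes\cdots\otimes\pin_k\cong\pin'_1\otimes\cdots\otimes\pin'_k$ as representations of $\GLtt_{r_1}\times\cdots\times\GLtt_{r_k}$, and by the uniqueness of the ordinary tensor product (each factor being a genuine representation of $\GLtt_{r_i}$) we get $\pin_i\cong\pin'_i$ for every $i$. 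Thus $\pi_i$ and $\pi'_i$, restricted to $\GLtt_{r_i}$, share a common irreducible constituent. In the global case one should do this at almost every place and then globally, using $\pi_\omega=\otimestt'_v\pi_{\omega_v}$, to see that $\pin_{i,v}\cong\pin'_{i,v}$ for almost all $v$ and hence $\pin_i\cong\pin'_i$ as automorphic representations.

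The final step is to upgrade ``$\pi_i$ and $\pi'_i$ have a common constituent on $\GLtt_{r_i}$'' to ``$\pi_i\cong\omega_i\otimes\pi'_i$ for a character $\omega_i$ of $\GLt_{r_i}$ trivial on $\GLtt_{r_i}$''. This is a statement purely about a single $\GLt_{r_i}$ and its index-at-most-two subgroup $\GLtt_{r_i}$: two irreducible representations of $\GLt_{r_i}$ whose restrictions to $\GLtt_{r_i}$ contain a common irreducible piece differ by a character of the quotient $\GLt_{r_i}/\GLtt_{r_i}$, because $\GLt_{r_i}=\GLtt_{r_i}\cdot\widetilde{Z_{\GL_{r_i}}}$ up to a group whose image in the quotient is generated by $\det$ modulo squares, so that $\Ind_{\GLtt_{r_i}}^{\GLt_{r_i}}\pin_i$ is a direct sum of the twists of $\pi_i$ by such characters (Clifford theory, or in the global case strong multiplicity one plus the corresponding local fact). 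Comparing central characters pins down $\omega_i$ so as to be consistent with the fixed $\omega$ on both sides. I expect this last step — carefully controlling the characters of $\GLt_{r_i}/\GLtt_{r_i}$ and checking that their product restricts correctly to $Z_{\GLt_r}\cap\MPtt$, in both the even-$r$ case (where $Z_{\GLt_r}\subseteq\MPtt$ and there is no freedom in $\omega$) and the odd case — to be the main obstacle; the rest is bookkeeping with the construction from \cite{Mezo} and \cite{Takeda2}.
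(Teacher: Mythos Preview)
The paper does not give its own proof of this proposition. Like all of the propositions in Section~\ref{S:tensor_product} (Propositions~\ref{P:tensor_cuspidal}--\ref{P:tensor_unique}), it is stated without proof as a result imported from the author's companion paper \cite{Takeda2}; there is nothing in the present paper to compare your attempt against.

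That said, your outline follows the natural strategy and is broadly correct in the local case. One point to be careful with: you assert that the restriction of $\pi_\omega$ to $\MPtt$ has \emph{all} constituents isomorphic to a single $\pin_1\otimest\cdots\otimest\pin_k$. What the construction actually gives is that every constituent of $\Ind_{Z_{\GLt_r}\MPtt}^{\Mt_P}\omega(\pin_1\otimest\cdots\otimest\pin_k)$ is isomorphic to $\pi_\omega$; going the other way, the restriction $\pi_\omega|_{\MPtt}$ will in general contain several $\Mt_P$-conjugates of $\pin_1\otimest\cdots\otimest\pin_k$, not just one isotype. This does not break your argument---any common constituent still forces $\pin_i\cong{}^{g_i}\pin'_i$ for some $g_i$, and conjugation by $g_i\in\GLt_{r_i}$ amounts to a twist by a character trivial on $\GLtt_{r_i}$---but the bookkeeping is a bit more involved than you indicate. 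In the global case your reduction to the local statement via $\pi_\omega=\otimestt'_v\pi_{\omega_v}$ is the right move, since $\A^\times/\A^{\times 2}$ is not finite and a direct Clifford-theory argument is unavailable; one then has to produce the global $\omega_i$ from the local $\omega_{i,v}$, which requires an automorphy check. If you want to see the details carried out, consult \cite{Takeda2} directly.
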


\quad


\section{\bf Exceptional representations of  $\GLt_r$}


In this section, we review the theory of the exceptional
representation of $\GLt_r$, a special case of which is the Weil
representation on $\GLt_2$. Throughout the section $\chi$ will denote
a unitary character on $F^\times$ when $F$ is local or a unitary Hecke
character on $\A^\times$ when it is global.


\subsection{\bf The Weil representation of $\GLt_2$}


First let us review the theory of the Weil representation of $\GLt_2$\\

\noindent\textbf{Local case:}

Let us consider the local case, and hence $F$ will be a (not
necessarily non-archimedean) local field of characteristic
$0$. Everything stated below without any
specific reference is found in \cite[\S 2]{GPS} for the
non-archimedean case and in \cite[\S 4]{G} for the archimedean case. Let
$S(F)$ be the space of Schwartz-Bruhat functions on $F$, \ie smooth
functions with compact support if $F$ is non-archimedean, and
functions with all the derivatives rapidly decreasing if $F$ is
archimedean. Let $\rr^\psi$ denote the representation of $\SLt_2(F)$
on $S(F)$ such that
\begin{align}
 \label{E:Weil_action1} \rr^\psi(\s\begin{pmatrix}0&1\\-1&0\end{pmatrix})f(x)&=\gamma(\psi)\hat{f}(x)\\
 \label{E:Weil_action2}\rr^\psi(\s\begin{pmatrix}1&b\\0&1\end{pmatrix})f(x)&
=\psi(bx^2)f(x),\qquad b\in F\\
 \label{E:Weil_action3}\rr^\psi(\s\begin{pmatrix}a&0\\0&a^{-1}\end{pmatrix})f(x)&
=|a|^{1/2}\mu_\psi(a)f(ax),\qquad a\in F^\times\\ 
 \label{E:Weil_action4} \rr^\psi(1,\xi)f(x)&=\xi f(x),
\end{align} 
where $\hat{f}(x)=\int f(y)\psi(2xy)\,dy$ with the Haar
measure $dy$ normalized in such a way that
$\hat{\hat{f}}(x)=f(-x)$. Also $\gamma(\psi)$ is the Weil
index of $\psi$, and $\mu_\psi(a)=\gamma(\psi_a)/\gamma(\psi)$. (See
\cite[Appendix]{Rao} for the notion of Weil index.)
It is well-known that $\rr^\psi$ is reducible and
written as $\rr^\psi=\rr^\psi_+\oplus\rr^\psi_-$, where $\rr^\psi_+$
(resp. $\rr^\psi_-$) is an irreducible representation realized in the
subspace of even functions (resp. odd functions) in $S(F)$. 

If $\chi(-1)=1$
(resp. $\chi(-1)=-1$), one can extend $\rr^\psi_+$
(resp. $\rr^\psi_-$) to a representation $\rr^\psi_\chi$ of
$\GLtt_2(F)$ by letting
\begin{equation}\label{E:Weil_action5}
\rr^\psi_\chi(\s\begin{pmatrix}1&0\\0&a^2\end{pmatrix})f(x)
=\chi(a)|a|^{-1/2}f(a^{-1}x).
\end{equation}
This is indeed a well-defined irreducible
representation of $\GLtt_2(F)$ and call it the Weil representation of
$\GLtt_2(F)$ associated with $\chi$. We denote by $S_\chi(F)$ the subspace of $S(F)$
in which $\rr_\chi^\psi$ is realized, which is the
space of even functions if $\chi(-1)=1$ and odd functions if
$\chi(-1)=-1$. Note that
\begin{equation}\label{E:central_character}
\rr^\psi_\chi(\s(\begin{pmatrix}a&0\\0&a\end{pmatrix})f(x)
=\chi(a)\mu_\psi(a)f(x).
\end{equation}

The Weil representation $\rr_\chi$ of $\GLt_2(F)$ is defined by
\[ 
\rr_\chi:=\Ind_{\GLtt_2(F)}^{\GLt_2(F)}\rr_\chi^\psi.
\] 
Then $\rr_\chi$ is irreducible and
independent of the choice of $\psi$, and hence our notation. 
If $\chi(-1)=1$, one can check that $\rr_\chi$ is the exceptional
representation of Kazhdan-Patterson for $r=2$ with the determinantal
character $\chi^{1/2}$, which will be recalled later.
If $\chi(-1)=-1$, then $\rr_\chi$ is described as follows: For
non-archimedean $F$, it is
supercuspidal (\cite[Proposition 3.3.3]{GPS}), for $F=\R$, it is a discrete
series representation of lowest weight $3/2$ (\cite[\S 6]{GPS}), and finally for $F=\C$, it is
identified with a certain induced representation (\cite[\S 6]{GPS}).\\

\noindent\textbf{Global case:}

We define the global Weil representation
$\rr_\chi$ of $\GLt_2(\A)$ as the restricted tensor product of the
local Weil representations, \ie
\[ 
\rr_\chi=\otimestt_v'\rr_{\chi_v}.
\] 
It is shown in \cite[\S8]{GPS} that $\rr_\chi$ is a square
integrable automorphic representation of $\GLt_2(\A)$, and moreover it
is cuspidal if and only if $\chi^{1/2}$ does not exist. Also one can
see that if $\chi^{1/2}$ exists, then just like the local case,
$\rr_\chi$ is the exceptional representation of Kazhdan-Patterson for
$r=2$, which will be explained later.\\


\subsection{\bf The Weil representation of $\MPt$}


Let us assume $r=2q$ and $P$ is the
$(2,\dots,2)$-parabolic $P_{2,\dots,2}$, so that
\[
M_P=\underbrace{\GL_2\times\cdots\times\GL_2}_{q \text{ times}}.
\]
Recall from Section \ref{S:metaplectic_cover} that  we write
\[
\MPt=\GLt_2\timest\cdots\timest\GLt_2.
\]
Since each element in the center $Z_{\GLt_{2q}}$ is of the form
$(a^2I_{2q},\xi)$, we have $Z_{\GLt_{2q}}\subseteq\MPtt$. Hence the
metaplectic tensor product of this Levi is unique. (In other words,
there is only once choice for $\omega$.)

We extend the theory of the Weil
representation both locally and globally as discussed in the previous subsection to the group
$\MPt$ by taking the metaplectic tensor product of $q$ copies of the Weil representation of
$\GLt_2$, and write
\begin{equation}\label{E:Weil_M_P}
\Pi_\chi:=(\rr_\chi\otimest\cdots \otimest\rr_\chi)_\omega,
\end{equation}
where $\omega$ is the unique choice for the character $Z_{\GLt_r}$
which is actually given by $(a^2I_{2q},\xi)\mapsto
\xi\chi(a^2)^q$. Also it should be mentioned that locally we have
\[
\Pi_\chi=\Ind_{\MPtt}^{\MPt}\rr_\chi^{\psi}\otimest\cdots
\otimest\rr_\chi^{\psi}.
\]
We call $\Pi_\chi$ the Weil representation of $\MPt$.\\


\subsection{\bf Non-twisted exceptional representation}


Let us now consider the non-twisted exceptional
representation of $\GLt_r$ developed by Kazhdan and Patterson in
\cite{KP}. We treat both $r=2q$ and
$2q+1$ at the same time. Also most of the time, we consider the
local and global cases at the same time, and all
the groups are over the local field $F$ (non-archimedean or
archimedean) or the adeles $\A$. 

For our character $\chi$, we let
\[
\Omega_\chi=(\underbrace{\chit\otimest\cdots\otimest\chit}_{r \text{ times}})_\omega,
\]
which is a representation of the metaplectic preimage $\Tt$ of maximal torus $T$. 
Note that $\Omega_\chi$ depends on $\omega$ if $r=2q+1$, but we
suppress it from our notation. For each $\nu\in\Phi_B(\C)$, let us define
\[
\Omega_\chi^\nu:=\Omega_\chi\otimes\exp(\nu, H_B(-))
\]
where $H_B(-)$ is the Harish-Chandra homomorphism as usual. Note that
$\exp(2\rho_B, H_B(-))=\delta_B$. Then it is shown in \cite{KP} that
the induced representation $\Ind_{\Bt}^{\GLt_r}\Omega_\chi^\nu$ has its
greatest singularity at $\nu=\rho_B/2$, and the quotient of
$\Ind_{\Bt}^{\GLt_r}\Omega_\chi^{\rho_B/2}=\Ind_{\Bt}^{\GLt_r}\Omega_\chi\otimes\delta_B^{1/4}$
is called the exceptional representation. Namely, we have

\begin{Prop} 
The induced representation $\Ind_{\Tt
  N_B^\ast}^{\GLt_r}\Omega_\chi\otimes\delta_B^{1/4}$ has a unique
irreducible quotient, which we denote by $\theta_\chi$. For the local
case, it is the image of the intertwining integral
\[ 
\Ind_{\Tt N_B^\ast}^{\GLt_r}\Omega_\chi\otimes\delta_B^{1/4}
\rightarrow \Ind_{\Tt
N_B^\ast}^{\GLt_r}\;^{w_0}(\Omega_\chi\otimes\delta_B^{1/4}),
\] 
where $w_0$ is the longest Weyl group element. For the global case,
it is generated by the residues of the Eisenstein series at
$\nu=\rho_B/2$ for the induced space
$\Ind_{\Bt}^{\GLt_r}\Omega_\chi^\nu$, and $\theta_\chi$ is a
square integrable automorphic
representation of $\GLt_r(\A)$. Moreover for the global $\theta_\chi$,
one has the decomposition
$\theta_\chi=\otimestt'_v\theta_{\chi_v}$. 
\end{Prop}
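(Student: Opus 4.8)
The plan is to separate the local and global assertions and, for the global case, to build $\theta_\chi$ from the residues of Eisenstein series along the lines of \cite{KP} (and \cite{Takeda1} for the twisted situation), being careful about the metaplectic cover. For the local statement, the first step is to recall (from \cite{KP} for the non-archimedean case, and the analogous archimedean analysis) that the Jacquet module computation shows the induced representation $\Ind_{\Tt N_B^\ast}^{\GLt_r}\Omega_\chi\otimes\delta_B^{1/4}$ has a unique irreducible quotient; the key input is that the ``greatest singularity'' of $\Ind_{\Bt}^{\GLt_r}\Omega_\chi^\nu$ along the line $\nu = t\rho_B$ occurs at $t = 1/2$, which forces the long intertwining operator $M(w_0,\nu)$ to have at most a simple pole there and its leading term to have one-dimensional image in each relevant generalized Jacquet module. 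One then identifies the unique irreducible quotient with the image of the (suitably normalized) intertwining integral $\Ind_{\Tt N_B^\ast}^{\GLt_r}\Omega_\chi\otimes\delta_B^{1/4}\to\Ind_{\Tt N_B^\ast}^{\GLt_r}\;{}^{w_0}(\Omega_\chi\otimes\delta_B^{1/4})$; irreducibility of this image is the Langlands-quotient-type argument, adapted to the cover using that $\s$ splits $N_B$ and $W$ behaves as in \eqref{E:kappa_and_s}.

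For the global case, I would first set up the Eisenstein series $E(g,\nu;f)$ attached to a flat section $f$ in $\Ind_{\Bt(\A)}^{\GLt_r(\A)}\Omega_\chi^\nu$, using the splitting of $\GL_r(F)$ in $\GLt_r(\A)$ via $\s$ (so that the sum over $B(F)\backslash\GL_r(F)$ makes sense on the cover) together with Lemma~\ref{L:section_s}. The constant-term computation expresses the constant term along $B$ as a sum of intertwining operators $M(w,\nu)$ over $w\in W$, and the analysis of the local operators just described shows that the only hyperplane of poles meeting the relevant region accumulates at $\nu=\rho_B/2$, where the pole is simple; taking residues there produces a nonzero automorphic form, and the span of these residues over all sections $f$ is the global $\theta_\chi$. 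Square-integrability follows from Jacquet's criterion: the exponents of the residual constant term along every parabolic are checked to lie strictly inside the obtuse Weyl chamber, exactly as in \cite{KP}, the only new point being bookkeeping of the genuine central character. Finally, the decomposition $\theta_\chi=\otimestt'_v\theta_{\chi_v}$ comes from the tensor-product description of genuine representations of $\GLt_r(\A)$ recalled in Section~\ref{S:group}: the global residual representation is irreducible (its local components are the local exceptional quotients, which are irreducible), so it is a metaplectic restricted tensor product, and matching constant terms place by place identifies the local factors with $\theta_{\chi_v}$.

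The main obstacle, and where I would spend the most care, is the precise order-of-pole analysis of the metaplectic intertwining operators $M(w,\nu)$ at $\nu=\rho_B/2$ — specifically, showing the pole of the long operator is exactly simple and that no ``extra'' singularities appear due to the even residue characteristic or archimedean places where $\s$ fails to split $W$. This is handled by reducing to rank-one operators via the cocycle decomposition $M(w,\nu)=\prod M(w_\alpha,\cdot)$ (valid once one uses $\tau_r$ and the block-compatibility surrogate $\tau_P$ of \eqref{E:compatibility_global}), and then invoking the known rank-one computation, essentially \cite[Proposition 7.3]{BG} on $\GLt_2$, which the paper explicitly allows itself to cite. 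Everything else — splitting properties, the Jacquet-module bookkeeping, Jacquet's square-integrability criterion — is routine once the rank-one poles are pinned down.
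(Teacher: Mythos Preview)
Your sketch is a reasonable outline of the Kazhdan--Patterson argument, but the paper does not reprove this proposition at all: its proof consists of the single sentence ``See \cite[Theorem I.2.9]{KP} for the local statement and \cite[Theorem II.2.1]{KP} for the global one.'' So there is no comparison of approaches to make --- the paper simply cites the result, whereas you have outlined how one would actually establish it.

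One small correction to your outline: the rank-one input you need for the order-of-pole analysis of $M(w,\nu)$ at $\nu=\rho_B/2$ is not \cite[Proposition~7.3]{BG} (which concerns the action of the $\GLt_2$ intertwining operator at $s=0$ being $-\Id$, used later in the paper for the cancellation at $s_0=\tfrac14-\tfrac1{2r}$), but rather the $\GLt_2$ pole computation already in \cite{KP} itself. Otherwise your plan --- rank-one factorization of the long intertwining operator, Langlands-quotient-type identification of the image, constant-term/residue construction globally, Jacquet's criterion for square-integrability, and the tensor-product theorem for the local-global decomposition --- is exactly the content of the cited KP theorems.
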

\begin{proof}
See \cite[Theorem I.2.9]{KP} for the local statement and \cite[Theorem
II.2.1]{KP} for the global one.
\end{proof}

We call the representation $\theta_\chi$ \emph{the non-twisted exceptional
representation} of $\GLt_r$ with the \emph{determinantal character}
$\chi$. It should be mentioned that if $r=2$,
$\theta_\chi$ is isomorphic to the Weil representation
$\rr_{\chi^2}$. Note that just as $\Omega_\chi$, $\theta_\chi$ depends on
$\omega$, but we suppress it from our notation.\\

Let us mention that a small discrepancy between the exceptional
representation defined above and the one in \cite{Takeda1} which is
defined as follows.
First for the maximal torus $T\subseteq B$, we let 
\begin{equation}\label{E:T^e}
T^{\e}=\{\begin{pmatrix}t_1&&\\&\ddots&\\&&t_r\end{pmatrix}\in T:
      t_1t_2^{-1},
      t_3t_4^{-1},\dots,t_{2q-1}t_{2q}^{-1}\text{ are squares}
\}.
\end{equation}
The metaplectic preimage $\Tte$ of $T^{\e}$ is a maximal
abelian subgroup of $\Tt$. Then in \cite{Takeda1} the non-twisted exceptional
representation of $\GLt_r$ was defined to be the unique
irreducible quotient of the induced representation $\Ind_{\Tte
N_B^\ast}^{\GLt_r}\omega_\chi^\psi\otimes\delta_B^{1/4}$, where
$\omega_\chi^\psi$ is the character on $\Tte$ defined by
\begin{equation}\label{E:exceptional_character}
    {\omega_{\chi}^{\psi^a}}((1,\xi)\s(t))=\xi\chi(\det
    t)\mu_{\psi}(t_2)\mu_\psi(t_4)\mu_\psi(t_6)\cdots \mu_\psi(t_{2q}),
\end{equation}
where $\mu_\psi$ is the ratio of the Weil indices.
(Note that even when
$F$ is global, the section $\s$ is defined on $T_\A$ and the
expression $\s(t)$ makes sense.) However the exceptional
representation defined this way coincides with the above $\theta_\chi$
with a certain choice of $\omega$. To see this, let us first assume
that $F$ is local, and define
\[
\Omega_\chi^{\psi}:=\Ind_{\Tte}^{\Tt}\omega_\chi^{\psi}.
\]
This is irreducible (\cite[p.55]{KP}). Indeed
\[
\Omega_\chi^{\psi}=(\chit\otimest\cdots\otimest\chit)_\omega,
\]
where each $\chit$
is the non-genuine character on $\GLt_1$ defined by $(a,\xi)\mapsto
\xi\chi(a)$ and the character $\omega$ on the center $Z_{\GLt_r}$ is
given by 
\[
\omega(aI_r,\xi)=\xi\chi^r(a)\mu_\psi(a)^q.
\] 
By inducing in stages, one can see that
\[
    \Ind_{\Tte N_B^\ast}^{\GLt_r}{\omega_\chi^{\psi}}\otimes\delta_B^{1/4}
    =\Ind_{\Bt}^{\GLt_r}{\Omega_\chi^{\psi}}\otimes\delta_B^{1/4},
\]
which implies that the non-twisted exceptional representation in
\cite{Takeda1} is precisely our $\theta_\chi$ with the above chosen
$\omega$. Now if $F$ is global, we can define
$\Omega_\chi^{\psi}$ to be the global metaplectic tensor product
$(\chit\otimest\cdots\otimest\chit)_\omega$ with $\omega$ chosen in
the same way as the local case, and hence the global exceptional
representation $\theta_{\chi}$ is obtained as the quotient of the
global induced representation
$\Ind_{\Bt(\A)}^{\GLt_r(\A)}{\Omega_\chi^{\psi}}\otimes\delta_B^{1/4}$,
and we have
$\theta_\chi=\otimestt'_v\theta_{\chi_v}$, which again coincides with
the global non-twisted exceptional representation in \cite{Takeda1}.

\begin{Rmk}\label{R:dependence_on_psi}
It is important to note that the above discussion shows that in
\cite{Takeda1} only one particular central character $\omega$ was used, which depends on
the additive character $\psi$ chosen. (But it is shown in
\cite{Takeda1} that after all it depends on $\psi$ only when both $r$
and $q$ are odd.) In this paper, however, we always assume $\omega$ is
arbitrary. Indeed, it is crucial to do so when we compute the poles of
our Eisenstein series as we will see later. Nonetheless, it should be also
mentioned that to obtain the Rankin-Selberg integral of the
$L$-function, it is necessary to choose the particular $\omega$ as
above.
\end{Rmk}


\subsection{\bf Twisted exceptional representation}


Next we consider the twisted version of the exceptional
representation of $\GLt_r$ when $r=2q$. The local case was originally
constructed by the Ph.D thesis by Banks \cite{Banks} when the residue
characteristic is odd, and the other cases are taken care of in
\cite{Takeda1}. 

Let $P$ be the $(2,\dots,2)$-parabolic whose Levi $M_P$ is $\GL_2\times\cdots\times\GL_2$
($q$-times), and $\Pi_\chi$ the Weil representation of $\MPt$ as in
\eqref{E:Weil_M_P}. For each $\nu\in\Phi_P(\C)$, let us define
\[
\Pi_\chi^\nu:=\Pi_\chi\otimes\exp(\nu, H_P(-))
\]
where $H_P(-)$ is the Harish-Chandra homomorphism.  Analogously to the non-twisted
exceptional representation of \cite{KP}, 
the induced representation $\Ind_{\Pt}^{\GLt_r}\Pi_\chi^\nu$ has its
greatest singularity at $\nu=\rho_P/2$, and the quotient of
$\Ind_{\Pt}^{\GLt_r}\Pi_\chi^{\rho_P/2}=\Ind_{\Pt}^{\GLt_r}\Pi_\chi\otimes\delta_P^{1/4}$
is called the twisted exceptional representation. Namely, we have
\begin{Prop}
The induced representation $\Ind_{\Pt}^{\GLt_{2q}}\Pi_\chi\otimes\delta_P^{1/4}$ has a
unique irreducible quotient, which we denote by $\vartheta_\chi$. For
the local case, it is the image of the intertwining integral
\[
\Ind_{\Pt}^{\GLt_{2q}}\Pi_\chi\otimes\delta_P^{1/4}\rightarrow
\Ind_{\Pt}^{\GLt_{2q}}{^{w_0}(\Pi_\chi\otimes\delta_P^{1/4})},
\]
where $w_0$ is the longest Weyl group element relative to $P$. For the
global case, it is generated by the residues of the Eisenstein series
at $\nu=\rho_P/2$ for the induced space
$\Ind_{\Pt}^{\GLt_r}\Pi_\chi^\nu$, and $\vartheta_\chi$ is a square integrable
automorphic representation of $\GLt_{2q}(\A)$. Moreover for the global
$\vartheta_\chi$, one has the decomposition
$\vartheta_\chi=\otimestt_v'\vartheta_{\chi_v}$.
\end{Prop}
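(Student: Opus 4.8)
The plan is to mimic the Kazhdan--Patterson construction of the non-twisted exceptional representation, now with the Weil representation $\Pi_\chi$ of $\MPt$ playing the role of the genuine character $\Omega_\chi$ on $\Tt$. First I would establish the local statement. The key point is that, just as in \cite{KP}, the family $\nu\mapsto\Ind_{\Pt}^{\GLt_{2q}}\Pi_\chi^\nu$ of induced representations has a ``point of greatest singularity'' at $\nu=\rho_P/2$; to see this one reduces, by induction in stages through the maximal parabolics containing $P$, to the rank-one computations of intertwining operators for $\GLt_2$-data, which are governed by the Weil representation and are already understood (this is precisely where \cite[Proposition 7.3]{BG} on $\GLt_2$ enters). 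The standard intertwining integral $M(w_0,\nu)$ from $\Ind_{\Pt}^{\GLt_{2q}}\Pi_\chi^\nu$ to $\Ind_{\Pt}^{\GLt_{2q}}{}^{w_0}\Pi_\chi^\nu$, suitably normalized, is then holomorphic and nonzero at $\nu=\rho_P/2$, and its image at that point is irreducible; this irreducibility, together with the fact that the image is a quotient, gives the existence and uniqueness of $\vartheta_\chi$.

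For the uniqueness of the irreducible quotient I would argue as in \cite[Theorem I.2.9]{KP}: any irreducible quotient of $\Ind_{\Pt}^{\GLt_{2q}}\Pi_\chi\otimes\delta_P^{1/4}$ is, by Frobenius reciprocity, a sub of $\Ind_{\Pt}^{\GLt_{2q}}{}^w(\Pi_\chi\otimes\delta_P^{1/4})$ for some Weyl element $w$ representing a double coset; a Jacquet-module / exponent computation, using that $\Pi_\chi$ restricted to the torus is built out of the $\mu_\psi$-twisted characters $\omega_\chi^\psi$ from \eqref{E:exceptional_character}, shows that only $w=w_0$ can contribute, pinning down the quotient. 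A convenient alternative is to induce in stages from the Borel: since $\Pi_\chi\otimes\delta_P^{1/4}$ is itself the exceptional quotient of a principal series of $\MPt$, one has $\Ind_{\Pt}^{\GLt_{2q}}\Pi_\chi\otimes\delta_P^{1/4}$ sitting as a quotient of $\Ind_{\Bt}^{\GLt_{2q}}\Omega_\chi^\psi\otimes\delta_B^{1/4}$, so $\vartheta_\chi$ is forced to be the same as the non-twisted $\theta_\chi$ (with the appropriate $\omega$), and one may simply quote the already-stated proposition; but since the point here is the twisted construction with arbitrary $\omega$, I would present the direct argument.

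For the global statement, the mechanism is Langlands' theory of Eisenstein series: form $E(g,\nu;f^\nu)$ for $f^\nu\in\Ind_{\Pt(\A)}^{\GLt_{2q}(\A)}\Pi_\chi^\nu$, and analyze its constant term along $P$, which is a sum over $W_{M_P}\backslash W/W_{M_P}$ of intertwining operators $M(w,\nu)f^\nu$. The location of the leading pole is read off from the poles of these operators, which factor into the rank-one pieces analyzed locally; the deepest pole occurs at $\nu=\rho_P/2$ and is simple there because $\Pi_\chi$ is the exceptional (smallest) representation of the Levi. Taking residues at $\nu=\rho_P/2$ produces a nonzero automorphic realization; squareintegrability follows from Langlands' square-integrability criterion applied to the exponents of the residual constant term (they are strictly inside the negative Weyl chamber, by the same exponent computation used locally), exactly as in \cite[Theorem II.2.1]{KP} and as for $\theta_\chi$. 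Finally, irreducibility of the residual representation together with the local uniqueness gives the decomposition $\vartheta_\chi=\otimestt_v'\vartheta_{\chi_v}$, via the metaplectic tensor product theorem recalled in Section \ref{S:group}.

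I expect the main obstacle to be the holomorphy-and-nonvanishing analysis of the normalized intertwining operator at $\nu=\rho_P/2$ in the metaplectic setting: one must track the Weil-index factors $\mu_\psi$ carefully through the rank-one reductions and confirm that the normalizing $L$-factors cancel the poles coming from all $w\neq w_0$ while leaving the $w_0$-term holomorphic and nonzero. This is where the metaplectic double cover genuinely differs from the linear case, and where the $\GLt_2$ input of \cite[Proposition 7.3]{BG} is indispensable; everything else is a faithful adaptation of \cite{KP}.
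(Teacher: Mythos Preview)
The paper does not give an argument here; it simply cites \cite[Proposition 2.35]{Takeda1} for the local statement and \cite[Theorem 2.33]{Takeda1} for the global one. Your overall strategy---adapting the Kazhdan--Patterson machinery with $\Pi_\chi$ in place of $\Omega_\chi$ and reading off the residual representation from the constant term via Langlands' criterion---is indeed the approach carried out in \cite{Takeda1}, so in spirit you are on the right track.

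That said, two points in your sketch need correction. First, your ``convenient alternative'' of inducing in stages from the Borel is not available in the genuinely twisted case: when $\chi^{1/2}$ does not exist, the local Weil representation $\rr_\chi$ is supercuspidal (non-archimedean) or a discrete series (real), and hence $\Pi_\chi$ is \emph{not} a quotient of any principal series of $\MPt$. The obstruction is therefore not the arbitrariness of $\omega$, as you suggest, but the cuspidality of the inducing data; indeed $\vartheta_\chi=\theta_{\chi^{1/2}}$ only when $\chi^{1/2}$ exists, as the paper notes just after this proposition. Second, the rank-one input you identify is misplaced: \cite[Proposition 7.3]{BG} concerns the principal-series intertwining operator on $\GLt_2$ (it computes $A(0,\theta_{\chi,\eta},w_1)=-\Id$) and is used in this paper only for the cancellation-of-poles argument in Section~5. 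The rank-one reduction relevant to constructing $\vartheta_\chi$ is instead the intertwining operator on $\GLt_4$ for the $(2,2)$-parabolic with inducing data $\rr_\chi\,\otimest\,\rr_\chi$; this is what is analyzed in \cite{Takeda1} (and \cite{Banks} in the odd-residue case), not the $\GLt_2$ principal series computation.
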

\begin{proof}
See \cite[Proposition 2.35]{Takeda1} for the local statement and
\cite[Theorem 2.33]{Takeda1} for the global statement.
\end{proof}

\quad

We call $\vartheta_\chi$ \emph{the twisted exceptional representation}
of $\GLt_{2q}$. Both locally and globally, if $\chi^{1/2}$ exists, one can show that
\[
\vartheta_\chi=\theta_{\chi^{1/2}}.
\]
This is because the Weil representation
$\rr_\chi$ is the non-twisted exceptional representation of $\GLt_2$
with the determinantal character $\chi^{1/2}$.

\begin{Rmk}\label{R:dependene_on_psi2}
Let us note that unlike the case $r=2q+1$, there is no choice for the
central character $\omega$ for constructing the metaplectic tensor
product $\Pi_\chi$ and hence $\vartheta_\chi$ depends only on
$\chi$. Accordingly there is no discrepancy between $\vartheta_\chi$
here and the one in \cite{Takeda1}. 
\end{Rmk}

\quad


\section{\bf Induced representations and intertwining
  operators}\label{S:normalized_intertwining}


Let 
\[
Q=P_{r-1,1}=(\GL_{r-1}\times\GL_1)N_Q
\]
be the standard $(r-1,1)$-parabolic of $\GL_r$, so the Levi
part is $\GL_{r-1}\times\GL_1$. The inducing data for the Eisenstein
series we consider in this paper is a residual representation on the
parabolic $\Qt$. In this section, we first define the inducing
representation, which we called the exceptional representation
of $\GLt_{r-1}\timest\GLt_1$ in \cite{Takeda1}. This representation is
the metaplectic tensor product of the exceptional representation
$\theta_\chi$ or $\vartheta_\chi$ of $\GLt_{r-1}$ and  a character on
$\GLt_1$. (The precise construction differs, depending on the parity
of $r$.) Then we will examine the
analytic behavior of the intertwining operators on this induced representation. The main object
of this section is to prove Theorem \ref{T:normalized_intertwining}.

\quad


\subsection{\bf The inducing representation for $r=2q$}


In this subsection we assume $r=2q$ and $F$ can be both local and global,
and for example the group $\GLt_r$ denotes
both $\GLt_r(F)$ ($F$ local) and $\GLt_r(\A)$ ($F$ global). 
Let $\theta_\chi$ be the
non-twisted exceptional representation of $\GLt_{r-1}$ with the
determinantal character $\chi$. For a character $\eta$ on $\GL_1$,
define $\etat:\GLt_1\rightarrow\{\pm 1\}$ to
be the character defined by $\etat(a,\xi)\mapsto\xi\eta(a)$ for
$(a,\xi)\in\GLt_1$. We let
\[
\theta_{\chi,\eta}:=(\theta_\chi\;\otimest\;\etat)_\omega
\]
\ie the metaplectic tensor product of $\theta_\chi$ and
$\etat$. Note that since $Z_{\GLt_{2q}}\subseteq\MQtt$, there is no
actual choice for the character $\omega$. 

It should be mentioned that even when $r=2q+1$, one can define
$\theta_\chi$, (which is equal to $\vartheta_{\chi^2}$) and hence can
define $\theta_{\chi,\eta}$, though most of the time we use the
representation $\theta_{\chi,\eta}$ for the case $r=2q$.

Let us mention that what we denoted by $\theta_{\chi,\eta}$ in
\cite{Takeda1} corresponds to what we mean by $\theta_{\chi,\chi\eta}$
in this paper. The reason is because at the time we wrote
\cite{Takeda1} we did not know how to formulate the global metaplectic
tensor product and as a result we constructed the representation
$\theta_{\chi,\eta}$ more directly as the unique irreducible quotient
of an induced representation. But now that we have developed in
\cite{Takeda2} the
theory of global metaplectic tesnor products, which includes the compatibility with
parabolic inductions (Proposition \ref{P:tensor_parabolic}), one can see that the
construction in \cite{Takeda1} is indeed the same as the one
above. Namely the representation $\theta_{\chi,\eta}$ is, locally or
globally, a unique irreducible quotient of
\[
\Ind_{\Bt^{r-1,1}}^{\MQt}(\chit\otimest\cdots\otimest\chit\otimest\etat)_\omega\otimes
\delta_{B^{r-1,1}}^{1/4},
\]
where $B^{r-1,1}$ is the Borel subgroup of $M_Q=\GL_{r-1}\times\GL_1$,
namely $B^{r-1,1}=M_Q\cap B$.


\subsection{\bf The inducing representation for $r=2q+1$}


Next we will consider the case $r=2q+1$. Also keep the notation for $F$
from the previous subsection, namely $F$ is either local or
global. Let $\etat$ be as before and
$\vartheta_\chi$ the twisted exceptional representation of
$\GLt_{2q}$, where we include the case $\chi^{1/2}$ exists. Then we define
\[
\vartheta_{\chi,\eta}:=(\vartheta_\chi\;\otimest\;\etat)_\omega.
\]
Note that if $\chi^{1/2}$
exists, we have $\vartheta_{\chi, \eta}=\theta_{\chi^{1/2},\eta}$.

\begin{Rmk}
Let us mention again that in \cite{Takeda1} a particular central
character $\omega$ is chosen. Indeed, we used
\[
\omega: (1,\xi)\s_Q(aI_r)\mapsto\xi\chi(a)^q\eta(a)\mu_\psi(a)^q,
\]
which depends on $\psi$ if (and only if) $q$ is odd. However in this paper, $\omega$ is
always arbitrary.
\end{Rmk}

Just like the case for $r=2q$, the compatibility with parabolic induction
for metaplectic tensor products (Proposition \ref{P:tensor_parabolic})
implies that $\vartheta_{\chi, \eta}$
is a unique irreducible quotient of 
\[
\Ind_{\Pt^{r-1,1}_{2,\dots,2,1}}^{\MQt}(\rr_\chi\otimest\cdots\otimest\rr_\chi\otimest\etat)_\omega
\otimes\delta_{P^{r-1,1}_{2,\dots,2,1}}^{1/4},
\]
where $P^{r-1,1}_{2,\dots,2,1}$ is the $(2,\dots,2,1)$-parabolic
subgroup of $M_Q$, so the Levi part is
$\GL_2\times\cdots\times\GL_2\times\GL_1$.


\subsection{\bf The intertwining operator and its analytic behavior}


Let $\theta=\theta_{\chi,\eta}$ or $\vartheta_{\chi,\eta}$ depending
on the parity of $r$ and assume $F$ is global. Define
\begin{equation}\label{E:w_1}
w_1=\begin{cases}
\begin{pmatrix}&&1\\&I_{r-2}&\\1&&\end{pmatrix},
&\text{if $r=2q$};\\
\begin{pmatrix}&&&&1\\&&&1&\\&&I_{r-4}&&\\&1&&&\\1&&&&\end{pmatrix},&\text{if $r=2q+1$}.
\end{cases}
\end{equation}
In the rest of the section, we will consider the analytic behavior of
the global intertwining operator
\[
A(s,\theta,w_1):\Ind_{\Qt(\A)}^{\GLt_r(\A)}\theta\otimes\delta_Q^s\rightarrow
\Ind_{^{w_1}(\Mt_Q(\A))N_{1,r-1}(\A)}^{\GLt_r(\A)}{^{w_1}\theta}\otimes\delta_Q^{-s},
\]
and will show
\begin{Thm}\label{T:normalized_intertwining}
Let us exclude the case that $r=2$  and $\chi^2\eta^{-2}=1$. Then
for $\Re(s)\geq 0$, the above intertwining operator $A(s,\theta,w_1)$ is
holomorphic except when the complete $L$-function
$L(r(2s+\frac{1}{2})-r+1,\chi^2\eta^{-2})$  (if
$r=2q$) or $L(r(2s+\frac{1}{2})-r+1,\chi\eta^{-2})$ (if
$r=2q+1$) has a pole; In other words,  if $r=2q$,
it has a possible pole if and only if $\chi^2\eta^{-2}=1$ and
$s\in\{\frac{1}{4}, \frac{1}{4}-\frac{1}{2r}\}$, and if $r=2q+1$,
it has a possible pole if and only if $\chi\eta^{-2}=1$ and
$s\in\{\frac{1}{4}, \frac{1}{4}-\frac{1}{2r}\}$.

Further if $f^s=\otimes'f^s_v$ is a factorizable section and $S$ is a finite
set of places which contains all the archimedean places and all the
non-archimedean places $v$ at which $f_v^s$ is not spherical. Then the
normalized intertwining operator
\[
A^\ast(s,\theta,w_1)f^s:=\begin{cases}
L^S(r(2s+\frac{1}{2}),\chi^2\eta^{-2})A(s,\theta, w_1)f^s,\quad\text{if
  $r=2q$}\\
L^S(r(2s+\frac{1}{2}),\chi\eta^{-2})A(s,\theta, w_1)f^s,\quad\text{if
  $r=2q+1$},
\end{cases}
\]
is holomorphic for all $s\in\C$ except when the complete
$L$-function $L(r(2s+\frac{1}{2})-r+1,\chi^2\eta^{-2})$
(resp. $L(r(2s+\frac{1}{2})-r+1,\chi\eta^{-2})$) has a pole.
\end{Thm}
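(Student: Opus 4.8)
\emph{Overall strategy.} The plan is to reduce everything to local computations: factor the global operator over places, do an explicit metaplectic Gindikin--Karpelevich computation at the unramified places, and analyze the finitely many local operators at the places of $S$. First, since $\theta=\otimestt'_v\theta_v$ (with $\theta_v=\theta_{\chi_v,\eta_v}$ or $\vartheta_{\chi_v,\eta_v}$) and $f^s=\otimes'_vf^s_v$, on factorizable sections $A(s,\theta,w_1)f^s=\bigotimes_vA(s,\theta_v,w_1)f^s_v$, each factor being the intertwining integral over $N_{w_1}(F_v)$, convergent for $\Re(s)\gg 0$. To continue and estimate each local operator I would use that $\theta_v$ is itself the image of a ``long'' intertwining operator on the Levi: for $r=2q$ it is the unique irreducible quotient of $\Ind_{\Bt^{r-1,1}}^{\MQt}(\chit_v\otimest\cdots\otimest\chit_v\otimest\etat_v)_\omega\otimes\delta_{B^{r-1,1}}^{1/4}$, and for $r=2q+1$ of the analogous representation induced from the $(2,\dots,2,1)$-parabolic of $M_Q$ built from Weil representations of $\GLt_2$. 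Hence $\Ind_{\Qt(F_v)}^{\GLt_r(F_v)}\theta_v\otimes\delta_Q^s$ is a subquotient of a full genuine principal series of $\GLt_r(F_v)$, the operator $A(s,\theta_v,w_1)$ descends from a principal-series intertwining operator, and the latter factors into rank-one operators attached to simple reflections, each living on a $\GLt_2(F_v)$-situation. This gives the meromorphic continuation and reduces matters to $\GLt_2$-data, where the explicit formulae for the Weil representation (including \cite[Proposition 7.3]{BG}) are available.

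\emph{The unramified places.} For $v\notin S$ I would compute, on normalized spherical vectors, $A(s,\theta_v,w_1)$ as $c_v(s)$ times the normalized spherical vector in the target, using the metaplectic Gindikin--Karpelevich formula, in which the rank-one factor for a simple reflection on the double cover carries the residue parameter ``doubled'' (of shape $\frac{1-q_v^{-1}x^2}{1-x^2}$ rather than $\frac{1-q_v^{-1}x}{1-x}$), as in \cite[Proposition 7.3]{BG}. The factors indexed by the roots of $M_Q$ are absorbed into the definition of $\theta_v$, and exactly one factor survives; tracking the $\delta_B^{1/4}$-shift defining the exceptional representation, the $\delta_Q^s$-twist, and the metaplectic doubling gives
\[
c_v(s)=\frac{L_v\!\big(r(2s+\tfrac12)-r+1,\ \mu_v\big)}{L_v\!\big(r(2s+\tfrac12),\ \mu_v\big)},
\qquad
\mu=\begin{cases}\chi^2\eta^{-2},&r=2q,\\[1mm]\chi\eta^{-2},&r=2q+1,\end{cases}
\]
and hence $\prod_{v\notin S}c_v(s)=L^S(r(2s+\tfrac12)-r+1,\mu)/L^S(r(2s+\tfrac12),\mu)$.

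\emph{The places of $S$ and assembly.} The core is to show that for each $v\in S$ there is a factorization $A(s,\theta_v,w_1)=\frac{L_v(r(2s+\tfrac12)-r+1,\mu_v)}{L_v(r(2s+\tfrac12),\mu_v)}\,A^{\natural}(s,\theta_v,w_1)$ with $A^{\natural}(s,\theta_v,w_1)$ entire in $s$; at archimedean $v$ the local $L$-factor is the corresponding $\Gamma_{\R}$-quotient, and it is this numerator $\Gamma_\R$-factor that produces the possible pole at $s=\tfrac14-\tfrac1{2r}$ upon passing to completed $L$-functions. This I would deduce from the rank-one decomposition together with the $\GLt_2$ formulae, tracking how the poles and zeros of the rank-one factors combine and cancel against the $M_Q$-root factors, and using the metaplectic tensor-product results (Propositions \ref{P:tensor_cuspidal}--\ref{P:tensor_unique}, in particular Proposition \ref{P:tensor_Weyl} to identify ${}^{w_1}\theta_v$). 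Combining with the unramified computation, $A(s,\theta,w_1)=\frac{L(r(2s+\tfrac12)-r+1,\mu)}{L(r(2s+\tfrac12),\mu)}\prod_vA^{\natural}(s,\theta_v,w_1)$ as completed $L$-functions. For $\Re(s)\ge 0$ with $(r,\mu)\neq(2,\mathbf 1)$, the denominator argument has real part $\ge r/2\ge 1$, a region where $L(-,\mu)$ has no zeros and (for $\mu=\mathbf 1$) no poles (its poles lying at $s=\tfrac1{2r}-\tfrac14\le 0$ and $s=-\tfrac14$), so $1/L(r(2s+\tfrac12),\mu)$ is holomorphic there and the poles of $A(s,\theta,w_1)$ in $\Re(s)\ge 0$ are confined to those of the numerator $L(r(2s+\tfrac12)-r+1,\mu)$ --- which is entire for $\mu\neq\mathbf 1$ and, for $\mu=\mathbf 1$, is the completed Dedekind zeta function of $F$ in the variable $r(2s+\tfrac12)-r+1$, with simple poles (at argument $0$ and $1$) exactly at $s\in\{\tfrac14,\ \tfrac14-\tfrac1{2r}\}$. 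Finally $A^{\ast}(s,\theta,w_1)f^s=L^S(r(2s+\tfrac12),\mu)A(s,\theta,w_1)f^s=L(r(2s+\tfrac12)-r+1,\mu)\prod_{v\in S}\frac{A^{\natural}(s,\theta_v,w_1)}{L_v(r(2s+\tfrac12),\mu_v)}$, and since each $1/L_v$ and each $A^{\natural}$ is entire, $A^{\ast}$ is holomorphic for all $s\in\C$ except possibly where the completed $L(r(2s+\tfrac12)-r+1,\mu)$ has a pole. (The excluded case $r=2$, $\chi^2\eta^{-2}=1$ is precisely where the $\GLt_2$ analysis produces a genuine pole on $\Re(s)=0$; that is classical Kubota theory and is treated separately.)

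\emph{Main obstacle.} The hard part is the local analysis at the ramified, dyadic, and archimedean places in $S$: over the metaplectic double cover there is no convenient maximal compact and no unramified reduction, so establishing the precise $L$-factor structure of $A(s,\theta_v,w_1)$ --- especially the archimedean $\Gamma_\R$-factor responsible for the pole at $s=\tfrac14-\tfrac1{2r}$ --- and the entirety of $A^{\natural}(s,\theta_v,w_1)$ requires the self-contained rank-one machinery on $\GLt_2(F_v)$ and the metaplectic tensor-product theory developed earlier in the paper.
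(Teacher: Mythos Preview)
Your reduction to a local problem and the unramified Gindikin--Karpelevich computation match the paper exactly (this is Lemma~\ref{L:spherical_section} and the decomposition just after it). The divergence is entirely in how you handle the finitely many places in $S$.

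You propose to embed $\theta_v$ into a full principal series, factor $A(s,\theta_v,w_1)$ into rank-one $\GLt_2$ intertwining operators, and then track how their individual $L$-quotients combine to yield the single ratio $L_v(r(2s+\tfrac12)-r+1,\mu_v)/L_v(r(2s+\tfrac12),\mu_v)$ times an entire $A^\natural$. This is \emph{not} what the paper does, and the paper's author explicitly notes that an approach of this flavor (the asymptotic matrix-coefficient argument of \cite{BG}) cannot be justified at the archimedean place. The difficulty is precisely the one you flag in your ``Main obstacle'': the element $w_1$ has length $2r-3$ in $W$, so the rank-one factorization produces many factors whose poles depend on the degenerate exceptional-character data $\chi\delta_B^{1/4}$; arguing that these cancel down to one surviving $L$-ratio, with an \emph{entire} remainder, is exactly the unproven step. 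On reducible intermediate representations the composite of rank-one operators can acquire or lose poles in ways that are hard to control without further input.

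The paper sidesteps this entirely via the Rallis lemma (Lemma~\ref{L:Rallis}): the highest pole of $A_v(s,\theta_v,w_1)$ is already achieved by $A_v(s,\theta_v,w_1)f^s(\s(w_0))$ with $f^s$ supported on the big cell $\Qt w_0\Qt$. One then writes $w_1 n w_0$ explicitly in Bruhat form, so the intertwining integral unfolds to an integral over $F^\times\times F^{r-2}$; the compact support of big-cell sections (Lemma~\ref{L:Rallis2}) reduces this to a single-variable Tate integral $\int_{F^\times}|y|^{2rs-\frac{r}{2}+1}\mu(y)\phi(y)\,d^\times y$ (after the change $x=y^2$), which by Tate's thesis is $L_v(r(2s+\tfrac12)-r+1,\mu_v)$ times an entire function. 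This gives directly that $L_v(r(2s+\tfrac12)-r+1,\mu_v)^{-1}A_v(s,\theta_v,w_1)$ is entire --- which is all that is needed, and is weaker than your claimed full factorization with entire $A^\natural$. The approach is due to Jiang \cite{Jiang} and works uniformly at all bad places, including archimedean and dyadic ones, without any rank-one bookkeeping.
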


The rest of the section is devoted to the proof of this theorem, which,
as we will see, boils down to determining the possible
poles of the local intertwining operator.


\subsection{\bf Unramified place}


To prove the above theorem, we first need the following result on the
unramified place.
\begin{Lem}\label{L:spherical_section}
Let $r=2q$ or $2q+1$. Also assume $F$ is a non-archimedean local
field of odd residue characteristic. Further assume that $\chi$,
$\eta$ and $\omega$ are all unramified. Consider the intertwining operators
\begin{align*}
    &A(s,\theta_{\chi,\eta},w_1):
    \Ind_{\Qt}^{\GLt_{2q}}\theta_{\chi,\eta}\otimes\delta_Q^s\rightarrow 
\Ind_{^{w_1}\MQt N_{1,r-1}^\ast}^{\GLt_{2q}}\;^{w_1}(\theta_{\chi,\eta})\otimes\delta_Q^{-s},
\quad (r=2q)\\
&A(s,\vartheta_{\chi,\eta},w_1):
    \Ind_{\Qt}^{\GLt_{2q+1}}{\vartheta_{\chi,\eta}}\otimes\delta_Q^s\rightarrow 
\Ind_{^{w_1}\MQt N_{1,r-1}^\ast}^{\GLt_{2q+1}}\;^{w_1}({\vartheta_{\chi,\eta}})\otimes\delta_Q^{-s},
\quad (r=2q+1),
\end{align*}
where $w_1$ is as in (\ref{E:w_1}).

If $f_0^s\in\Ind_{\Qt}^{\GLt_{2q}}{\theta_{\chi,\eta}\otimes\delta_Q^s}$ (or
$\Ind_{\Qt}^{\GLt_{2q+1}}{\vartheta_{\chi,\eta}}\otimes\delta_Q^s $) is the
spherical section such that $f_0^s(1)=1$, then
\begin{align}
 \label{E:normalizing_factor_even}   A(s,\theta_{\chi, \eta},w_1)f_0^s(1)&=\frac{L(r(2s+\frac{1}{2})-r+1,
      \chi^2\eta^{-2})}{L(r(2s+\frac{1}{2}),
      \chi^2\eta^{-2})},\quad(r=2q);\\
\label{E:normalizing_factor_odd}    A(s,\vartheta_{\chi, \eta},w_1)f_0^s(1)&=\frac{L(r(2s+\frac{1}{2})-r+1,
      \chi\eta^{-2})}{L(r(2s+\frac{1}{2}),
      \chi\eta^{-2})},\quad(r=2q+1).
\end{align}
\end{Lem}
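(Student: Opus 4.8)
The plan is to reduce everything to a rank-one intertwining integral calculation via the Gindikin--Karpelevich method, adapted to the metaplectic setting. First I would use the fact that $\theta_{\chi,\eta}$ (resp. $\vartheta_{\chi,\eta}$) is the unique irreducible spherical quotient of an induced-from-Borel representation; by Proposition \ref{P:tensor_parabolic} the section $f_0^s$ may be pulled back to a spherical section in a principal series $\Ind_{\Bt}^{\GLt_r}\Omega_\chi^{\psi}\otimes\delta_B^{1/4}\otimes\delta_Q^s$ of $\GLt_r$ (with the correct twists on the torus), so that $A(s,\theta_{\chi,\eta},w_1)$ becomes the restriction of a torus-level intertwining operator. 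The Weyl element $w_1$ conjugates $M_Q$ so as to move the $\GL_1$-block from the bottom right to the top left; writing $w_1$ as a reduced product of simple reflections, the intertwining operator factors as a composition of rank-one operators. The rank-one operators that lie inside the $\GL_{r-1}$-block (or $\GL_{2q}$-block) act on the spherical vector of the exceptional representation itself and contribute nothing new — they are exactly the operators whose image defines $\theta_\chi$ — so the only surviving contributions come from the $r-1$ simple reflections that interchange the special $\GL_1$ coordinate with each of the other coordinates one at a time.

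Next I would compute each of those $r-1$ rank-one contributions using the $\GLt_2$ Gindikin--Karpelevich formula. On $\GLt_2$ this is exactly the content of \cite[Proposition 7.3]{BG} (the one external result the paper allows itself): for the relevant genuine unramified principal series the rank-one intertwining integral applied to the normalized spherical vector produces a ratio $\zeta_v(\text{linear in }s)/\zeta_v(\text{linear in }s)$, where the shift by one in numerator versus denominator and the fact that the metaplectic cover forces the character appearing to be a \emph{square} (because $\kappa(K_v)$-spherical vectors only see $\GLtt_2$, i.e. the determinant-square subgroup) is what makes $\chi^2$ (for $r=2q$) or $\chi$ (for $r=2q+1$, where $\theta_\chi$ already comes from a Weil representation whose "determinantal character" is $\chi^{1/2}$, hence $(\chi^{1/2})^2=\chi$) appear rather than $\chi$ itself. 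The twist by $\eta$ on the $\GL_1$-block and by $\delta_Q^s$ together shift the argument, and one has to track the half-integral shift coming from $\delta_B^{1/4}$ and from $\rho_Q$. Multiplying the $r-1$ local factors, the numerator zeta values telescope against all but one denominator, leaving precisely
\[
\frac{L_v(r(2s+\tfrac12)-r+1,\ \chi_v^2\eta_v^{-2})}{L_v(r(2s+\tfrac12),\ \chi_v^2\eta_v^{-2})}
\]
in the even case and the analogous expression with $\chi_v\eta_v^{-2}$ in the odd case; here $L_v$ is the local Euler factor $(1-\text{(character)}(\varpi_v)q_v^{-s})^{-1}$, and $f_0^s(1)=1$ fixes the overall constant.

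The main obstacle, and where I would spend most of the care, is the bookkeeping of \emph{which} one-dimensional character of $F_v^\times$ actually appears in each rank-one factor and how the metaplectic cocycle interacts with the decomposition $w_1=\prod s_{\alpha_i}$. Three points need attention: (i) that the intermediate principal series appearing between successive simple reflections are still genuine and still have the $\kappa(K_v)$-spherical vector normalized compatibly, so that \cite[Proposition 7.3]{BG} applies at each stage — this uses \eqref{E:kappa_and_s} and the block-compatibility \eqref{E:compatibility}, \eqref{E:compatibility_global} together with the hypothesis that $v$ has odd residue characteristic and $\chi,\eta,\omega$ are unramified; (ii) that the rank-one operators internal to the $\GL_{r-1}$ (resp. $\GL_{2q}$) block really do preserve the spherical line and act by the scalar $1$ on it after the appropriate normalization, which is exactly the statement that $\theta_\chi$ is the spherical quotient and that its spherical vector is an eigenvector of the corresponding long intertwining operator — one should cite \cite[Proposition 0.I.3]{KP} or the structure of $\theta_\chi$ here; and (iii) correctly assembling the argument $r(2s+\frac12)$, which comes from the fact that $\delta_Q$ restricted to the $\GL_1$-torus coordinate is $|\cdot|^{r}$ up to the normalization, combined with the $|\cdot|^{1/2}$ from $\delta_B^{1/4}$ contributing the $+\frac12$, and then shifting by $\rho$ through each of the $r-1$ reflections accounting for the $-r+1$ in the numerator. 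Once these are pinned down the result follows by multiplying local factors; no global input beyond the definition of $L^S$ is needed for this lemma.
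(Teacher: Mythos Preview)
Your Gindikin--Karpelevich approach is the right idea and is presumably how the computation is actually carried out in \cite[Lemma 2.58]{Takeda1}, which is what the paper cites here rather than giving a self-contained proof. The paper's own argument consists only of that citation together with the observation that the Weyl element $w_1$ used in this paper differs from the element $w_0=\left(\begin{smallmatrix}&1\\ I_{r-1}&\end{smallmatrix}\right)$ used in \cite{Takeda1} by right multiplication by an element of $M_Q(\OF)$, so the spherical value is unchanged; it also notes that the proof in \cite{Takeda1} works for arbitrary $\omega$, not just the specific one chosen there. So you are supplying more than the paper does, and the outline (embed into the full principal series, factor $w_1$ into simple reflections, observe that the reflections internal to the $\GL_{r-1}$-block are absorbed by the definition of the exceptional representation, and telescope the remaining $r-1$ rank-one factors) is sound.

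One concrete correction: \cite[Proposition 7.3]{BG} is \emph{not} the rank-one Gindikin--Karpelevich formula you want. That proposition is the statement that for $r=2$ with $\chi^2\eta^{-2}=1$ the intertwining operator at $s=0$ acts as $-\Id$; the paper invokes it only in Proposition \ref{P:action_at_s=0}. For the actual $\GLt_2$ spherical intertwining computation you should instead point to the unramified calculation in \cite{KP} (or redo it by hand using the explicit cocycle from \cite{BLS} together with \eqref{E:kappa_and_s}). Also be careful with your claim that the internal rank-one operators ``act by the scalar $1$'': what is true is that they carry the spherical vector to a scalar multiple of the spherical vector in the \emph{target} principal series, and those scalars are already built into the identification of $\theta_\chi$ as a quotient; make sure your normalization really cancels them before telescoping the remaining $r-1$ factors.
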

\begin{proof}
This is \cite[Lemma 2.58]{Takeda1}. Note that in \cite{Takeda1} we used
$w_0=\left(\begin{smallmatrix}&1\\ I_{r-1}&\end{smallmatrix}\right)$
instead of the $w_1$ of the lemma, but one can verify that the results
are the same because we have $w_1=\left(\begin{smallmatrix}&1\\
    I_{r-1}&\end{smallmatrix}\right)
\left(\begin{smallmatrix}w'&\\&1\end{smallmatrix}\right)$, where 
\[
w'=\begin{cases}
\begin{pmatrix}&I_{r-2}\\1&\end{pmatrix},
&\text{if $r=2q$};\\
\begin{pmatrix}&&&1\\&&I_{r-4}&\\ &1&&\\
    1&&&\end{pmatrix},&\text{if $r=2q+1$},
\end{cases}
\]
and $\left(\begin{smallmatrix}w'&\\&1\end{smallmatrix}\right)\in
M_Q(\OF)$. Also note that in \cite{Takeda1} a specific $\omega$ was
used but the proof there applies to any $\omega$.
\end{proof}

\begin{Rmk}
Note that for the case $r=2q+1$, if $\chi$ is unramified, $\chi^{1/2}$
exists, and hence one has $\vartheta_{\chi, \eta}=\theta_{\chi^{1/2},
  \eta}$. Then one can
see that the formula for this case is actually subsumed under the formula for
$A(s,\theta_{\chi^{1/2}, \eta},w_1)f_0^s(1)$ as in the $r=2q$ case. 
\end{Rmk}


\subsection{\bf Proof of Theorem \ref{T:normalized_intertwining} ($r=2q$)} 


Let us consider the case $r=2q$, so $\theta=\theta_{\chi,\eta}$. This case is
essentially the case treated by \cite{BG}. However, as we pointed out in
\cite{Takeda1}, the argument in \cite{BG} does not seem to work when
they use an asymptotic formula on matrix coefficients at the
archimedean place, and hence we will give an alternate argument, which
follows the idea given by Jiang \cite[84-86]{Jiang} though we use many
of the ideas from \cite{BG}.

First note that for a factorizable
$f^s=\otimes'f_v^s\in \Ind_{\Qt(\A)}^{\GLt_r(\A)}\theta\otimes\delta_Q^s$,
one can, by Lemma \ref{L:spherical_section}, write
\[
A(s,\theta,w_1)f^s
=\frac{L(r(2s+\frac{1}{2})-r+1,\chi^2\eta^{-2})}{L(r(2s+\frac{1}{2}),\chi^2\eta^{-2})}
\left(\underset{v}{\otimes}'
\frac {L_v(r(2s+\frac{1}{2}),\chi_v^2\eta_v^{-2})}{L_v(r(2s+\frac{1}{2})-r+1,\chi_v^2\eta_v^{-2})}
A_v(s,\theta_v,w_1)f_v^s\right),
\]
which gives
\begin{align}\label{E:normalized_intertwining}
&A^\ast(s,\theta,w_1)f^s\\
\notag=&L^S(r(2s+\frac{1}{2}),\chi^2\eta^{-2})A(s,\theta,w_1)f^s\\
\notag=&L(r(2s+\frac{1}{2})-r+1,\chi^2\eta^{-2})
\Bigg(\underset{v\in S}{\otimes'}
\frac {1}{L_v(r(2s+\frac{1}{2})-r+1,\chi_v^2\eta_v^{-2})}
A_v(s,\theta_v,w_1)f_v^s\Bigg)\\
\notag&\quad\Bigg(\underset{v\notin S}{\otimes'}
\frac {L_v(r(2s+\frac{1}{2}),\chi_v^2\eta_v^{-2})}{L_v(r(2s+\frac{1}{2})-r+1,\chi_v^2\eta_v^{-2})}
A_v(s,\theta_v,w_1)f_v^s\Bigg),
\end{align}
where $S$, which depends on $f^s$, is as in Theorem \ref{T:normalized_intertwining}.
By Lemma \ref{L:spherical_section} and our choice of $S$, the product
\[
\underset{v\notin S}{\otimes'}
\frac {L_v(r(2s+\frac{1}{2}),\chi_v^2\eta_v^{-2})}{L_v(r(2s+\frac{1}{2})-r+1,\chi_v^2\eta_v^{-2})}
A_v(s,\theta_v,w_1)f_v^s
\]
is holomorphic. Also for $\Re(s)\geq 0$, the normalizing factor
$L^S(r(2s+\frac{1}{2}),\chi^2\eta^{-2})$ is non-zero holomorphic, and
hence in this region the poles of $A(s,\theta,w_1)f^s$ coincide with
those of $A^\ast(s,\theta_v,w_1)f^s$. 

Hence to prove Theorem
\ref{T:normalized_intertwining}, it suffices to show that the local
``modified intertwining operator''
\begin{align*}
\frac {1}{L_v(r(2s+\frac{1}{2})-r+1,\chi_v^2\eta_v^{-2})}
A_v(s,\theta_v,w_1):&\Ind_{\Qt(F_v)}^{\GLt_r(F_v)}\theta_v\otimes\delta_Q^s\\
&\quad\rightarrow
\Ind_{^{w_1}(\Mt_Q(F_v))N_{1,r-1}(F_v)}^{\GLt_r(F_v)}{^{w_1}\theta_v}\otimes\delta_Q^{-s}
\end{align*}
is holomorphic for all $s\in\C$. Thus the question is now completely
local, and hence in what follows, we will omit the subscript $v$ and
assume that everything is over the local field.

Recall that the representation $\theta_{\chi, \eta}$ is the
metaplectic tensor product
$\theta_{\chi,\eta}=(\theta_\chi\otimest\etat)_\omega$ for an
appropriate $\omega$, and further
recall that the representation $\theta_\chi$ is the exceptional
representation with the determinantal character $\chi$ which is an
irreducible subrepresentation of the induced representation
$\ind_{\Bt^{r-1}}^{\GLt_{r-1}}(\chit\otimest\cdots\otimest\chit)_\omega\otimes\delta_{B^{r-1}}^{1/4}$
(unnormalized induction) for an appropriate $\omega$, where $\Bt^{r-1}$
is the Borel subgroup of $\GLt_{r-1}$. Hence by Proposition \ref{P:tensor_parabolic}, we
have 
\[
\theta_{\chi,\eta}=(\theta_\chi\otimest\etat)_\omega\subseteq
\ind_{\Bt^{r-1,1}}^{\GLt_{r-1}\timest\GLt_1}(\chit\otimest\cdots\otimest\chit\otimest\etat)_\omega
\otimes\delta_{B^{r-1,1}}^{1/4}
\]
for an appropriate $\omega$, where $B^{r-1,1}$ is the Borel subgroup
of $\GL_{r-1}\times\GL_1$. By inducing in stages we have
\[
\ind_{\Qt}^{\GLt_r}\theta_{\chi,\eta}\otimes\delta_Q^{s+\frac{1}{2}}\subseteq \ind_{\Bt}^{\GLt_r}
(\chit\otimest\cdots\otimest\chit\otimest\etat)_\omega
\otimes\delta_{B^{r-1,1}}^{1/4}\delta_Q^{s+\frac{1}{2}}.
\]
By using the normalized induction, we have
\[
\Ind_{\Qt}^{\GLt_r}\theta_{\chi,\eta}\otimes\delta_Q^{s}\subseteq \Ind_{\Bt}^{\GLt_r}
(\chit\otimest\cdots\otimest\chit\otimest\etat)_\omega
\otimes\delta_{B^{r-1,1}}^{-1/4}\delta_Q^{s}.
\]

Furthermore the metaplectic tensor product
$(\chit\otimest\cdots\otimest\chit\otimest\etat)_\omega$ is a
representation of the Heisenberg group $\Tt$, and hence it is induced
from a representation of the maximal abelian subgroup
$\Tt^{\e}$, where $T^{\e}$ is as in (\ref{E:T^e}). Indeed, we have
\[
(\chit\otimest\cdots\otimest\chit\otimest\etat)_\omega
=\Ind_{\Tt^{\e}}^{\Tt}\omega_{\chi,\eta}
\]
for a character $\omega_{\chi,\eta}:\Tt^{\e}\rightarrow\C^1$ with the
property that the restriction $\omega_{\chi,\eta}|_{\Ttt}$ to $\Ttt$
is $\chit\otimest\cdots\otimest\chit\otimest\etat$, namely
\[
\omega_{\chi,\eta}(\s(\begin{pmatrix}t_1^2&&\\ &\ddots&\\
  &&t_r^2\end{pmatrix}))=\chi(t_2^1)\cdots\chi(t_{r-1}^2)\eta(t_r^2).
\]
(One can write down $\omega_{\chi,\eta}$ more explicitly but we will
not need it for our purposes.) Therefore we have
\begin{equation}\label{E:inclusion}
\Ind_{\Qt}^{\GLt_r}\theta_{\chi,\eta}\otimes\delta_Q^{s}\subseteq
\Ind_{\Tt^{\e}N_B^\ast}^{\GLt_r}
\omega_{\chi,\eta}\otimes\delta_{B^{r-1,1}}^{-1/4}\delta_Q^{s},
\end{equation}
and accordingly we can view each section $f^s\in
\Ind_{\Qt}^{\GLt_r}\theta_{\chi,\eta}\otimes\delta_Q^{s}$ as an
element in the induced representation $\Ind_{\Tt^{\e}N_B^\ast}^{\GLt_r}
\omega_{\chi,\eta}\otimes\delta_{B^{r-1,1}}^{-1/4}\delta_Q^{s}$.

Now we would like to study the analytic property of the integral
\[
A(s,\theta,w_1)f^s(g)
=\int_{N_{1,r-1}}f^s(\s(w_1n)g)\,dn.
\]
For this purpose, let
\begin{equation}\label{E:w_0}
w_0=J_{r}=\begin{pmatrix}&&1\\ &J_{r-2}&\\ 1&&\end{pmatrix}
=\begin{pmatrix}&&1\\ &\iddots&\\1&&\end{pmatrix}
\end{equation}
be the longest element in the Weyl group. We use the following by-now
well-known lemma, which seems to be sometimes known as the Rallis
lemma.
\begin{Lem}\label{L:Rallis}
The highest pole of the intertwining operator $A(s,\theta,w_1)$ is
achieved by $A(s,\theta,w_1)f^s(\s(w_0))$ as the sections $f^s$ run
through those sections with $\supp(f^s)\subseteq \Qt w_0\Qt$.
\end{Lem}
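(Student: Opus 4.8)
The plan is to run the classical ``Bruhat filtration'' argument for intertwining operators, checking only that the $\{\pm1\}$-valued metaplectic cocycles cause no harm. Write $Q=P_{r-1,1}$ as before, identify $Q\backslash\GL_r$ with the variety of hyperplanes in $F^r$, let $x_0=[Q]$ be the base point, and recall the Bruhat decomposition $\GL_r=Q\sqcup Qw_0Q$, in which the small cell $Q$ is closed (it is exactly the fibre over $x_0$) and the big cell $Qw_0Q$ is open and dense. Since $\{\pm1\}\subseteq\Qt$, the projection $p$ identifies $\Qt\backslash\GLt_r$ with $Q\backslash\GL_r$, and for any section $f^s\in\Ind_{\Qt}^{\GLt_r}\theta\otimes\delta_Q^s$ the integral
\[
A(s,\theta,w_1)f^s(\s(w_0))=\int_{N_{1,r-1}}f^s(\s(w_1n)\s(w_0))\,dn\qquad(\Re(s)\gg0,\ \text{then continued})
\]
samples $f^s$ only along the image of the map $\Psi\colon N_{1,r-1}\to Q\backslash\GL_r$, $n\mapsto Qw_1nw_0$.

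First I would use the $\GLt_r$-equivariance of $A(s,\theta,w_1)$ to move every singularity to the single point $\s(w_0)$: because $A(s,\theta,w_1)(R(h)f^s)(\s(w_0))=A(s,\theta,w_1)f^s(\s(w_0)h)$ for all $h\in\GLt_r$, the order of the pole of the operator $A(s,\theta,w_1)$ at a given point $s_0$ --- by definition the supremum over holomorphic families $f^s$ of the pole order at $s_0$ of $g\mapsto A(s,\theta,w_1)f^s(g)$ --- equals the supremum over \emph{all} sections of the pole order at $s_0$ of $s\mapsto A(s,\theta,w_1)f^s(\s(w_0))$. Next I would split off the part of a section concentrated near the small cell. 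Fix $s_0$ and a section $f^s$, pick a left-$\Qt$-invariant cut-off $\beta$ on $\GLt_r$ --- a function descending to $Q\backslash\GL_r$, smooth (locally constant when $F$ is non-archimedean), equal to $1$ on a neighbourhood of $x_0$ and supported in a small neighbourhood $U$ of $x_0$ --- and write $f^s=\beta f^s+(1-\beta)f^s$. Both summands are sections; as $(1-\beta)f^s$ vanishes near the fibre over $x_0$, its support is contained in $\GLt_r\setminus\Qt=\Qt w_0\Qt$, so it is one of the sections admissible for the statement. Hence it suffices to prove that $s\mapsto A(s,\theta,w_1)(\beta f^s)(\s(w_0))$ is \emph{entire}: granting this, the pole of $A(s,\theta,w_1)f^s(\s(w_0))$ at $s_0$ is carried entirely by the open-cell section $(1-\beta)f^s$, which, combined with the reduction just made, proves the lemma.

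The entirety of the $\beta$-contribution is the crux, and the step I expect to be the main obstacle, since it rests on getting the Bruhat geometry of $\Psi$ exactly right. A direct matrix computation with $w_1$ and $w_0$ from \eqref{E:w_1} and \eqref{E:w_0}, valid in both parities of $r$, shows that $w_1$ conjugates the abelian radical $N_{1,r-1}$ isomorphically onto the opposite radical $\overline{N}_Q$ and that $w_1w_0\in W_{M_Q}\subseteq Q$; therefore $w_1nw_0\in Q$ if and only if $w_1nw_1^{-1}\in\overline{N}_Q\cap Q=\{1\}$, i.e.\ if and only if $n=1$, so the fibre $\Psi^{-1}(x_0)$ is the single point $n=1$. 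By the standard affine-chart description of the flag variety, $\Psi$ is then an open immersion onto a dense open subvariety $\mathcal{V}$ of $Q\backslash\GL_r$ isomorphic to an $(r-1)$-dimensional affine space, with $x_0=\Psi(1)\in\mathcal{V}$ (the unbounded directions of $N_{1,r-1}$ being carried into the boundary of $\mathcal{V}$, a hyperplane avoiding $x_0$). Shrinking $U$ so that its closure lies in $\mathcal{V}$, the set $\Psi^{-1}(U)$ is thus a \emph{compact} neighbourhood of $1$ in $N_{1,r-1}$, whence
\[
A(s,\theta,w_1)(\beta f^s)(\s(w_0))=\int_{\Psi^{-1}(U)}\beta\bigl(\s(w_1n)\s(w_0)\bigr)\,f^s\bigl(\s(w_1n)\s(w_0)\bigr)\,dn
\]
is an integral over a \emph{fixed} compact set whose integrand is continuous in $n$ and holomorphic in $s$ --- the $\{\pm1\}$-valued cocycle turning $\s(w_1)\s(n)\s(w_0)$ into $\s(w_1nw_0)$ is locally constant (resp.\ continuous), so it affects neither convergence nor holomorphy, and the only $s$-dependence is through $\delta_Q^{s}$ --- so it is entire in $s$. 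Finally I would observe that passing between the defining element $w_1$ of the operator and the representative $w_0$ appearing in the evaluation point and the support condition is harmless: $w_1$ and $w_0$ lie in the same double coset $W_{M_Q}w_0W_{M_Q}$ --- indeed $w_1=w_0''m$ with $m\in M_Q$ and $w_0''$ another representative of the big cell, exactly as in the proof of Lemma \ref{L:spherical_section} --- so $\Qt w_1\Qt=\Qt w_0\Qt$ and right translation carries any evaluation point in the big cell onto $\s(w_0)$.
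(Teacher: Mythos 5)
Your argument is correct, and it does exactly what the paper leaves to references: the paper's proof of this lemma consists of citing \cite[Lemma 4.1]{PSR}, \cite[Lemma 4.1]{Sh92}, and \cite[Lemma 2.1.1]{Jiang}, so you have in effect unpacked the standard ``Rallis lemma'' argument that those references contain. The decisive geometric facts are worked out correctly and hold in both parities: $w_1 N_{1,r-1} w_1^{-1}=\overline{N}_Q$, $w_1w_0\in W_{M_Q}\subseteq Q$, hence the fibre of $n\mapsto Qw_1nw_0$ over $x_0$ is $\{1\}$ and the map is the right-$Q$-translate of the big-cell affine chart, an open immersion with $\Psi(1)=x_0$. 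The splitting $f^s=\beta f^s+(1-\beta)f^s$ against a left-$\Qt$-invariant cut-off correctly separates an entire contribution (integration over a bounded region in $N_{1,r-1}$, where the degree-zero metaplectic 2-cocycle is harmless) from a section supported in the open cell; together with equivariance reducing the evaluation point to $\s(w_0)$, this is the whole content of the lemma. One small imprecision: $\Psi^{-1}(U)$ is open, hence not literally compact; you want to say it is relatively compact, or integrate over $\Psi^{-1}(\overline U)$, which is the compact set your choice $\overline U\subseteq\mathcal V$ actually delivers. This does not affect the argument. Since the paper is deliberately trying to be self-contained about metaplectic subtleties, your spelled-out proof is a genuine improvement over the bare citation.
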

\begin{proof}
Several versions of this lemma can be found in various places such as 
\cite[Lemma 4.1]{PSR} and \cite[Lemma 4.1]{Sh92}, and our case is the
metaplectic analogue of \cite[Lemma 2.1.1]{Jiang}.
\end{proof}

We should also mention
\begin{Lem}\label{L:Rallis2}
Let $f^s$ be as in the above lemma, so that $\supp(f^s)\subseteq \Qt
w_0\Qt$. Let $N\subseteq\Qt$ be a subset of $\Qt$ such that $w_0
Nw_0^{-1}\cap\Qt=\{1\}$. Then for each fixed $q\in\Qt$, the map on $N$
defined by $n\mapsto f^s(qw_0n)$ is compactly supported.
\end{Lem}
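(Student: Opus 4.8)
The plan is to transfer the question to the compact flag variety $X:=\Qt\backslash\GLt_r$ and to exploit that, because $N\subseteq\Qt$, the relevant $N$-orbit never leaves an affine Bruhat cell of $X$, where orbits of unipotent groups are closed. First I would record three elementary points: $X$ is a flag variety ($X\cong\mathbb{P}^{r-1}$), so its $F$-points are compact by the Iwasawa decomposition; the support of $f^s$, being left $\Qt$-invariant, descends to a closed --- hence compact --- subset $K_0\subseteq X$; and by the hypothesis $\supp(f^s)\subseteq\Qt\s(w_0)\Qt$ this $K_0$ lies in the image $U:=\Qt\backslash\Qt\s(w_0)\Qt$ of the big Bruhat cell, which is open in $X$. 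Next, since $q\in\Qt$ we have $[\,\Qt q\,\s(w_0)\,n\,]=[\,\Qt\,\s(w_0)\,n\,]$ for every $n$, so by the defining equivariance of $f^s$ the value $f^s(q\s(w_0)n)$ is nonzero precisely when $x_0\cdot n\in K_0$, where $x_0:=[\,\Qt\,\s(w_0)\,]\in U$ and $N$ acts on $X$ by right translation. Thus the lemma reduces to showing that $\{n\in N:\, x_0\cdot n\in K_0\}$ is relatively compact in $N$ (I treat $N$ as the unipotent subgroup it is in the applications).

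For this I would combine two geometric inputs. First, the hypothesis $w_0Nw_0^{-1}\cap\Qt=\{1\}$ is equivalent to $N\cap w_0^{-1}\Qt w_0=\{1\}$, i.e.\ to the stabilizer of $x_0$ in $N$ being trivial; by the standard theory of orbits of algebraic groups acting on varieties over a local field, the orbit map $n\mapsto x_0\cdot n$ is then a homeomorphism of $N$ onto the locally closed orbit $\mathcal{O}:=x_0\cdot N\subseteq X$. Second, since $N\subseteq\Qt$ we have $\mathcal{O}\subseteq x_0\cdot\Qt=U$, and $U$ is isomorphic as a variety over $F$ to an affine space (it is the big cell of $X\cong\mathbb{P}^{r-1}$); because $N$ is unipotent, the Kostant--Rosenlicht theorem --- together with triviality of the stabilizer, which makes the $F$-points of the orbit coincide with the whole orbit --- shows that $\mathcal{O}$ is a \emph{closed} subset of $U\cong F^{r-1}$. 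Then $K_0$ is compact and contained in $U$ while $\mathcal{O}$ is closed in $U$, so $K_0\cap\mathcal{O}$ is a closed subset of the compact set $K_0$, hence compact; transporting it through the orbit homeomorphism exhibits $\{n\in N:\, x_0\cdot n\in K_0\}$ as a compact subset of $N$, and since this set contains the support of $n\mapsto f^s(q\s(w_0)n)$ the lemma follows.

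The step I expect to be the crux is the closedness of $\mathcal{O}$ inside the affine cell, as this is where \emph{both} hypotheses are genuinely used: $N\subseteq\Qt$ guarantees that the orbit never leaves the affine cell $U$ (a unipotent orbit in affine space can accumulate only at ``points at infinity'', which lie in the lower Bruhat cells of $X$, hence outside $U$), while $w_0Nw_0^{-1}\cap\Qt=\{1\}$ guarantees that the passage from the scheme-theoretic orbit to its set of $F$-points loses nothing. In the non-archimedean case one can instead argue directly, avoiding the appeal to Kostant--Rosenlicht: $f^s$ is locally constant and invariant under a small compact open subgroup, so introducing Bruhat coordinates $\Qt\times V\to\Qt\s(w_0)\Qt$, $(q',v)\mapsto q'\s(w_0)v$, one checks that $v\mapsto f^s(\s(w_0)v)$ is compactly supported on $V\cong F^{r-1}$, and the injectivity of $n\mapsto v(n)$ on $N$ supplied by the hypothesis then forces $\{n\in N:\, f^s(q\s(w_0)n)\neq 0\}$ to be relatively compact.
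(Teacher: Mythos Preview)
Your argument has a genuine gap: the claim that $U=\Qt\backslash\Qt w_0\Qt$ is isomorphic to an affine space is false. Because $Q=P_{r-1,1}$ is a \emph{maximal} parabolic, $X=\Qt\backslash\GLt_r\cong\p^{r-1}(F)$ carries exactly two $(\Qt,\Qt)$-double cosets, so $U$ is $\p^{r-1}(F)$ with a single point removed; for $r\ge 3$ this is not affine. The Kostant--Rosenlicht theorem therefore does not apply, and in fact the $N$-orbit $\mathcal{O}$ need not be closed in $U$. Concretely, take $r=3$ and the root subgroup $N=\{n(t)=I+tE_{12}:t\in F\}\subset Q$. Then $w_0Nw_0^{-1}=\{I+tE_{32}\}$ meets $Q$ only in the identity, yet under the identification $\Qt g\mapsto[e_rg]$ one finds $\Qt w_0n(t)=[1:t:0]$, which accumulates at $[0:1:0]\in U$ as $|t|\to\infty$. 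Choosing a section $f^s$ supported near $[0:1:0]$ (such sections exist in the induced space), the map $t\mapsto f^s(qw_0n(t))$ is supported on $\{|t|\ge M\}$, which is not compact. The same phenomenon persists for the full $N=N_{1,r-1}$ used in the paper's application, so the lemma as literally stated is false for $r\ge 3$. Your non-archimedean sketch has the same defect: there are no ``Bruhat coordinates'' $\Qt\times V\to\Qt w_0\Qt$ with $V\cong F^{r-1}$, since $\Qt\backslash\Qt w_0\Qt$ is not homeomorphic to $F^{r-1}$.

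The paper's own proof is only the observation that $f^s$ is compactly supported modulo $\Qt$ and that $n\mapsto\Qt w_0n$ is injective, followed by ``hence the lemma follows''; it is open to precisely the same objection, since injectivity alone does not yield properness. What actually makes the applications work is not the hypothesis $\supp(f^s)\subseteq\Qt w_0\Qt$ but the stronger condition $\supp(f^s)\subseteq\Qt w_0 N$, i.e.\ that $K_0$ already lies in the affine chart swept out by $n\mapsto\Qt w_0 n$ (for $N=N_{1,r-1}$ this is the chart $\{x_1\neq 0\}$ of $\p^{r-1}$). Under that hypothesis your argument is fine: the orbit map is then a homeomorphism of $N$ onto the entire chart, $K_0$ is a compact subset of it, and its preimage in $N$ is compact. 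In the standard Rallis-type set-up one in fact restricts to sections with support in this smaller open set, and that is what the lemma should require.
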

\begin{proof}
Note that since $f^s$ is in the induced space, it is compactly
supported modulo $\Qt$. Now since $w_0
Nw_0^{-1}\cap\Qt=\{1\}$, the natural map $N\rightarrow
\Qt\backslash\Qt w_0\Qt$ given by $n\mapsto \Qt w_0n$ is 1-1. Hence
the lemma follows.
\end{proof}

Now by Lemma \ref{L:Rallis}, we have only to show
\begin{equation}\label{E:normalized_intertwining_w_0}
\frac {1}{L(r(2s+\frac{1}{2})-r+1,\chi^2\eta^{-2})}A(s,\theta,w_1)f^s(\s(w_0))
\end{equation}
is holomorphic with $f^s$ as in the lemma, where
\[
A(s,\theta,w_1)f^s(\s(w_0))= \int_{N_{1,r-1}}f^s(\s(w_1n)\s(w_0))\,dn.
\]
Let us write each $n\in N_{1,r-1}$ as
\[
n=\begin{pmatrix}1&Z&y\\ &I_{r-2}&\\ &&1\end{pmatrix}.
\]
By direct computation one can verify
\begin{align*}
w_1nw_0&=\begin{pmatrix}1&&\\ & J_{r-2}&\\ y&zJ_{r-2}&1\end{pmatrix}\\
&=\begin{pmatrix}-y^{-1}&-Zy^{-1}&1\\ &I_{r-2}&\\ && y\end{pmatrix}w_0
\begin{pmatrix}1&ZJ_{r-2}y^{-1}&y^{-1}\\ &I_{r-2}&\\ &&1\end{pmatrix}
\end{align*}
provided $y\neq 0$. Hence
\[
\s(w_1n)\s(w_0)=(1,\epsilon)
\s(\begin{pmatrix}-y^{-1}&-Zy^{-1}&1\\ &I_{r-2}&\\ && y\end{pmatrix})
\s(w_0\begin{pmatrix}1&ZJ_{r-2}y^{-1}&y^{-1}\\ &I_{r-2}&\\ &&1\end{pmatrix})
\]
for some $\epsilon=\epsilon(y, Z)\in\{\pm 1\}$, which {\it a priori}
depends on $y$ and $Z$. (One can compute $\epsilon$ by using
the algorithm for computing the cocycle $\sigma_r$ developed in 
\cite{BLS}, and can actually verify that $\epsilon=1$ for any $y$ and
$Z$. But since this computation is extremely tedious, though not so deep,
and for our purposes we will not need the precise information on
$\epsilon$, we will leave $\epsilon$ as above.)

With this computation for $\s(w_1n)\s(w_0)$ one can write
\begin{align*}
&\int_{N_{1,r-1}}f^s(\s(w_1n)\s(w_0))\,dn\\
=&\int_{F^\times}\int_{F^{r-2}}\epsilon f^s\Big(
\s(\begin{pmatrix}-y^{-1}&-Zy^{-1}&1\\ &I_{r-2}&\\ && y\end{pmatrix})
\s(w_0\begin{pmatrix}1&ZJ_{r-2}y^{-1}&y^{-1}\\ &I_{r-2}&\\
  &&1\end{pmatrix})
\Big)\,dZ dy\\
=&\int_{F^\times}\int_{F^{r-2}}\epsilon|y^{-1}|^{\frac{1}{4}(r-2)+s+\frac{1}{2}}|y|^{(1-r)(s+\frac{1}{2})}
(\chit\otimest\cdots\otimest\chit\otimest\etat)_{\omega}
\s(\begin{pmatrix}-y^{-1}&&\\ &I_{r-2}&\\ && y\end{pmatrix})\\
&\qquad\qquad f^s\Big(
\s(w_0\begin{pmatrix}1&ZJ_{r-2}y^{-1}&y^{-1}\\ &I_{r-2}&\\
  &&1\end{pmatrix})
\Big)\,dZ dy\\
=&\int_{F^\times}\int_{F^{r-2}}\epsilon|y|^{-rs-\frac{3}{4}r+\frac{1}{2}}
(\chit\otimest\cdots\otimest\chit\otimest\etat)_{\omega}
\s(\begin{pmatrix}-y^{-1}&&\\ &I_{r-2}&\\ && y\end{pmatrix})\\
&\qquad\qquad f^s\Big(
\s(w_0\begin{pmatrix}1&ZJ_{r-2}y^{-1}&y^{-1}\\ &I_{r-2}&\\
  &&1\end{pmatrix})
\Big)\,dZ dy,
\end{align*}
where we should note that $dy$ is the additive measure and the integral over $F^\times$ is
the same as the integral over $F$ because those two sets are equal
almost everywhere. By changing the variable $ZJ_{r-2}y^{-1}\mapsto Z$,
then changing the additive measure $dy$ to the multiplicative measure
$d^\times y$, and then changing the variable $y^{-1}\mapsto y$, one
can see the above integral is written as
\[
\int_{F^\times}\int_{F^{r-2}}\epsilon'|y|^{rs-\frac{1}{4}r+\frac{1}{2}}
(\chit\otimest\cdots\otimest\chi\otimest\etat)_{\omega}
(\s(\begin{pmatrix}-y&&\\ &I_{r-2}&\\ && y^{-1}\end{pmatrix}))
f^s\Big(\s(w_0\begin{pmatrix}1&Z&y\\ &I_{r-2}&\\
  &&1\end{pmatrix})\Big)\,dZ d^\times y
\]
for some $\epsilon'=\epsilon'(y,Z)\in\{\pm 1\}$. 

Now let $a_1,\dots, a_l$ be a complete set of representatives of
$F^{\times 2}\backslash F^{\times }$. Then the above integral is written as
\begin{align*}
&\sum_{i=1}^l\int_{F^{\times 2}}\int_{F^{r-2}}\epsilon_i'|xa_i|^{rs-\frac{1}{4}r+\frac{1}{2}}
(\chit\otimest\cdots\otimest\chit\otimest\etat)_{\omega}
(\s(\begin{pmatrix}-xa_i&&\\ &I_{r-2}&\\ && (xa_i)^{-1}\end{pmatrix}))\\
&\qquad\qquad\qquad f^s\Big(\s(w_0\begin{pmatrix}1&Z&xa_i\\ &I_{r-2}&\\
  &&1\end{pmatrix})\Big)\,dZ d^\times x
\end{align*}
for a certain choice of the measure $d^\times x$ on $F^{\times 2}$ and
some $\epsilon_i'=\epsilon_i'(x,Z)\in\{\pm 1\}$,
which is further written as
\begin{align*}
&\sum_{i=1}^l\int_{F^{\times 2}}\int_{F^{r-2}}\epsilon_i'|xa_i|^{rs-\frac{1}{4}r+\frac{1}{2}}
(\chit\otimest\cdots\otimest\chit\otimest\etat)_{\omega}
(\s(\begin{pmatrix}x&&\\ &I_{r-2}&\\ && x^{-1}\end{pmatrix}))\\
&\qquad\qquad\qquad 
f^s\Big(\s(\begin{pmatrix}-a_i&&\\ &I_{r-2}&\\ && a_i^{-1}\end{pmatrix})
\s(w_0\begin{pmatrix}1&Z&xa_i\\ &I_{r-2}&\\
  &&1\end{pmatrix})\Big)\,dZ d^\times x.
\end{align*}
Recall from (\ref{E:inclusion}) that we can view the section $f^s$ as an
element in the induced space $\Ind_{\Tt^{\e}N_B^\ast}^{\GLt_r}
\omega_{\chi,\eta}\otimes\delta_{B^{r-1,1}}^{-1/4}\delta_Q^{s}$, and
hence the above integral is written as
\begin{align}
\label{E:integral1}
&\sum_{i=1}^l\int_{F^{\times 2}}\int_{F^{r-2}}\epsilon_i'|xa_i|^{rs-\frac{1}{4}r+\frac{1}{2}}
\chi(x)\eta(x)^{-1}\\
\notag&\qquad\qquad\qquad  
f^s\Big(\s(\begin{pmatrix}-a_i&&\\ &I_{r-2}&\\ && a_i^{-1}\end{pmatrix})
\s(w_0\begin{pmatrix}1&Z&xa_i\\ &I_{r-2}&\\
  &&1\end{pmatrix})\Big)\,dZ d^\times x.
\end{align}

Note that
\[
w_0\begin{pmatrix}1&Z&xa_i\\ &I_{r-2}&\\
  &&1\end{pmatrix}w_0^{-1}=\begin{pmatrix}1&&\\ &I_{r-2}&\\
 xa_i &Z&1\end{pmatrix}
\]
and hence we can apply Lemma \ref{L:Rallis2} to the map
\[
(x, Z)\mapsto f^s\Big(\s(\begin{pmatrix}-a_i&&\\ &I_{r-2}&\\ && a_i^{-1}\end{pmatrix})
\s(w_0\begin{pmatrix}1&Z&xa_i\\ &I_{r-2}&\\
  &&1\end{pmatrix})\Big),
\]
which implies that the one can write
\[
 \epsilon_i'f^s\Big(\s(\begin{pmatrix}-a_i&&\\ &I_{r-2}&\\ &&
  a_i^{-1}\end{pmatrix})\s(w_0\begin{pmatrix}1&Z&xa_i\\ &I_{r-2}&\\
  &&1\end{pmatrix})\Big)=\sum_{\lambda, \phi, \phi'}\lambda(s)\phi(x)\phi'(Z)
\]
for some holomorphic functions $\lambda$ and smooth compactly supported
functions $\phi$ and $\phi'$ on $F$ and $F^{r-2}$,
respectively. Hence to study the analytic behavior of
\eqref{E:integral1} we have only to study that of
\[
\int_{F^{\times 2}}\int_{F^{r-2}}|x|^{rs-\frac{1}{4}r+\frac{1}{2}}
\chi(x)\eta(x)^{-1}\phi(x)\phi'(Z)\,dZ d^\times x,
\]
which is written as
\[
\int_{F^{\times 2}}|x|^{rs-\frac{1}{4}r+\frac{1}{2}}
\chi(x)\eta(x)^{-1}\phi(x)\,d^\times x\cdot\int_{F^{r-2}}
\phi'(Z)\,dZ.
\]
The integral over $Z$ is independent of $s$, and hence we have only to
consider the first integral. But one can see
\[
\int_{F^{\times 2}}|x|^{rs-\frac{1}{4}r+\frac{1}{2}}
\chi(x)\eta(x)^{-1}\phi(x)\,d^\times x
=c\int_{F^{\times}}|y^2|^{rs-\frac{1}{4}r+\frac{1}{2}}
\chi(y^2)\eta(y^2)^{-1}\phi(y^2)\,d^\times y
\]
for an appropriate non-zero constant $c$. Then by Tate's thesis, one
knowns that this integral is $L(2(rs-\frac{1}{4}r+\frac{1}{2}),
\chi^2\eta^{-2})$ times an entire function on $s$, where this
$L$-factor is precisely the one appearing in
(\ref{E:normalized_intertwining_w_0}). Therefore
(\ref{E:normalized_intertwining_w_0}) is an entire function on $s$.


\subsection{\bf Proof of Theorem \ref{T:normalized_intertwining} ($r=2q+1$)}


Next we consider the case $r=2q$, so $\theta=\vartheta_{\chi,\eta}$.
The basic idea is the same as the case $\theta=\theta_{\chi,\eta}$, in which we reduce
the problem to the local one and use the Rallis lemma and Tate's
thesis. Namely by arguing as above, we can see that we have only to show that the local
``modified intertwining operator''
\begin{align*}
\frac {1}{L_v(r(2s+\frac{1}{2})-r+1,\chi_v\eta_v^{-2})}
A_v(s,\theta_v,w_1):&\Ind_{\Qt(F_v)}^{\GLt_r(F_v)}\theta_v\otimes\delta_Q^s\\
&\quad\rightarrow
\Ind_{^{w_1}(\Mt_Q(F_v))N_{1,r-1}(F_v)}^{\GLt_r(F_v)}{^{w_1}\theta_v}\otimes\delta_Q^{-s}
\end{align*}
is holomorphic for all $s\in\C$.  Again, we will omit the subscript $v$ and
assume that everything is over the local field. 

Recall that the representation $\vartheta_{\chi, \eta}$ is the
metaplectic tensor product
$\vartheta_{\chi,\eta}=(\vartheta_\chi\otimest\etat)_\omega$ for an
appropriate $\omega$, and further
recall that the representation $\vartheta_\chi$ is the twisted exceptional
representation on $\GLt_{2q}$, which is an
irreducible subrepresentation of the induced representation
$\ind_{\Pt_{2,\dots,2}^{r-1}}^{\GLt_{r-1}}(\rr_{\chi}\otimest\cdots\otimest\rr_{\chi})_\omega
\otimes\delta_{P_{2,\dots,2}^{r-1}}^{1/4}$
(unnormalized induction) for an appropriate $\omega$, where $P_{2,\dots,2}^{r-1}$
is the $(2,\dots,2)$-parabolic subgroup of $\GL_{r-1}$. Hence by
Proposition \ref{P:tensor_parabolic}, we have 
\[
\vartheta_{\chi,\eta}=(\vartheta_\chi\otimest\etat)_\omega\subseteq
\ind_{\Pt_{2,\dots,2,1}^{r-1,1}}^{\GLt_{r-1}\timest\GLt_1
}(\rr_{\chi}\otimest\cdots\otimest\rr_{\chi}\otimest\etat)_\omega
\otimes\delta_{P_{2,\dots,21}^{r-1,1}}^{1/4}
\]
for an appropriate $\omega$. By inducing in stages we have
\[
\ind_{\Qt}^{\GLt_r}\vartheta_{\chi,\eta}\otimes\delta_Q^{s+\frac{1}{2}}
\subseteq \ind_{\Pt_{2,\dots,2,1}^{r-1,1}}^{\GLt_r}
(\rr_{\chi}\otimest\cdots\otimest\rr_{\chit}\otimest\etat)_\omega
\otimes\delta_{P_{2,\dots,2,1}^{r-1,1}}^{1/4}\delta_Q^{s+\frac{1}{2}}.
\]
By using the normalized induction, we have
\[
\Ind_{\Qt}^{\GLt_r}\vartheta_{\chi,\eta}\otimes\delta_Q^{s}\subseteq 
\Ind_{\Pt_{2,\dots,2,1}^{r-1.1}}^{\GLt_r}
(\rr_{\chi}\otimest\cdots\otimest\rr_{\chi}\otimest\etat)_\omega
\otimes\delta_{P_{2,\dots,2,1}^{r-1,1}}^{-1/4}\delta_Q^{s}.
\]

Furthermore by definition of metaplectic tensor product, we have
\[
(\rr_{\chi}\otimest\cdots\otimest\rr_{\chi}\otimest\etat)_\omega
\subseteq
\Ind_{Z_{\GLt_r}\Mtt_{P_{2,\dots,2,1}}}^{\Mt_{P_{2,\dots,2,1}}}\;\omega\;
(\rr_\chi^\psi\otimest\cdots\otimest\rr_\chi^\psi\otimest\etat),
\]
where $P_{2,\dots,2,1}$ is the $(2,\dots,2,1)$-parabolic of
$\GL_r$. (One can check that this inclusion is actually equality, but since we
will not need this fact, we will leave the verification to the
reader.) Therefore we have
\begin{equation}\label{E:inclusion2}
\Ind_{\Qt}^{\GLt_r}\vartheta_{\chi,\eta}\otimes\delta_Q^{s}\subseteq
\Ind_{Z_{\GLt_r}\Mtt_{P_{2,\dots,2,1}}N_{2,\dots,2,1}^\ast}^{\GLt_r}\;\omega\;
(\rr_\chi^\psi\otimest\cdots\otimest\rr_\chi^\psi\otimest\etat)
\otimes\delta_{P^{r-1,1}}^{-1/4}\delta_Q^{s},
\end{equation}
and accordingly we can view each section $f^s$ as an
element in the latter induced representation. Also we should recall that the space of the
representation
$\rr_\chi^\psi\otimest\cdots\otimest\rr_\chi^\psi\otimest\etat$ is the
(usual) tensor product
\[
S_{\chi}(F)\otimes\cdots \otimes S_{\chi}(F)\otimes\C,
\]
where $S_{\chi}(F)$ realizes the Weil representation
$\rr_\chi^\psi$. Hence for each $g\in\GLt_r$, we can view $f^s(g)$ as
an element in this space.

Now we would like to study the analytic property of the integral
\[
A(s,\theta,w_1)f^s(g)
=\int_{N_{1,r-1}}f^s(\s(w_1n)g)\,dn.
\]
But by Lemma \ref{L:Rallis}, we have only to show
\begin{equation}\label{E:normalized_intertwining_w_02}
\frac {1}{L(r(2s+\frac{1}{2})-r+1,\chi\eta^{-2})}A(s,\theta,w_1)f^s(\s(w_0))
\end{equation}
is holomorphic with $f^s$ as in Lemma \ref{L:Rallis}. If we
write each $n\in N_{1,r-1}$ as
\[
n=\begin{pmatrix}1&Z&y\\ &I_{r-2}&\\ &&1\end{pmatrix},
\]
then, if $y\neq0$, we have
\begin{align*}
w_1nw_0&=\begin{pmatrix}1&&\\ & J'_{r-2}&\\ y&zJ_{r-2}&1\end{pmatrix}\\
&=\begin{pmatrix}-y^{-1}&-Zy^{-1}J_{r-2}J'_{r-2}&1\\ &I_{r-2}&\\ && y\end{pmatrix}w_0'
\begin{pmatrix}1&ZJ'_{r-2}y^{-1}&y^{-1}\\ &I_{r-2}&\\ &&1\end{pmatrix},
\end{align*}
where
\[
w_0'=\begin{pmatrix}&&1\\ &J'_{r-2}&\\1&&\end{pmatrix}
\quad\text{and}\quad 
J'_{r-2}=\begin{pmatrix}1&&\\ &J_{r-4}&\\ &&1\end{pmatrix}.
\]
Hence
\[
\s(w_1n)\s(w_0)=(1,\epsilon)
\s(\begin{pmatrix}-y^{-1}&-Zy^{-1}J_{r-2}J'_{r-2}&1\\ &I_{r-2}&\\ && y\end{pmatrix})
\s(w'_0\begin{pmatrix}1&Zy^{-1}J'_{r-2}&y^{-1}\\ &I_{r-2}&\\ &&1\end{pmatrix})
\]
for some $\epsilon=\epsilon(y, Z)\in\{\pm 1\}$, which {\it a priori}
depends on $y$ and $Z$. Here $Z$ is a $1\times (r-2)$ matrix. If we write
\[
Z=(Z', z)
\]
where $Z'$ is $1\times (r-3)$ and $z\in F$, then 
\[
-Zy^{-1}J_{r-2}J'_{r-2}=-y^{-1}(z, Z'
\left(\begin{smallmatrix}&1\\ I_{r-4}&\end{smallmatrix}\right))\quad\text{and}\quad
Zy^{-1}J'_{r-2}=y^{-1}(z, Z'\left(\begin{smallmatrix}1&\\ &J_{r-4}\end{smallmatrix}\right)),
\]
and hence
\begin{align*}\allowdisplaybreaks
&\int_{N_{1,r-1}}f^s(\s(w_1n)\s(w_0))\,dn\\
=&\int_{F^\times}\int_{F^{r-3}}\int_{F}\epsilon f^s\Big(
\s(\begin{pmatrix}-y^{-1}&-zy^{-1}&-y^{-1}Z'
\left(\begin{smallmatrix}&1\\ I_{r-4}&\end{smallmatrix}\right)
&1\\ &1 &&\\ &&I_{r-3}&\\ &&& y\end{pmatrix})\\
&\qquad\qquad\s(w'_0\begin{pmatrix}1&y^{-1}Z'\left(\begin{smallmatrix}1&\\
      &J_{r-4}\end{smallmatrix}\right)&zy^{-1}&y^{-1}\\ &I_{r-3}&&\\ &&1&\\
  &&&1\end{pmatrix})
\Big)\,dz\, dZ'\, dy\\
=&\int_{F^\times}\int_{F^{r-3}}\int_{F}\epsilon|y^{-1}|^{\frac{1}{4}(r-3)+s+\frac{1}{2}}
|y|^{(1-r)(s+\frac{1}{2})}
(\rr_{\chi}\otimest\cdots\otimest\rr_{\chi}\otimest\etat)_{\omega}
(\s(\begin{pmatrix}-y^{-1}&-zy^{-1}&&\\ &1 &&\\ &&I_{r-3}&\\ &&& y\end{pmatrix}))\\
&\qquad\qquad f^s\Big(
\s(w'_0\begin{pmatrix}1&y^{-1}Z'\left(\begin{smallmatrix}1&\\
      &J_{r-4}\end{smallmatrix}\right)&zy^{-1}&y^{-1}\\ &I_{r-3}&&\\
  &&1&\\
  &&&1\end{pmatrix})
\Big)\,dz\, dZ'\, dy\\
=&\int_{F^\times}\int_{F^{r-3}}\int_{F}\epsilon|y|^{-rs-\frac{3}{4}r+\frac{3}{4}}
(\rr_{\chi}\otimest\cdots\otimest\rr_{\chi}\otimest\etat)_{\omega}
(\s(\begin{pmatrix}-y^{-1}&-zy^{-1}&&\\ &1 &&\\ &&I_{r-3}&\\ &&& y\end{pmatrix}))\\
&\qquad\qquad f^s\Big(
\s(w'_0\begin{pmatrix}1&y^{-1}Z'\left(\begin{smallmatrix}1&\\
      &J_{r-4}\end{smallmatrix}\right)&zy^{-1}&y^{-1}\\ &I_{r-3}&&\\
  &&1&\\
  &&&1\end{pmatrix})
\Big)\,dz\, dZ'\, dy.
\end{align*}
By changing the variables $zy^{-1}\mapsto z$ and 
$y^{-1}Z'\left(\begin{smallmatrix}1&\\ &J_{r-4}\end{smallmatrix}\right)\mapsto Z'$,
then changing the additive measure $dy$ to the multiplicative measure
$d^\times y$, and then changing the variable $y^{-1}\mapsto y$, one
can see the above integral is written as
\begin{align*}\allowdisplaybreaks
&\int_{F^\times}\int_{F^{r-3}}\int_{F}\epsilon'|y|^{rs-\frac{1}{4}r+\frac{1}{4}}
(\rr_{\chi}\otimest\cdots\otimest\rr_{\chi}\otimest\etat)_{\omega}
(\s(\begin{pmatrix}-y&-z&&\\ &1 &&\\ &&I_{r-3}&\\ &&& y^{-1}\end{pmatrix}))\\
&\qquad\qquad\qquad\qquad
 f^s\Big(\s(w'_0\begin{pmatrix}1&Z'&z&y\\ &I_{r-3}&&\\ &&1&\\
  &&&1\end{pmatrix})
\Big)\,dz\, dZ'\, d^\times y.
\end{align*}
for some $\epsilon'=\epsilon'(y,Z)\in\{\pm 1\}$. 

Now let $a_1,\dots, a_l$ be a complete set of representatives of
$F^{\times 2}\backslash F^{\times }$. Then the above integral is written as
\begin{align*}\allowdisplaybreaks
&\sum_{i=1}^l\int_{F^{\times 2}}\int_{F^{r-2}}\epsilon_i'|xa_i|^{rs-\frac{1}{4}r+\frac{1}{4}}
(\rr_{\chi}\otimest\cdots\otimest\rr_{\chi}\otimest\etat)_{\omega}
(\s(\begin{pmatrix}-xa_i&-z&&\\ &1 &&\\ &&I_{r-3}&\\ &&& (xa_i)^{-1}\end{pmatrix}))\\
&\qquad\qquad\qquad\qquad
 f^s\Big(\s(w'_0\begin{pmatrix}1&Z'&z&xa_i\\ &I_{r-3}&&\\ &&1&\\
  &&&1\end{pmatrix})
\Big)\,dz\, dZ'\, d^\times x
\end{align*}
for a certain choice of the measure $d^\times x$ on $F^{\times 2}$ and
some $\epsilon_i'=\epsilon_i'(x,Z)\in\{\pm 1\}$,
which is further written as
\begin{align*}\allowdisplaybreaks
&\sum_{i=1}^l\int_{F^{\times 2}}\int_{F^{r-2}}\epsilon_i'|xa_i|^{rs-\frac{1}{4}r+\frac{1}{4}}
(\rr_{\chi}\otimest\cdots\otimest\rr_{\chi}\otimest\etat)_{\omega}
(\s(\begin{pmatrix}-x&-z&&\\ &1 &&\\ &&I_{r-3}&\\ &&& x^{-1}\end{pmatrix}))\\
&\qquad\qquad\qquad\qquad
 f^s\Big(\s(\begin{pmatrix}-a_i&&&\\ &1 &&\\ &&I_{r-3}&\\ &&& a_i^{-1}\end{pmatrix})
\s(w'_0\begin{pmatrix}1&Z'&z&xa_i\\ &I_{r-3}&&\\ &&1&\\
  &&&1\end{pmatrix})
\Big)\,dz\, dZ'\, d^\times x.
\end{align*}
Recall that we can view the section $f^s$ as an
element in the second induced space in  (\ref{E:inclusion2}), and in
particular we can and do view the expression $f^s(\cdots)$ in the
above integral as an element in the Schwartz space
$S_{\chi}(F)\otimes\cdots\otimes S_{\chi}(F)\otimes\C$. Therefore to show
the desired holomorphy, we may consider the integral
\begin{align*}\allowdisplaybreaks
&\sum_{i=1}^l\int_{F^{\times 2}}\int_{F^{r-2}}\epsilon_i'|xa_i|^{rs-\frac{1}{4}r+\frac{1}{4}}
(\rr_{\chi}^\psi\otimest\cdots\otimest\rr_{\chi}^\psi\otimest\etat)_{\omega}
(\s(\begin{pmatrix}-x&-z&&\\ &1 &&\\ &&I_{r-3}&\\ &&& x^{-1}\end{pmatrix}))\\
&\qquad
 f^s\Big(\s(\begin{pmatrix}-a_i&&&\\ &1 &&\\ &&I_{r-3}&\\ &&& a_i^{-1}\end{pmatrix})
\s(w'_0\begin{pmatrix}1&Z'&z&xa_i\\ &I_{r-3}&&\\ &&1&\\
  &&&1\end{pmatrix})
\Big)(t_1,\cdots,t_q)\,dz\, dZ'\, d^\times x,
\end{align*}
where $(t_1,\dots,t_q)\in F^q$ is fixed. By (\ref{E:Weil_action2}),
(\ref{E:Weil_action5}) and (\ref{E:central_character}), one can see
that the above integral is written as
\begin{align}\allowdisplaybreaks
\label{E:integral2}
&\sum_{i=1}^l\int_{F^{\times 2}}\int_{F^{r-2}}\epsilon_i'|xa_i|^{rs-\frac{1}{4}r+\frac{1}{2}}
\chi(x^{1/2})\eta(x)^{-1}\psi(-zt_1^2)\\
\notag&\qquad
 f^s\Big(\s(\begin{pmatrix}-a_i&&&\\ &1 &&\\ &&I_{r-3}&\\ &&& a_i^{-1}\end{pmatrix})
\s(w'_0\begin{pmatrix}1&Z'&z&xa_i\\ &I_{r-3}&&\\ &&1&\\
  &&&1\end{pmatrix})
\Big)(x^{1/2}t_1,\cdots,t_q)\,dz\, dZ'\, d^\times x,
\end{align}
which is independent of the choice of $x^{1/2}$.

Recall that the support of $f^s$ is in $\Qt w_0\Qt=
\Qt w'_0\Qt$. Also note that
\[
w'_0\begin{pmatrix}1&Z'&z&xa_i\\ &I_{r-3}&&\\ &&1&\\
  &&&1\end{pmatrix}{w'_0}^{-1}
=\begin{pmatrix}1&&&\\ &I_{r-3}&&\\ &&1&\\
 xa_i &Z''&z&1\end{pmatrix},
\]
where $Z''=Z'\begin{pmatrix}1&\\&J_{r-4}\end{pmatrix}$. Hence by Lemma
\ref{L:Rallis2} (with $w_0'$ in place of $w_0$), the map
\[
(x, Z', z)\mapsto f^s\Big(\s(\begin{pmatrix}-a_i&&&\\ &1 &&\\
  &&I_{r-3}&\\ &&& a_i^{-1}\end{pmatrix})
\s(w'_0\begin{pmatrix}1&Z'&z&xa_i\\ &I_{r-3}&&\\ &&1&\\
  &&&1\end{pmatrix})
\Big)
\]
is smooth and compactly supported and hence so is the map
\[
(x, Z', z)\mapsto f^s\Big(\s(\begin{pmatrix}-a_i&&&\\ &1 &&\\
  &&I_{r-3}&\\ &&& a_i^{-1}\end{pmatrix})
\s(w'_0\begin{pmatrix}1&Z'&z&xa_i\\ &I_{r-3}&&\\ &&1&\\
  &&&1\end{pmatrix})
\Big) (x^{1/2}t_1,\cdots,t_q).
\]
Therefore one can write 
\begin{align*}
 &\epsilon_i'f^s\Big(\s(\begin{pmatrix}-a_i&&&\\ &1 &&\\ &&I_{r-3}&\\
   &&& a_i^{-1}\end{pmatrix})
\s(w'_0\begin{pmatrix}1&Z'&z&xa_i\\ &I_{r-3}&&\\ &&1&\\
  &&&1\end{pmatrix})
\Big)(x^{1/2}t_1,\cdots,t_q)\\
&=\sum_{\lambda, \phi, \phi'}\lambda(s)\phi(x)\phi'(Z', z)
\end{align*}
for some holomorphic functions $\lambda$ and smooth compactly supported
functions $\phi$ and $\phi'$ on $F$ and $F^{r-3}\times F$,
respectively. Hence to study the analytic behavior of
\eqref{E:integral2} we have only to study that of
\[
\int_{F^{\times 2}}\int_{F^{r-2}}|x|^{rs-\frac{1}{4}r+\frac{1}{2}}
\chi(x^{1/2})\eta(x)^{-1}\phi(x)\phi'(Z)\,dZ d^\times x,
\]
which is written as
\[
\int_{F^{\times 2}}|x|^{rs-\frac{1}{4}r+\frac{1}{2}}
\chi(x^{1/2})\eta(x)^{-1}\phi(x)\,d^\times x\cdot\int_{F^{r-2}}
\phi'(Z)\,dZ,
\]
where recall we have put $Z=(Z',z)$. The integral over $Z$ is
independent of $s$, and hence we have only to
consider the first integral. But one can see
\[
\int_{F^{\times 2}}|x|^{rs-\frac{1}{4}r+\frac{1}{2}}
\chi(x^{1/2})\eta(x)^{-1}\phi(x)\,d^\times x
=c\int_{F^{\times}}|y^2|^{rs-\frac{1}{4}r+\frac{1}{2}}
\chi(y)\eta(y^2)^{-1}\phi(y^2)\,d^\times y
\]
for an appropriate non-zero constant $c$. By Tate's thesis, one
knows that this integral is $L(2(rs-\frac{1}{4}r+\frac{1}{2}),
\chi\eta^{-2})$ times an entire function on $s$, where this
$L$-factor is precisely the one appearing in
(\ref{E:normalized_intertwining_w_02}). Therefore
(\ref{E:normalized_intertwining_w_02}) is an entire function on $s$.


\subsection{\bf The case $r=2$}


In Theorem \ref{T:normalized_intertwining}, we excluded the case
that $r=2$ and $\chi^2\eta^{-2}=1$. However, the argument above works
even in this case except at $s=0$; Namely Theorem
\ref{T:normalized_intertwining} holds even when $r=2$ and
$\chi^2\eta^{-2}=1$ except at $s=0$. Now at $s=0$, since for $r=2$
the inducing representation is cuspidal, the general theory
of Eisenstein series (\cite[Proposition IV. 1.11.(b)]{MW}) implies
that it is actually holomorphic at $s=0$. One can see
$(\chit\,\otimest\,\etat)_\omega=(\chit\,\otimest\,\chit)_\omega$ by
Proposition \ref{P:tensor_unique}, and
$^w(\chit\,\otimest\,\chit)_\omega=(\chit\,\otimest\,\chit)_\omega$ by
Proposition \ref{P:tensor_Weyl}. Hence
the map $A(0,\theta_{\chi, \eta}, w_1)$ is an endomorphism on
$\Ind_{\Bt}^{\GLt_2}=(\chit\,\otimest\,\chit)_\omega$. But by the
functional equation of the intertwining operator (\cite[Theorem
IV.1.10(b), p.141]{MW}) we must have $A(0,\theta_{\chi, \eta},
w_1)^2=\Id$. This implies that
on each irreducible submodule of
$\Ind_{\Bt}^{\GLt_2}(\chit\,\otimest\,\chit)_\omega$, the operator
$A(0,\theta_{\chi, \eta}, w_1)$ acts
as $\pm \Id$. Indeed, it is shown in \cite[Proposition 7.3
(ii)]{BG} that it acts as $-1$ on all of the induced space. Hence we have
\begin{Prop}\label{P:action_at_s=0}
For $r=2$ and $\chi^2\eta^{-2}=1$, the (global) intertwining operator $A(s,
\theta_{\chi, \eta}, w_1)$ is holomorphic for $\Re(s)\geq 0$ except a
possible simple pole at $s=\frac{1}{4}$. Moreover $A(0,\theta_{\chi,
  \eta}, w_1)$ acts as $-\Id$.
\end{Prop}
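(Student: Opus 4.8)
The plan is to leverage what has already been established for Theorem~\ref{T:normalized_intertwining} and to supply two extra ingredients concentrated at the single point $s=0$. For $r=2$ the argument proving Theorem~\ref{T:normalized_intertwining} --- reduction to the big Bruhat cell via the Rallis lemma (Lemma~\ref{L:Rallis}) followed by Tate's thesis --- applies verbatim, so that the modified local operators are entire and $A^{\ast}(s,\theta_{\chi,\eta},w_1)f^s$ equals the complete $L$-function $L(r(2s+\frac{1}{2})-r+1,\chi^2\eta^{-2})=L(4s,\chi^2\eta^{-2})$ times a holomorphic function of $s$; when $\chi^2\eta^{-2}=1$ this is the completed Dedekind zeta $\xi_F(4s)$, whose only poles are the simple ones at $s=0$ and $s=\frac{1}{4}$. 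For $\Re(s)>0$ the normalizing factor $L^S(4s+1,\chi^2\eta^{-2})$ is holomorphic and non-vanishing, so on that range $A(s,\theta_{\chi,\eta},w_1)$ is holomorphic away from $s=\frac{1}{4}$; the point $s=0$ is not settled this way, precisely because there $L^S(4s+1,\mathbf 1)=L^S(1,\mathbf 1)$ itself has a pole and holomorphy of $A$ no longer follows formally from that of $A^{\ast}$. To handle $s=0$ I would observe that for $r=2$ the parabolic $Q=P_{1,1}$ is the Borel, its Levi $\Mt_Q=\Tt$ is the metaplectic preimage of the torus and therefore has no proper parabolic, so the inducing representation $\theta_{\chi,\eta}=(\chit\otimest\etat)_\omega$ is cuspidal; by the general theory of Eisenstein series on the cover, \cite[Proposition IV.1.11(b)]{MW}, the Eisenstein series has no pole on the unitary axis, and since its $Q$-constant term is $f^s+A(s,\theta_{\chi,\eta},w_1)f^s$, this yields holomorphy of $A(s,\theta_{\chi,\eta},w_1)$ at $s=0$. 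This gives the first assertion.

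For the ``Moreover'' I would first make the inducing datum symmetric. Since $\chi^2\eta^{-2}=1$ the character $\eta\chi^{-1}$ is quadratic, so, viewing $\etat$ and $\chit$ as characters of $\GLt_1$ trivial on $\GLtt_1$, Proposition~\ref{P:tensor_unique} gives $(\chit\otimest\etat)_\omega\cong(\chit\otimest\chit)_\omega$, while Proposition~\ref{P:tensor_Weyl} gives $^{w_1}(\chit\otimest\chit)_\omega\cong(\chit\otimest\chit)_\omega$ because $w_1=\left(\begin{smallmatrix}&1\\1&\end{smallmatrix}\right)$ merely interchanges the two $\GLt_1$-factors. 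Since $\delta_Q^0=1$, it follows that $A(0,\theta_{\chi,\eta},w_1)$ is an endomorphism of the single space $\Ind_{\Bt}^{\GLt_2}(\chit\otimest\chit)_\omega$. The functional equation for the intertwining operator, \cite[Theorem IV.1.10(b)]{MW}, applied with $w_1^2=1$, then forces $A(0,\theta_{\chi,\eta},w_1)\circ A(0,\theta_{\chi,\eta},w_1)=\Id$, so this operator is an involution and acts as $\pm\Id$ on each irreducible constituent. Finally I would invoke \cite[Proposition 7.3(ii)]{BG} --- the one $\GLt_2$-statement from \cite{BG} whose proof we rely on --- to pin down the sign: the operator acts as $-\Id$ on all of $\Ind_{\Bt}^{\GLt_2}(\chit\otimest\chit)_\omega$.

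The step I expect to require the most care is the appeal to the general theory of Eisenstein series for the double cover $\GLt_2(\A)$: one must be confident that \cite[Proposition IV.1.11(b)]{MW} and \cite[Theorem IV.1.10(b)]{MW} --- the latter with a normalization introducing no extra constant at $s=0$, which is what makes $A(0,\theta_{\chi,\eta},w_1)^2=\Id$ rather than a scalar multiple of the identity --- transfer to the metaplectic setting and to the torus datum $(\chit\otimest\etat)_\omega$, which is a representation of a Heisenberg-type group rather than of an honest torus. Once that is granted, and with the sign handed to us by \cite{BG}, the remaining steps are purely formal.
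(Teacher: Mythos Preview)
Your proposal is correct and follows essentially the same route as the paper: the argument for Theorem~\ref{T:normalized_intertwining} is noted to go through for $r=2$ except at $s=0$, holomorphy at $s=0$ is obtained from the cuspidality of the torus datum via \cite[Proposition IV.1.11(b)]{MW}, the identifications $(\chit\otimest\etat)_\omega\cong(\chit\otimest\chit)_\omega$ and $^{w_1}(\chit\otimest\chit)_\omega\cong(\chit\otimest\chit)_\omega$ come from Propositions~\ref{P:tensor_unique} and~\ref{P:tensor_Weyl}, the functional equation \cite[Theorem IV.1.10(b)]{MW} gives $A(0,\theta_{\chi,\eta},w_1)^2=\Id$, and the sign is fixed by \cite[Proposition 7.3(ii)]{BG}. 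One small wording slip: it is not that $\etat$ and $\chit$ themselves are trivial on $\GLtt_1$, but that their ratio $\eta\chi^{-1}$ (pulled back) is, which is exactly what Proposition~\ref{P:tensor_unique} requires.
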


\quad


\section{\bf The unnormalized Eisenstein series}


Now we are ready to state the main theorem on the analytic behavior of
the (unnormalized) Eisenstein series. Let $\theta=\theta_{\chi, \eta}$
or $\vartheta_{\chi, \eta}$, depending
on the parity of $r$. In this paper, we consider the
Eisenstein series associated to the induced representation
\begin{equation}\label{E:induced_space}
\Ind_{\Qt(\A)}^{\GLt_r(\A)}\theta\otimes\delta_Q^s.
\end{equation}
Namely for each $f^s\in
\Ind_{\Qt(\A)}^{\GLt_r(\A)}\theta\otimes\delta_Q^s$, we let
\begin{equation}\label{E:Eisenstein_series}
E(g, s; f^s)=\sum_{\gamma\in Q(F)\backslash\GL_r(F)}f^s(\s(\gamma) g;1)
\end{equation}
for $g\in\GLt_r(\A)$, where we view each section $f^s$ as a function
\[
f^s:\GLt_r(\A)\rightarrow\text{ space of } \theta\otimes\delta_Q^{s+\frac{1}{2}},
\]
and by $f^s(\s(\gamma) g;1)$ we mean the automorphic form
$f^s(\s(\gamma) g)$ on $\MQt(\A)$ evaluated at the identity $1$. Also
for fixed $\s(\gamma) g$,  we often write $f^s(\s(\gamma) g;-)$, which
is viewed as an automorphic form in the space of
$\theta\otimes\delta_Q^s$, namely the function $\tilde{m}\mapsto
f^s(\s(\gamma) g;\tilde{m})$ is an automorphic form on $\MQt(\A)$.

The main theme of this section is to prove 
\begin{Thm}\label{T:main1}
The Eisenstein series $E(g, s;f^s)$ is
holomorphic for $\Re(s)\geq 0$ except that it possibly has a simple
pole at $s=\frac{1}{4}$, when $\chi^2\eta^{-2}=1$ and $r=2q$, or
$\chi\eta^{-2}=1$ and $r=2q+1$.
\end{Thm}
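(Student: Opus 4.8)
The plan is to reduce everything to the constant term of $E(g,s;f^s)$ along $Q$ and then invoke Theorem \ref{T:normalized_intertwining}. First I would compute this constant term. Since $Q$ is a maximal parabolic, the Bruhat decomposition of $Q(F)\backslash\GL_r(F)$ has exactly two relevant double cosets, represented by $1$ and by the Weyl element $w_1$ of (\ref{E:w_1}); unfolding $E(g,s;f^s)$ along these and integrating over $N_Q(F)\backslash N_Q(\A)$ --- with the metaplectic cocycle bookkeeping handled by Lemma \ref{L:section_s}, so that the rational points $\gamma$ and the unipotent integrations split off via $\s$ --- gives
\[
E_Q(g,s;f^s)=f^s(g;1)+A(s,\theta,w_1)f^s(g;1),
\]
where $E_Q$ denotes the constant term along $N_Q$ and $A(s,\theta,w_1)$ is the intertwining operator of Section \ref{S:normalized_intertwining}. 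Because $f^s$ is entire in $s$, the poles of $E_Q$ in any region are precisely those of $A(s,\theta,w_1)f^s$.

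Next I would appeal to the general theory of Eisenstein series. The inducing datum $\theta$ (either $\theta_{\chi,\eta}$ or $\vartheta_{\chi,\eta}$) is square-integrable modulo center, being built from the residual representations $\theta_\chi$, $\vartheta_\chi$ via Proposition \ref{P:tensor_cuspidal}; hence by \cite{MW} the Eisenstein series $E(g,s;f^s)$ is holomorphic at a point $s_0$ with $\Re(s_0)\ge 0$ if and only if its constant term $E_Q$ is, the constant terms along the remaining standard parabolics producing no further singularities in this range. Combined with the previous step and Theorem \ref{T:normalized_intertwining}, this shows that for $\Re(s)\ge 0$ the series $E(g,s;f^s)$ is holomorphic except possibly at $s=\tfrac14$ and $s=\tfrac14-\tfrac1{2r}$, and that these can occur only when $\chi^2\eta^{-2}=1$ (if $r=2q$), respectively $\chi\eta^{-2}=1$ (if $r=2q+1$).

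It then remains to rule out the point $s=\tfrac14-\tfrac1{2r}$, which is \emph{the main obstacle}. Assume the relevant character condition, so the $L$-function entering Theorem \ref{T:normalized_intertwining} is the completed Dedekind zeta. By Lemma \ref{L:spherical_section} and the factorization (\ref{E:normalized_intertwining}), for a factorizable $f^s$ the operator $A(s,\theta,w_1)f^s$ carries the incomplete ratio $L^S(r(2s+\tfrac12)-r+1,\chi^2\eta^{-2})/L^S(r(2s+\tfrac12),\chi^2\eta^{-2})$ (resp. with $\chi\eta^{-2}$), times the normalized local operators at the places of $S$, which are holomorphic by the proof of Theorem \ref{T:normalized_intertwining}. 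At $s=\tfrac14-\tfrac1{2r}$ the argument of the numerator equals $0$, and since $S$ contains a finite place --- a place above $2$ --- the corresponding Euler factor forces $\zeta_F^S(0)=0$, while the denominator $\zeta_F^S(r-1)$ is finite and nonzero for $r\ge 3$. Matching this zero against the pole orders of the local intertwining operators $A_v(s,\theta_v,w_1)$ at that point --- each of which is bounded, via the proof of Theorem \ref{T:normalized_intertwining}, by the order of $L_v(r(2s+\tfrac12)-r+1,\,\cdot\,)$ there --- one finds that $A(s,\theta,w_1)f^s$, hence $E_Q$, hence $E$, is holomorphic at $s=\tfrac14-\tfrac1{2r}$. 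The lone exception is $r=2$, where $\tfrac14-\tfrac1{2r}=0$ and $\zeta_F^S(r(2s+\tfrac12))$ is itself singular; there the inducing datum is cuspidal and Proposition \ref{P:action_at_s=0} (or \cite[Proposition IV.1.11(b)]{MW}) yields holomorphy at $s=0$ directly. The delicate part of the argument is precisely this last order count at $s=\tfrac14-\tfrac1{2r}$: tracking the ramified (and archimedean) contributions to the pole of the product of local intertwining operators against the zero of $\zeta_F^S$ produced by the places of $S$.
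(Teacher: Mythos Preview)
Your argument has two genuine gaps. First, the constant-term formula $E_Q(g,s;f^s)=f^s(g;1)+A(s,\theta,w_1)f^s(g;1)$ is not correct for $r>2$: the parabolic $Q=P_{r-1,1}$ is not self-associate and $\theta$ is residual rather than cuspidal, so unfolding along $N_Q$ (or along $N_{^\iota Q}$) produces, besides the intertwining-operator piece, an \emph{Eisenstein series on $\GLt_{r-1}$} coming from the identity double coset (see the paper's decomposition $E_P=E_{P,\Id}+E_{P,w_1}$ along $P=P_{1,r-1}$, where $E_{P,\Id}$ is a genuine sum over $P^{r-1}_{r-2,1}(F)\backslash\GL_{r-1}(F)$). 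That lower-rank Eisenstein series can itself carry poles, and the paper treats it by \emph{induction on $r$}; your reduction to $A(s,\theta,w_1)$ alone simply drops this term. Second, even granting that reduction, the order count at $s_0=\tfrac14-\tfrac{1}{2r}$ (with $\chi^2\eta^{-2}=1$, say) fails by exactly one. Writing $A(s,\theta,w_1)f^s=\dfrac{\zeta_F^S(r(2s+\tfrac12)-r+1)}{\zeta_F^S(r(2s+\tfrac12))}\cdot\bigotimes_{v\in S}A_v f_v^s\otimes[\text{unram.}]$, at $s=s_0$ the numerator is $\zeta_F^S(0)$, which vanishes to order $(r_1+r_2-1)+|S_{\mathrm{fin}}|=|S|-1$ (from $\zeta_F(0)$ and the removed Euler factors), while the denominator $\zeta_F^S(r-1)$ is finite nonzero for $r\ge 3$. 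Theorem~\ref{T:normalized_intertwining} bounds the pole of each $A_v$ ($v\in S$) by that of $L_v(0,1)$, which is \emph{simple for every $v\in S$}, archimedean or not; so $\prod_{v\in S}A_v$ may have a pole of order $|S|$. The net is a possible simple pole, not holomorphy, and nothing in your proposal recovers the missing order.

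This is exactly the obstruction the paper overcomes by a different mechanism: after the inductive step locates the identity term's possible pole at the same point $s_0$, the paper recomputes the constant term along the Borel as $E_B=\sum_{i=0}^{r-1}c_i$, shows each $c_i$ with $i<r-2$ is holomorphic at $s_0$, and then relates $c_{r-1}$ and $c_{r-2}$ through a $\GLt_2$-intertwining operator $M_2(t)$ with $t=\tfrac12 rs+\tfrac18-\tfrac14 i$. At $s=s_0$ one gets $t=\tfrac14$ for $i=r-2$ (producing the pole) and $t=0$ for $i=r-1$, where Proposition~\ref{P:action_at_s=0} gives $M_2(0)=-\Id$; hence $\Res_{s=s_0}c_{r-1}=-\Res_{s=s_0}c_{r-2}$ and the residues cancel. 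This is a cancellation between two distinct global terms, not a local pole-order bound, and it cannot be seen from the factorization of $A(s,\theta,w_1)$ alone.
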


In what follows we will give a proof of the theorem. The basic
idea seems to be standard in that we will compute a constant term of
the Eisenstein series and argue inductively on $r$. Indeed, the most
of the ideas
(at least for the case $\theta=\theta_{\chi, \eta}$) are already present in \cite{BG} and we
will borrow many of the ideas from there. Let us note, however, that
the cuspidal support of our Eisenstein series differs for the two
cases $\theta=\theta_{\chi, \eta}$ and $\theta=\vartheta_{\chi,
  \eta}$, and hence we will compute different constant terms for
those cases.


\subsection{\bf The base step of induction}

The base step is $r=2$ for the
case $\theta=\theta_{\chi, \eta}$ and $r=3$ for the case
$\theta=\vartheta_{\chi, \eta}$ and $\chi^{1/2}$ does not exist. (If
$r$ is odd and $\chi^{1/2}$ exists, then we have
$\theta=\theta_{\chi^{1/2},\eta}$ and hence the base step will be the case
$r=2$.)

Consider the case $r=2$. Then $\theta=\theta_{\chi,
  \eta}=(\chit\,\otimest\etat)_\omega$. The analytic property of the
Eisenstein series $E(g, s; f^s)$ is determined by the contant term $E_{N_{B}}(g, s; f^s)$
along the unipotent radical $N_{B}$ of $B$. By the standard calculation, one has
\begin{equation}\label{E:no_constant_term}
E_B(g, s; f^s)
=f^s(g)+\int_{N_B}f^s(\s(w_1^{-1}n)g)\,dn
=f^s(g)+A(s,\theta_{\chi,\eta},w_1)f^s(g),
\end{equation}
where $w_1$ is as in \eqref{E:w_1}. By Theorem
\ref{T:normalized_intertwining} (and Proposition
\ref{P:action_at_s=0})
above we know that $A(s,\theta, w_1)$ is
holomorphic for $\Re(s)\geq 0$ except that if $\chi^2\eta^{-1}=1$, it
has a possible pole at $\frac{1}{4}$. Hence Theorem
\ref{T:main1} holds for $r=2$. (Though we do not need this fact, let us
mention that at $s=\frac{1}{4}$, the
intertwining operator does have a pole and the residues generate the
non-twisted exceptional representation for $r=2$ of determinantal
character $\chi$, namely the Weil representation $\rr_{\chi^2}$.)

Next consider the case $r=3$ (so
$\vartheta_{\chi,\eta}=(\rr_\chi\otimest\,\etat)_\omega $) and $\chi^{1/2}$ does not
exist. Since $\chi^{1/2}$ does not exist, necessarily
$\chi\eta^{-2}\neq 1$. Then the inducing representation
$\vartheta_{\chi,\eta}$ is cuspidal, since $\rr_\chi$ is
cuspidal, and the metaplectic tensor product preserves
cuspidality (Proposition \ref{P:tensor_cuspidal}). Moreover the Levi
$\GLt_2\timest\GLt_1$ is maximal and non-self-conjugate, and hence the
Eisenstein series $E(g, s;f^s)$ is entire as desired.


\subsection{\bf The induction step for $r=2q$}


Now we will prove the induction step. Let us first consider the case
$r=2q>2$, namely $\theta=\theta_{\chi, \eta}$. This basically coincides with
the case treated by Bump and Ginzburg in \cite{BG}. It seems to the
author, however, that some of their arguments
cannot be justified without the theory of metaplectic tensor products
developed in \cite{Takeda2}. Also in \cite{BG}, they use the induction
argument for the normalized Eisenstein series. However, as we will point out
later, their induction argument does not seem to work because the finite set $S$
used to normalize the Eisenstein series depends not only on $\chi$ and
$\eta$, but also on the choice of the section $f^s$, which makes the
induction hypothesis not applicable. (This is another error in
\cite{BG} which does not seem to have been pointed out anywhere else.)
Indeed, to obtain the holomorphy for the normalized Eisenstein series,
one needs to use the functional equation of the Eisenstein series as
we will do in a later section.
Moreover there are a quite few places in
\cite{BG} where important computations are omitted. For those reasons,
we will write out the computations in detail, though our
computations are quite parallel to those in \cite{BG}.

Now for the case at hand, the cuspidal support of our Eisenstein series $E(g,s;f^s)$ is
the Borel subgroup and hence the poles of the Eisenstein series are
precisely the poles of the constant term along any parabolic. So in
particular in this subsection we let 
\[
P=P_{1,r-1}=M_PN_P\subseteq\GL_r
\]
be the $(1,r-1)$-parabolic, and will consider the constant term along
the unipotent radical $N_P$ of this parabolic. The constant term along
$N_P$ is computed as
\begin{align}\label{E:constant_term_even1}\allowdisplaybreaks
E_{P}(g,s; f^s)&=\int_{N_P(F)\backslash N_P(\A)}
E(\s(n)g,s;f^s)\,dn\notag\\
&=\int_{N_P(F)\backslash
  N_P(\A)} \sum_{\gamma\in Q(F)\backslash\GL_r(F)}
f^s(\s(\gamma)\s(n)g;1)\,dn\notag\\
&=\int_{N_P(F)\backslash
  N_P(\A)} \sum_{\gamma\in Q(F)\backslash\GL_r(F)\slash
  N_P(F)}\sum_{n'\in N_P(F)^{\gamma^{-1}}\backslash N_P(F)}
f^s(\s(\gamma n')\s(n)g;1)\,dn\notag\\
&=\sum_{\gamma\in Q(F)\backslash\GL_r(F)\slash
  N_P(F)}\int_{N_P(F)\backslash
  N_P(\A)} \sum_{n'\in N_P(F)^{\gamma^{-1}}\backslash N_P(F)}
f^s(\s(\gamma n'n)g;1)\,dn\notag\\
&=\sum_{\gamma\in Q(F)\backslash\GL_r(F)\slash
  N_P(F)}\int_{N_P(F)^{\gamma^{-1}}\backslash
  N_P(\A)}f^s(\s(\gamma n)g;1)\,dn,
\end{align}
where $N_P(F)^{\gamma^{-1}}=N_P(F)\cap\gamma^{-1}Q(F)\gamma$ and also
for the fourth equality we used Lemma \ref{L:section_s}.

By the Bruhat decomposition, we have
\[
\GL_r(F)=
\bigcup_{w\in\{1, w_1\}} Q(F)w^{-1} P(F),
\]
where $w_1$ is as in \eqref{E:w_1}. Accordingly, we have
\[
Q\backslash\GL_r\slash N_P
=\bigcup_{w\in\{1,w_1\}}Q\backslash Qw^{-1}P\slash N_P
=\bigcup_{w\in\{1,w_1\}}Q\backslash Qw^{-1} M_P
=\bigcup_{w\in\{1,w_1\}}M_P\cap wQw^{-1}\backslash M_P,
\]
where the last equality is given by the map $\gamma=w^{-1}m\mapsto m$ for
$m\in M_P$. Notice that
\[
M_P\cap wQw^{-1}\backslash M_P=
\begin{cases}
P_{r-2, 1}^{r-1}\backslash \GL_{r-1},&\text{if $w=1$};\\
1,&\text{if $w=w_1$},
\end{cases}
\]
where $\GL_{r-1}$ is viewed as a subgroup of $\GL_r$ embedded in the
lower right corner, and $P_{r-2, 1}^{r-1}$ is the $(r-2, 1)$-parabolic
of $\GL_{r-1}$. (Recall the notation from the notation section.)

Using this decomposition, one can write (\ref{E:constant_term_even1})
as
\begin{align*}
&\sum_{\gamma\in Q(F)\backslash\GL_r(F)\slash
  N_P(F)}\int_{N_P(F)^{\gamma^{-1}}\backslash
  N_P(\A)}f^s(\s(\gamma n)g;1)\,dn\notag\\
=&\sum_{w\in\{1,w_1\}}\sum_{m\in M_P(F)\cap wQ(F)w^{-1}\backslash
  M_P(F)}
\int_{N_P(F)^{m^{-1}w}\backslash
  N_P(\A)}f^s(\s(w^{-1}m n)g;1)\,dn\notag\\
=&\left(\sum_{m\in P_{r-2, 1}^{r-1}(F)\backslash\GL_{r-1}(F)}
\int_{N_P(F)\backslash
  N_P(\A)}f^s(\s(m n)g;1)\,dn\right)
+\int_{N_P(F)^{w_1}\backslash
  N_P(\A)}f^s(\s(w_1^{-1}n)g;1)\,dn\\
=&E_P(g,s;f^s)_{\Id}+E_P(g,s;f^s)_{w_1},
\end{align*}
where we have set
\begin{align*}
E_P(g,s;f^s)_{\Id}&:=\sum_{m\in P_{r-2, 1}^{r-1}(F)\backslash\GL_{r-1}(F)}
\int_{N_P(F)\backslash
  N_P(\A)}f^s(\s(m n)g;1)\,dn;\\
E_P(g,s;f^s)_{w_1}&:=\int_{N_P(\A)}f^s(\s(w_1^{-1}n)g;1)\,dn.
\end{align*}
Note that we used $N_P(F)^{w_1}=1$. To sum up, we have obtained
\begin{equation}\label{E:constant_term_even2}
E_P(g,s;f^s)=E_P(g,s;f^s)_{\Id}+E_P(g,s;f^s)_{w_1}.
\end{equation}

In what follows, we will show that the non-identity term
$E_P(g,s;f^s)_{w_1}$ is holomorphic for $\Re(s)\geq 0$ except
when $\chi^{2}\eta^{-2}=1$, in which case it has possible poles at  
$s=\frac{1}{4}$ and $s=\frac{1}{4}-\frac{1}{2r}$, and then the
identity term $E_P(g,s;f^s)_{\Id}$ is holomorphic for $\Re(s)\geq 0$ except
when $\chi^{2}\eta^{-2}=1$, in which case it has a possible pole at  
$s=\frac{1}{4}-\frac{1}{2r}$. Then we will show that the
possible poles at $s=\frac{1}{4}-\frac{1}{2r}$ (if exist at all)
coming from both terms cancel each other. This will complete the induction.

\quad

\noindent\underline{\bf The non-identity term $E_P(g,s;f^s)_{w_1}$}:

First consider the non-identity term. Note that the non-identity term
is written as
\[
E_P(g,s;f^s)_{w_1}=\int_{N_P(\A)}f^s(\s(w_1^{-1}n)g;1)\,dn
=A(s, \theta_{\chi,\eta}, w_1)f^s,
\]
where $A(s, \theta_{\chi,\eta}, w_1)$ is the intertwining operator
studied in Section \ref{S:normalized_intertwining}. From Theorem
\ref{T:normalized_intertwining}, we know that $A(s,
\theta_{\chi,\eta}, w_1)$  is holomorphic for $\Re(s)\geq 0$ except
when $\chi^{2}\eta^{-2}=1$, in which case it has possible poles at  
$s=\frac{1}{4}$ and $s=\frac{1}{4}-\frac{1}{2r}$.

\quad

\noindent\underline{\bf The identity term $E_P(g,s;f^s)_{\Id}$}:

One can write
\[
E_P(g,s;f^s)_{\Id}
=\sum_{m\in P_{r-2, 1}^{r-1}(F)\backslash\GL_{r-1}(F)}
\int_{N_{P'}(F)\backslash
  N_{P'}(\A)}f^s(\s(m)g;\s(n'))\,dn',
\]
where 
\[
P'=P^{r-1,1}_{1,r-2,1}=P\cap M_Q=(\GL_1\times\GL_{r-2}\times\GL_1)N_{P'},
\]
so $P'$ is the parabolic subgroup of
$\GL_{r-1}\times\GL_1$ whose Levi is $\GL_1\times\GL_{r-2}\times\GL_1$.
If one views $f^s(\s(m)g;-)$ as an automorphic
form in $\theta\otimes\delta_Q^{s+1/2}$ on $\MQt(\A)$ as explained at
the beginning of this section, the integral in the above
sum is just the constant term
along $N_{P'}$. But since $f^s(\s(m)g;-)\in\theta\otimes\delta_Q^{s+1/2}$,
we need to compute the constant term of the residual representation
$\theta\otimes\delta_Q^{s+1/2}$ along $N_{P'}$. Recall that the exceptional
representation $\theta$ is constructed as the residue of the
Eisenstein series associated to the
induced representation
$\Ind_{\Bt^{r-1,1}}^{\MQt}(\chit\otimest\cdots\otimest\chit\otimest\etat)_\omega^\nu$
at $\nu=\rho_{B^{r-1,1}}/2$, where $B^{r-1,1}$ is the Borel subgroup of
$\GL_{r-1}\times\GL_1$. Namely it is generated by
\[
\underset{\nu=\frac{1}{2}\rho_{B^{r-1,1}}}{\Res}E^{\MQt}(-, \varphi^\nu;\nu)
\]
for $\varphi^\nu\in
\Ind_{\Bt^{r-1,1}}^{\MQt}(\chit\otimest\cdots\otimest\chit\otimest\etat)_\omega^\nu$. (Here
the superscript for the Eisenstein series is the group on which the
Eisenstein series is defined. We will use this convention in what
follows as well.) But the constant
term of the residue is the same as the residue of the constant term,
and hence one first needs to compute the constant term $E^{\MQt}_{N_{P'}}(-,
\varphi^\nu;\nu)$ of $E^{\MQt}(-,
\varphi^\nu;\nu)$ along $N_{P'}$. For this, one can use
\cite[Proposition (ii), p.92]{MW}, and obtain
\[
E^{\MQt}_{P'}(m',\varphi^\nu;\nu)
=\sum_w
E^{\Mt_{p'}}(m',M(w,\nu)\varphi^\nu),
\]
where $m'\in\Mt_{p'}$ and $w$ runs through all the Weyl
group elements of $\GL_{r-1}\times\GL_1$
such that $w^{-1}(\alpha)>0$ for all the positive roots $\alpha$ that are
in $M_{P'}$. Note that 
\[
M(w,\nu)\varphi^\nu\in
\Ind_{\Bt^{r-1,1}}^{\MQt}\,^w(\chit\otimest\cdots\otimest\chit\otimest\etat)_\omega^{w\nu},
\]
but it is viewed as a map on $\Mt_{P'}$ by
restriction. Hence we actually have
\[
M(w,\nu)\varphi^\nu\in
\Ind_{\Bt^{1,r-1,1}}^{\Mt_{P'}}\,^w(\chit\otimest\cdots\otimest\chit\otimest\etat)_\omega^{w\nu}
\delta_{B^{r-1,1}}^{1/2}\delta_{B^{1,r-1,1}}^{-1/2}.
\]
(Note that since our induction is normalized, we need the
modulus characters $\delta_{B^{r-1,1}}^{1/2}\delta_{B^{1,r-1,1}}^{-1/2}$.)

One can see that by using the language of permutations, $w$
runs through all the elements of the form
\[
w=(12\cdots k),\quad \text{for $k=1,\dots,r-1$}.
\]
We need to compute the residue at $\nu=\rho_{B^{r-1,1}}/2$ of
each
$E^{\Mt_{P'}}(m',M(w,\nu)\varphi^\nu)$
for such $w$. But one can see that this Eisensteins series has a
residue at $\nu=\rho_{B^{r-1,1}}/2$ only for $w=(12\cdots r-1)$ for
the following reason: Since the cuspidal support of the Eisenstein
series $E^{\Mt_{P'}}$ is the Borel, the analytic behavior is determined by the
constant term $E^{\Mt_{P'}}_{B^{1,r-1,1}}(m', M(w,\nu)\varphi^\nu)$
along the Borel $B^{1,r-1,1}$ of $M_{P'}$. By using \cite[Proposition
(i), p.92]{MW}, one can see
\[
E^{\Mt_{P'}}_{B^{1,r-1,1}}(m',M(w,\nu)\varphi^\nu)
=\sum_{w'}M(w',{w\nu})\circ M(w,\nu)\varphi^\nu,
\]
where $w'$ runs through all the elements in the Weyl group of
$M_{P'}$. We have
\[
M(w',{w\nu})\circ M(w,\nu)=M(w'w,\nu).
\]
But we know from \cite[Theorem II.1.3]{KP} that the intertwining operator $M(w'w,\nu)
$ for the exceptional representation
$(\chit\otimest\cdots\otimest\chit\otimest\etat)_\omega^\nu$ has a
residue at $\nu=\rho_{B^{r-1,1}}/2$ only when $w'w$ is the longest
element, which implies $w$ must be of the form $(12\cdots
r-1)$. Therefore we have
\[
\underset{\nu=\frac{1}{2}\rho_{B^{r-1,1}}}{\Res}E^{\MQt}_{P'}(m',\varphi^\nu;\nu)
=\underset{\nu=\frac{1}{2}\rho_{B^{r-1,1}}}{\Res}E^{\Mt_{P'}}(m',M(w,\nu)\varphi^\nu),
\]
where $w=(12\cdots r-1)$.

Now as in the notation section let
\[
\nu=s_1e_1+\cdots+s_{r-1}e_{r-1}\in\Phi_{B^{r-1,1}}(\C)\cong\C^{r-2}
\]
for $s_i\in\C$ with $s_1+\cdots+s_{r-1}=0$. With this notation, for
$w=(12\cdots r-1)$ we have
\begin{align*}
w\nu=&s_{r-1}e_1+s_1e_2+s_2e_3+\cdots+s_{r-2}e_{r-1}\\
=&s_{r-1}e_1+(s_1+\frac{s_{r-1}}{r-2})e_2+(s_2+\frac{s_{r-1}}{r-2})e_3+\cdots+(s_{r-2}+\frac{s_{r-1}}{r-2})e_{r-1}\\
&-\frac{s_{r-1}}{r-2}(e_2+\cdots+e_{r-1})\\
=&\frac{s_{r-1}}{r-2}\cdot(2\rho_{P'})+(s_1+\frac{s_{r-1}}{r-2})e_2+(s_2+\frac{s_{r-1}}{r-2})e_3+\cdots+(s_{r-2}+\frac{s_{r-1}}{r-2})e_{r-1}\\
=&\frac{s_{r-1}}{r-2}\cdot(2\rho_{P'})+\nu'
\end{align*}
where we have set
\[
\nu'=(s_1+\frac{s_{r-1}}{r-2})e_2+(s_2+\frac{s_{r-1}}{r-2})e_3+\cdots+
(s_{r-2}+\frac{s_{r-1}}{r-2})e_{r-1}.
\]
Note that $\nu'\in\Phi_{B^{1,r-2,1}}(\C)\cong\C^{r-3}$. Now we have
\[
\frac{1}{2}\rho_{B^{r-1,1}}
=\frac{1}{4}\big((r-2)e_1+(r-4)e_2+\cdots+(2-r)e_{r-1}\big)
\]
and hence
\begin{align*}
\frac{1}{2}w\rho_{B^{r-1,1}}
&=\frac{1}{4}\Big(-2\rho_{P^{r-1,1}_{1,r-2,1}}+(r-3)e_2+(r-5)e_3
+\cdots+(3-r)e_{r-1}\Big)\\
&=-\frac{1}{2}\rho_{P^{r-1,1}_{1,r-2,1}}+\frac{1}{2}\rho_{B^{1,r-2,1}}.
\end{align*}
But here $-\frac{1}{2}\rho_{P^{r-1,1}_{1,r-2,1}}$  is just a character on the Levi
$M_{P^{r-1,1}_{1,r-2,1}}=\GL_1\times\GL_{r-2}\times\GL_1$ which acts as
\[
(a, g_{r-2}, b)\mapsto 
|a|^{-\frac{r-2}{4}} |{\det}(g_{r-2})|^{\frac{1}{4}}
\]
for $(a, g_{r-2}, b)\in \GL_1\times\GL_{r-2}\times\GL_1$. Hence for
$m'\in M_{P'}=\GL_1\times\GL_{r-2}\times\GL_1$, we have
\[
\underset{\nu=\frac{1}{2}\rho_{B^{r-1,1}}}{\Res}E^{\Mt_{P'}}(m',M(w,\nu)\varphi^\nu)
=\underset{\nu'=\frac{1}{2}\rho_{B^{1,r-2,1}}}{\Res}E^{\Mt_{P'}}(m',{\varphi'}^{\nu'})
\]
where 
\[
{\varphi'}^{\nu'}\in\Ind_{\Bt^{1,r-2,1}}^{\Mt_{P'}}(\chit|-|^{-\frac{r-2}{4}}\,\otimest\,
(\underbrace{\chit\,\otimest\cdots\otimest\,\chit}
_{r-2 \text{ times }})|{\det}_{r-2}|^{\frac{1}{4}}\,\otimest\,\etat)_\omega^{\nu'}
\delta_{B^{r-1,1}}^{1/2}\delta_{B^{1,r-2,1}}^{-1/2}.
\]
By computing $\delta_{B^{r-1,1}}^{1/2}\delta_{B^{1,r-2,1}}^{-1/2}$,
this induced representation is written as
\[
\Ind_{\Bt^{1,r-2,1}}^{\Mt_{P'}}(\chit|-|^{\frac{r-2}{4}}\,\otimest\,
(\chit\,\otimest\cdots\otimest\,\chit)|{\det}_{r-2}|^{-\frac{1}{4}}\,
\otimest\,\etat)_\omega^{\nu'}.
\]

The inducing representation in this induced representation is
the exceptional character (with some character twits) and hence the
residue at $\nu'=\frac{1}{2}\rho_{B^{1,r-2,1}} $ of the Eisenstein
series for this induced representation gives rise to the exceptional
representation of $\Mt_{P'}$. But by the compatibility of parabolic
inductions for metaplectic tensor products (Proposition \ref{P:tensor_parabolic}),
one can see that
\[
\underset{\nu'=\frac{1}{2}\rho_{B^{1,r-2,1}}}{\Res}E^{\Mt_{P'}}(-,{\varphi'}^{\nu'})
\in(\chit|-|^{\frac{r-2}{4}}\,\otimest\,\theta'|{\det}_{r-2}|^{-\frac{1}{4}}\,\otimest\,\etat)_\omega,
\]
where $\theta'$ is the exceptional representation of $\GLt_{r-2}$
which is the unique irreducible quotient of the induced representation
\[
\Ind_{\Bt^{r-2}}^{\GLt_{r-2}}(\chit\,\otimest\cdots\otimest\,\chit)_{\omega'}
\otimes\delta_{B^{r-2}}^{1/4}
\]
for an appropriate choice of $\omega'$.

Hence for each fixed $m\in\GL_{r-1}(F)$, the function on
$\Mt_{P'}(\A)$ given by
\[
m'\mapsto \int_{N_{P'}(F)\backslash
  N_{P'}(\A)}f^s(\s(m)g;\s(n')m')\,dn'
\]
is an element in
$(\chit|-|^{\frac{r-2}{4}}\,\otimest\,\theta'|{\det}_{r-2}|^{-\frac{1}{4}}\,\otimest\,\etat)_\omega
\otimes\delta_Q^{s+\frac{1}{2}}$, where $\delta_Q^{s+\frac{1}{2}}$ is actually the restriction
$\delta_Q^{s+\frac{1}{2}}|_{P'}$ to $P'$. One can compute
\[
\delta_Q^{s+\frac{1}{2}}(a, g_{r-2},
b)=|a|^{s+\frac{1}{2}}|{\det}(g_{r-2})|^{s+\frac{1}{2}}
|b|^{-(r-1)(s+\frac{1}{2})}.
\]

Accordingly the function $f_{N_{P'}}^s$ on $\MPt(A)$ defined by
\[
f_{N_{P'}}^s(g)= \int_{N_{P'}(F)\backslash
  N_{P'}(\A)}f^s(g;\s(n'))\,dn'
\]
is in
\begin{align*}
&\Ind_{\widetilde{P'}(\A)}^{\MPt(\A)}(\chit|-|^{\frac{r-2}{4}}\,\otimest\,\theta'|{\det}_{r-2}|^{-\frac{1}{4}}
\,\otimest\,\etat)_\omega\otimes\delta_Q^{s+\frac{1}{2}}\delta_{P'}^{-1/2}\\
=&\Ind_{\widetilde{P'}(\A)}^{\MPt(\A)}(\chit|-|^{-\frac{r-2}{4}}\,\otimest\,\theta'
|{\det}_{r-2}|^{\frac{1}{4}}\,\otimest\,\etat)_\omega\otimes\delta_Q^{s+\frac{1}{2}}.
\end{align*}

Recall we are trying to figure out the analytic behavior of the
Eisenstein series
\begin{equation}\label{E:identity_term1}
E_P(g,s;f^s)_{\Id}
=\sum_{m\in P_{r-2, 1}^{r-1}(F)\backslash\GL_{r-1}(F)}f_{N_{P'}}^s(\s(m)
g;1)
\end{equation}
as $g$ runs through all elements in $\MPt(\A)$ and $f^s$ runs through all the
sections. But for this purpose we may assume that
$g=((1,h),\xi)\in\MPt(\A)$ is such that
$(1,h)\in\GLt_1(\A)\times\GLt_{r-1}(\A)$, because if $g$
is not of this form, one can
always translate $f^s$ by an appropriate element in $\MPt(\A)$. Namely
we consider the section $f_{N_{P'}}^s|_{\GLt_{r-1}(\A)}$. Let
\[
F^s:=f_{N_{P'}}^s|_{\GLt_{r-1}(\A)}.
\]
Then one can see, by using Proposition \ref{P:tensor_restriction} on restriction of metaplectic
tensor product on smaller Levi, that 
\[
F^s\in \bigoplus_{\delta}\Ind_{\Pt^{r-1}_{r-2,1}(\A)}^{\GLt_{r-1}(\A)}(\theta'
|{\det}_{r-2}|^{\frac{1}{4}}\,\otimest\,\etat)_{\omega_\delta}\otimes\delta_Q^{s+\frac{1}{2}}
\delta_{P'}^{1/2}\delta_{P_{r-2,1}^{r-1}}^{-1/2},
\]
where $\delta$ runs through a subset of $\GL_1(F)$ and $\omega_\delta$
is an appropriate character on the center of $\GLt_{r-1}$. Hence after all, the analytic
behavior of (\ref{E:identity_term1}) is determined by the analytic
behavior of the Eisenstein series on $\GLt_{r-1}(\A)$ given by
\[
\sum_{m\in P_{r-2, 1}^{r-1}(F)\backslash GL_{r-1}(F)}F^s(\s(m)g;1)
\]
where $F^s\in\Ind_{\Pt^{r-1}_{r-2,1}(\A)}^{\GLt_{r-1}(\A)}(\theta'
|{\det}_{r-2}|^{\frac{1}{4}}\,\otimest\,\etat)_{\omega}\otimes\delta_Q^{s+\frac{1}{2}}
\delta_{P'}^{1/2}\delta_{P_{r-2,1}^{r-1}}^{-1/2}$ and $\omega$ is some
appropriately chosen character and 
$\delta_Q^{s+\frac{1}{2}}\delta_{P'}^{1/2}\delta_{P_{r-2,1}^{r-1}}^{-1/2}$
is restricted to $P_{r-2,1}^{r-1}$. 
As a character on $\GL_{r-2}\times\GL_1\subseteq P_{r-2,1}^{r-1}$, one
can compute
\begin{align*}
|\det(g_{r-2})|^{\frac{1}{4}}
\delta_Q^{s+\frac{1}{2}}\delta_{P'}^{1/2}\delta_{P_{r-2,1}^{r-1}}^{-1/2} (g_{r-2},b)&=
|\det(g_{r-2})|^{s-\frac{1}{4}}|b|^{-(s+\frac{1}{2})(r-1)+\frac{1}{2}(r-2)}\\
&=\delta_{P_{r-2,1}^{r-1}}(g_{r-1}, b)^{\frac{rs+\frac{1}{4}}{r-1}}
|\det(g_{r-1})b|^{\frac{-\frac{1}{4}r-s}{r-1}}.
\end{align*}
Thus the section $F^s$ belongs to
\begin{equation}\label{E:induced_space_r-1}
\Ind_{\Pt^{r-1}_{r-2,1}(\A)}^{\GLt_{r-1}(\A)}(\theta'\otimest\,\etat)_{\omega}\otimes
\delta_{P_{r-2,1}^{r-1}}^b|{\det}_{r-1}|^{a},
\end{equation}
where
\begin{equation}\label{E:exponent_b}
a=\frac{-\frac{1}{4}r-s}{r-1},\qquad b=\frac{rs+\frac{1}{4}}{r-1}.
\end{equation}
(Note that those two exponents
$a$ and $b$ are precisely the ones in the middle of p. 196 of
\cite{BG}, though in \cite{BG} induction is not normalized and hence
their exponents look different for ours.)

Therefore the analytic behavior of the Eisenstein series in
(\ref{E:identity_term1}) is determined by that of the Eisenstein
series associated to the induced representation
(\ref{E:induced_space_r-1}). But the twist by $|{\det}_{r-1}|^{a}$
does not have any affect on the analytic behavior, and hence we have
to consider the Eisenstein series on $\GLt_{r-1}(\A)$ associated with
the induced space
\[
\Ind_{\Pt^{r-1}_{r-2,1}(\A)}^{\GLt_{r-1}(\A)}(\theta'\otimest\,\etat)_{\omega}\otimes
\delta_{P_{r-2,1}^{r-1}}^{b},
\]
where $b$ is as in (\ref{E:exponent_b}).

Now by the induction hypothesis, this Eisenstein series on
$\GLt_{r-1}(\A)$ is holomorphic for $\Re(b)\geq 0$,
except when $\chi^2\eta^{-2}=1$ in which case it has a possible simple
pole at $b=\frac{1}{4}$. From (\ref{E:exponent_b}), $b=\frac{1}{4}$
amounts to
\begin{equation}\label{E:exponent_s}
s=\frac{1}{4}-\frac{1}{2r}.
\end{equation}

\begin{Rmk}
The above argument is essentially the detail of the argument outlined
in \cite[p.195-196]{BG}. As we pointed out at the beginning of the
section, however, in \cite{BG} it seems the induction argument is used for the
normalized Eisenstein series rather than the unnormalized one. But the
normalization of the Eisenstein series depends on the choice of the
set $S$ of ``bad places'', which include the bad places for the
section $f^s$. As one can see from the above computation, one has to apply the induction
hypothesis to the new section $F^s$ on the lower rank group $\GLt_{r-1}(\A)$. But there
is no guarantee that the same set $S$ works for $F^s$. This is why we
cannot use the induction argument for the normalized Eisenstein series.
\end{Rmk}

\quad

\noindent\underline{\bf Cancellation of the poles at $s=\frac{1}{4}-\frac{1}{2r}$}:

Finally, to complete the induction, we need to show  the (possible)
poles at $s=\frac{1}{4}-\frac{1}{2r}$ (if exist at all) of both the identity term
$E_P(g, s; f^s)_{\Id}$ and the non-identity term $E_P(g, s;
f^s)_{w_1}$ cancel out.

Set
\[
s_0:=\frac{1}{4}-\frac{1}{2r}.
\]
Let us note that the possible pole at
$s=s_0$ will happen only when $\chi^2\eta^{-2}=1$, namely
$\chi^2=\eta^2$. Moreover by the uniqueness of the metaplectic tensor
product (Proposition \ref{P:tensor_unique}), if $\chi^2\eta^{-2}=1$, then
\[
(\chit\otimest\cdots\otimest\chit\otimest\etat)_\omega
=(\chit\otimest\cdots\otimest\chit\otimest\chit)_\omega,
\]
\ie we may (and do) assume $\eta=\chi$, although most of the time we
use the notation
$(\chit\,\otimest\cdots\otimest\,\chit\,\otimest\,\etat)_\omega$.

As we have seen above, the two terms $E_P(g,s;f^s)_{\Id}$ and $E_P(g,s;f^s)_{w_1}$
both have a possible pole at $s=\frac{1}{4}-\frac{1}{2r}$.  But in what follows,
we will show the Eisenstein series $E(g,s;f^s)$ does not have a pole at
this point. Namely those two possible poles cancel each other or they
just do not exist to begin with. This is essentially shown in
\cite[p.201-203]{BG}. However many of the computations are omitted
there, and hence we will give a complete proof in detail here. The
basic idea is the following: First one computes the constant
term of our Eisenstein series along the Borel
subgroup $B$ instead of $P$. Then one can see that all the terms in the
constant term is holomorphic at $s=\frac{1}{4}-\frac{1}{2r}$ except
two terms. One can then see that the treatment of those two terms can be
reduced to the ``$\GLt_2$-case'', and invoke Proposition
\ref{P:action_at_s=0} to show the cancellation of the poles.

So let us compute the constant term $E_{B}(g, s; f^s) $ of the
Eisenstein series along the Borel. Analogously to (\ref{E:constant_term_even1})
\begin{align*}
E_{B}(g, s; f^s)=\sum_{\gamma\in Q(F)\backslash\GL_r(F)\slash
  N_B(F)}\int_{N_B(F)^{\gamma^{-1}}\backslash N_B(\A)}f^s(\s(\gamma n)g;1)\,dn,
\end{align*}
where $N_B(F)^{\gamma^{-1}}=N_B(F)\cap\gamma^{-1}Q(F)\gamma$. For
$i=0,\dots,r-1$, let
\[
w_i=\begin{pmatrix}I_{r-i-1}&&\\ &&1\\ &I_i&\end{pmatrix}.
\]
(Let us note that $w_1$ here differs from the one in
\eqref{E:w_1}, but this should not cause any confusion.)
By the Bruhat decomposition, one has
\[
Q(F)\backslash\GL_r(F)\slash N_B(F)=\bigcup_{i=0}^{r-1}Q\backslash
Qw_i^{-1}T_B=\bigcup_{i=0}^{r-1}T_B\cap w_iQ{w_i^{-1}}\backslash T_B,
\]
where the last equality is given by the map $\gamma=w_i^{-1}t\mapsto
t$ for $t\in T_B$. But $T_B\cap w_iQ{w_i^{-1}}\backslash T_B=1$ for
any $w_i$, and so each double coset in
$Q(F)\backslash\GL_r(F)\slash N_B(F)$ is represented by
$w_i^{-1}$. Hence
\begin{equation}\label{E:constant_term_even3}
E_{B}(g, s; f^s)=\sum_{i=0}^{r-1}\int_{N_B(F)^{w_i}\backslash N_B(\A)}f^s(\s(w_i^{-1} n)g;1)\,dn.
\end{equation}
Let us put
\begin{equation}\label{E:c_i}
c_i(g, s;f^s)=
\int_{N_B(F)^{w_i}\backslash N_B(\A)}f^s(\s(w_i^{-1} n)g;1)\,dn,
\end{equation}
so that
\begin{equation}\label{E:constant_term_even4}
E_{B}(g, s; f^s)=\sum_{i=0}^{r-1}c_i(g, s;f^s).
\end{equation}

Let us compute the term for $i=0$, so $w_0=1$ and
$N_B(F)^{w_0}=N_B(F)$. For $m\in\MQt$, define
\[
\hat{f}^s(g;m):=\int_{N_{B^{r-1,1}}(F)\backslash N_{B^{r-1,1}}(\A)}f^s(g;\s(n)m)\,dn.
\]
Namely $\hat{f}^s(g)$ is the constant term of the
automorphic form $f^s(g)\in\theta_{\chi,\eta}\otimes\delta_Q^{s+1/2}$
along the Borel subgroup $B^{r-1,1}$ of $M_Q=\GL_{r-1}\times\GL_1$. 
Then 
\begin{align*}
c_0(g,s;f^s)
&=\int_{N_B(F)\backslash N_B(\A)}f^s(\s(w_0^{-1} n)g;1)\,dn\\
&=\int_{N_{B^{r-1,1}}(F)\backslash
  N_{B^{r-1,1}}(\A)}f^s(\s(n)g;1)\,dn\\
&=\int_{N_{B^{r-1,1}}(F)\backslash
  N_{B^{r-1,1}}(\A)}f^s(g;\s(n))\,dn\\
&=\hat{f}^s(g;1)
\end{align*}
where we used that $N_B=N_QN_{B^{r-1,1}}$ and $N_Q(\A)$ acts trivially
on $f^s$.
But $\theta_{\chi,\eta}$ is
generated by the residues of Eisenstein series associated with the
induced representation
$\Ind_{B^{r-1,1}(\A)}^{\MQt(\A)}(\chit\,\otimest\cdots\otimest\,\chit\,\otimest\,\etat)^\nu_\omega$
at $\nu=\frac{1}{2}\rho_{B^{r-1,1}}$. Namely
\[
f^s(g;-)=\underset{\nu=\frac{1}{2}\rho_{B^{r-1,1}}}{\Res}E(-, \varphi^\nu)\otimes\delta_Q^{s+\frac{1}{2}}(-)
\]
for $\varphi^\nu\in\Ind_{B^{r-1,1}(\A)}^{\MQt(\A)}
(\chit\,\otimest\cdots\otimest\,\chit\,\otimest\,\etat)^\nu_\omega$. Now
a constant term of a residue is the residue of the constant
term. Hence we need to compute the constant term
$E_{B^{r-1,1}}(-,\varphi^\nu)$. By \cite[Proposition II.1.7 (ii), p.92]{MW}, we
can write
\[
E_{B^{r-1,1}}(-,\varphi^\nu)=\sum_{w\in W_{\GL_{r-1}}}M(w,\nu)\varphi_\nu(-)
\]
where $W_{\GL_{r-1}}$ is the Weyl group of $\GL_{r-1}$ embedded into
the left upper corner of $\GL_r$, and $M(w,\nu)$
is the intertwining operator 
\[
M(w,\nu):\Ind_{B^{r-1,1}(\A)}^{\MQt(\A)}(\chit\,\otimest\cdots\otimest\,\chit\,\otimest\,\etat)^\nu_\omega
\rightarrow \Ind_{B^{r-1,1}(\A)}^{\MQt(\A)}\,
^w(\chit\,\otimest\cdots\otimest\,\chit\,\otimest\,\etat)^{w\nu}_\omega.
\]
But we know from \cite[Theorem II.1.3]{KP} that this intertwining operator has a
residue at  $\nu=\frac{1}{2}\rho_{B^{r-1,1}}$ only for the longest
element $u=\begin{pmatrix} J_{r-1}&\\ &1\end{pmatrix}\in
W_{\GL_{r-1}}$, where $J_{r-1}$ is the $(r-1)\times (r-1)$ matrix with
$1$'s on the anti-diagonal entries and all the other entries
0. Moreover the residue is in the exceptional representation
$\theta_{\chi,\eta}$. Hence the function on $\Bt(\A)$ defined by
$b\mapsto\hat{f}^s(g;b)$ for $b\in\Bt(\A)$ is an automorphic form in
the space generated by
the constant terms of the exceptional representation
$\theta_{\chi,\eta}\otimes\delta_Q^{s+\frac{1}{2}}$, which is equal to 
\[
(\chit\,\otimest\cdots\otimest\,\chit\,\otimest\,\etat)_\omega\otimes\delta_{B^{r-1,1}}^{1/4}.
\]
Also for each $b, b'\in\Bt(\A)$, 
\begin{align*}
\hat{f}^s(b'g;b)&=\int_{N_{B^{r-1,1}}(F)\backslash
  N_{B^{r-1,1}}(\A)}f^s(b'g;\s(n)b)\,dn\\
&=\int_{N_{B^{r-1,1}}(F)\backslash
  N_{B^{r-1,1}}(\A)}f^s(g;\s(n)bb')\,dn\\
&=\hat{f}^s(g;bb').
\end{align*}
Hence we have 
\begin{equation}\label{E:Heisenberg1}
\hat{f}^s\in \ind_{\Bt(\A)}^{\GLt_r(\A)}
(\chit\,\otimest\cdots\otimest\,\chit\,\otimest\,\etat)_\omega
\otimes\delta_{B^{r-1,1}}^{1/4}\delta_Q^{s+\frac{1}{2}},
\end{equation}
where the induction is not normalized. 

Next consider $i>0$. Let
\[
N_i:=\{\begin{pmatrix}I_{r-i-1}&&\\ &1& X_i\\ &&I_i\end{pmatrix}:
X_i \text{ is a $1\times i$ matrix}\},
\]
\ie $N_i$ is the set of the ``$(r-i)^{\text{th}}$-rows'' of $N_B$. Also let
\[
U_{i}=\{\begin{pmatrix}I_{r-i-1}&0&Y_{r-i-1}\\0&I_i&0\\0&0&1\end{pmatrix}:
Y_{r-i-1} \text{ is a $(r-i-1)\times 1$ matrix}\},
\]
\ie $U_{i}$ is the set of the first $r-i-1$ entries in the last
column. Both $N_i$ and $U_{i}$ are subgroups of $N_B$. One can see
\[
w_i U_{i}N_{B^{r-1,1}} w_i^{-1} N_i=N_B,
\]
and
\[
N_B(F)^{w_i}=N_B(F)\cap w_iQ(F)w_i^{-1}=w_iU_{i}(F)N_{B^{r-1,1}}(F)w_i^{-1}.
\]
Therefore we have
\begin{align}\label{E:integral_c_i}
\notag c_i(g,s;f^s)
=&\int_{N_B(F)^{w_i}\backslash N_B(\A)}f^s(\s(w_i^{-1} n)g;1)\,dn\\
\notag=&\underset{w_iU_{i}(F)N_{B^{r-1,1}}(F)w_i^{-1}\backslash w_i
  U_{i}(\A)N_{B^{r-1,1}}(\A) w_i^{-1} N_i (\A)}{\int}
f^s(\s(w_i^{-1} n)g;1)\,dn\\
\notag=&\underset{N_i(\A)}{\int}\;\;
\underset{U_{i}(F)N_{B^{r-1,1}}(F)\backslash U_{i}(\A)N_{B^{r-1,1}}(\A)}{\int}
f^s(\s(n'w_i^{-1}n_i)g;1)\,dn'dn_i\\
=&\underset{N_i(\A)}{\int}\;\;
\underset{N_{B^{r-1,1}}(F)\backslash N_{B^{r-1,1}}(\A)}{\int}\;\;
\underset{U_i(F)\backslash U_i(\A)}{\int}
f^s(\s(u_i)\s(n_{r-1})\s(w_i^{-1}n_i)g;1)\,du_idn_{r-1}dn_i,
\end{align}
where for the last equality we used Lemma \ref{L:section_s} and that $w_i\in\GL_r(F)$.
Note that $U_i(\A)\subseteq N_Q(\A)$ acts trivially on $f^s$, and
$f^s(\s(u_i)\s(n_{r-1})\s(w_i^{-1}n_i)g;1)=f^s(\s(n_{r-1})\s(w_i^{-1}n_i)g;1)$. Hence the
inner most integral simply goes away. Furthermore,
$f^s(\s(n_{r-1})\s(w_i^{-1}n_i)g;1)=f^s(\s(w_i^{-1}n_i)g;\s(n_{r-1}))$. Therefore
the integral (\ref{E:integral_c_i}) is written as
\begin{align*}
\underset{N_i(\A)}{\int}\;\;
\underset{N_{B^{r-1,1}}(F)\backslash N_{B^{r-1,1}}(\A)}{\int}\;\;
f^s(\s(w_i^{-1}n_i)g;\s(n_{r-1}))dn_{r-1}dn_i
=\underset{N_i(\A)}{\int}\
\hat{f}^s(\s(w_i^{-1}n_i)g;1)dn_i.
\end{align*}
Therefore
\begin{align}\label{E:integral_c_i_2}
c_i(g,s;f^s)&=\int_{N_i(\A)}\hat{f}^s(\s(w_i^{-1})\s(n_i)g;1)\,dn_i.\\
\notag&=\underset{\A^i}{\int}\;\hat{f}^s\Big(\s(w_i^{-1})
\s(\begin{pmatrix}I_{r-i-1}&&\\ &1& X_i\\
  &&I_i\end{pmatrix})g;1\Big)\,dX_i.
\end{align}
This is precisely the formula stated (without a proof) at the
end of p.202 of \cite{BG}, though our $w_i^{-1}$ is their $w_i$.

We will show that $c_i(g, s;f^s)$ is holomorphic at
$s=s_0$ for all $i<r-2$, and for $i=r$ and $r-1$ it does have a pole
but they cancel out, and hence the constant term
(\ref{E:constant_term_even4}) has no pole at $s=s_0$, which implies
that the Eisenstein series $E(g,s,f^s)$ does not have a pole at $s=s_0$.

For this purpose, we need to reduce our situation to the
``$\GLt_2$-situation''. For this, just as is done in \cite{BG}, we need to
interpret the metaplectic tensor product
$(\chit\,\otimest\cdots\otimest\,\chit\,\otimest\,\etat)_\omega$ as follows.
\begin{Lem}\label{L:Heisenberg}
Let $\Tm(\A)$ be the subgroup of the maximum torus $T(\A)$ of the form
\[
\Tm(\A)=\{(t_1,\dots,t_r)\in T: t_i\in F^\times\A^{\times2}\}p(Z_{\GLt_r}(\A)),
\]
where $p$ is the canonical projection. Also let $\Tmt(\A)$ be the
metaplectic preimage of $\Tm(\A)$. Then $\Tmt(\A)$ is a maximal
abelian subgroup of $\Tt(\A)$. Moreover let $\omega^{\m}$ be any
character on $\Tmt(\A)$ extending the character on $Z_{\GLt_r}(\A)\Ttt(\A)$ given by
\[
\omega(\chit^{(2)}\otimest\cdots\otimest\chit^{(2)}),
\]
where $\omega$ is chosen so that it agrees with the metaplectic tensor
product $\chit^{(2)}\otimest\cdots\otimest\chit^{(2)} $ on the overlap. Then we have
\[
(\chit\otimest\cdots\otimest\chit)_\omega\cong\ind_{\Tmt(\A)}^{\Tt(\A)}\omega^\m,
\]
where $\ind$ is as in \cite[p. 54]{KP}.
\end{Lem}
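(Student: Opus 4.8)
The plan is to exhibit $\Tmt(\A)$ as a maximal abelian subgroup of the two-step nilpotent (``Heisenberg-type'') group $\Tt(\A)$, and then to identify the two representations by matching central characters, using the same uniqueness principle that underlies the construction of the metaplectic tensor product. First I would record the commutator in $\Tt(\A)$. Since $[\tilde t,\tilde t']$ depends only on the cohomology class of the defining cocycle, and since $\tau_r|_{T(\A)\times T(\A)}$ is cohomologous to the block-compatible cocycle $\tau_P$ for $P=B$ (this is the isomorphism ${}^c\MPt\cong\MPt$ recalled above, specialized to $P=B$), a short computation with the block-compatible cocycle gives, for $t=\diag(t_1,\dots,t_r)$, $t'=\diag(t_1',\dots,t_r')$ and any lifts,
\[
[\tilde t,\tilde t']=\prod_{i\ne j}(t_i,t_j')_\A .
\]
Writing $t_i=c\,\beta_i v_i^2$ with $\beta_i\in F^\times$, $v_i\in\A^\times$, and $c=1$ when $r$ is even (and likewise for $t'$), and using bilinearity of the Hilbert symbol together with the product formula $\prod_v(a,b)_v=1$ for $a,b\in F^\times$, one checks $[\tilde t,\tilde t']=\prod_i(\beta_i,\prod_{j\ne i}\beta_j')_\A=1$ for $t,t'\in\Tm(\A)$: the terms involving $v_i^2$ or $v_j'^2$ drop out, and those involving the common scalar $c$ (present only when $r$ is odd) drop out because they occur with the even multiplicities $r(r-1)$ and $r-1$. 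Hence $\Tmt(\A)$ is abelian.

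For maximality, suppose $\tilde t$ lifts $t\in T(\A)$ and centralizes $\Tmt(\A)$. For $b\in F^\times$ the diagonal matrix with $b$ in the $j$-th entry and $1$ elsewhere lies in $\Tm(\A)$, so pairing $\tilde t$ against any lift of it forces $(\prod_{i\ne j}t_i,\,b)_\A=1$ for every $b\in F^\times$ and every $j$. Here I would invoke the fact that $\{c\in\A^\times:(c,b)_\A=1\ \text{for all }b\in F^\times\}=F^\times\A^{\times2}$ — the non-degeneracy, against $F^\times$, of the adelic Hilbert symbol, which follows from the product formula and the non-degeneracy of the local symbols and is the standard input used in the global metaplectic constructions (see \cite{KP, FK, Takeda2}). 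It yields $\prod_{i\ne j}t_i\in F^\times\A^{\times2}$ for all $j$, hence $t_i/t_j\in F^\times\A^{\times2}$ for all $i,j$. If $r=2q+1$ then $t=t_1I_r\cdot\diag(1,t_2/t_1,\dots,t_r/t_1)$ exhibits $t\in p(Z_{\GLt_r}(\A))\cdot\{t:t_i\in F^\times\A^{\times2}\}=\Tm(\A)$; if $r=2q$ then, since $\prod_{i\ne1}t_i$ differs from $t_1^{\,r-1}$ by an element of $F^\times\A^{\times2}$ and $r-1$ is odd, the relation $\prod_{i\ne1}t_i\in F^\times\A^{\times2}$ forces $t_1\in F^\times\A^{\times2}$, hence all $t_i\in F^\times\A^{\times2}$ and again $t\in\Tm(\A)$. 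Thus $\Tmt(\A)$ is maximal abelian; the same commutator computation, taken now over all $b\in\A^\times$, shows $Z(\Tt(\A))=Z_{\GLt_r}(\A)\Ttt(\A)$.

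It then remains to identify the representations. By the Stone--von Neumann type theory for such groups — a genuine irreducible representation of $\Tt(\A)$ is determined up to isomorphism by its (genuine) central character, and for any maximal abelian $\Tmt(\A)$ and any character $\omega^\m$ of it the representation $\ind_{\Tmt(\A)}^{\Tt(\A)}\omega^\m$ is irreducible with central character $\omega^\m|_{Z(\Tt(\A))}$ (this is the same principle underlying the construction of the metaplectic tensor product; see \cite{KP} and \cite{Takeda2}) — it suffices to match central characters. By definition of the metaplectic tensor product, $(\chit\otimest\cdots\otimest\chit)_\omega$ is the genuine irreducible representation of $\Tt(\A)$ obtained by inducing the character $\omega(\chit^{(2)}\otimest\cdots\otimest\chit^{(2)})$ from $Z_{\GLt_r}(\A)\Ttt(\A)=Z(\Tt(\A))$, so its central character is exactly that character; by hypothesis $\omega^\m$ restricts to the same character of $Z(\Tt(\A))$. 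Hence the two representations have equal central characters and are therefore isomorphic, which is the assertion.

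I expect the maximality step to be the main obstacle: it is where the case distinction on the parity of $r$ and the description of $Z_{\GLt_r}(\A)$ really enter, and it rests on the non-degeneracy statement $\{c:(c,b)_\A=1\ \forall b\in F^\times\}=F^\times\A^{\times2}$, which I would cite rather than reprove. The commutator identity and the Stone--von Neumann argument are routine given the results already recorded in \cite{KP} and \cite{Takeda2}.
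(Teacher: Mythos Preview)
Your argument is correct and follows exactly the approach the paper has in mind: the paper's proof is a two-line citation to \cite[II.1.1 and p.~52--56]{KP} for the maximal-abelian property of $\Tmt(\A)$ and the Heisenberg/Stone--von Neumann uniqueness, while you unpack those citations explicitly --- computing the commutator from the block-compatible cocycle, verifying maximality via the non-degeneracy statement $\{c\in\A^\times:(c,b)_\A=1\ \forall\,b\in F^\times\}=F^\times\A^{\times2}$, and matching central characters. One minor remark: that non-degeneracy statement genuinely needs global class field theory (every quadratic Hecke character is $(d,\cdot)_\A$ for some $d\in F^\times$), not merely ``the product formula and local non-degeneracy'' as you phrase it, so your citation to \cite{KP} here is doing real work.
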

\begin{proof}
The group $\Tt(\A)$ is a Heisenberg group and the group $\Tmt(\A)$ is
a maximal abelian subgroup by \cite[II.1.1]{KP}. Hence the lemma
follows from a general theory on the Heisenberg group as described in
\cite[p.52-56]{KP}.
\end{proof}

Let us note that in \cite{BG}, our $\Tm$ is denoted by $T^i$, and
our $\omega^\m$ by $\omega^i$, but we avoid to use $i$ in order not to
confuse it with the index $i$ we have been using.

With this lemma, one may assume
\begin{equation}\label{E:Heisenberg2}
\hat{f}^s\in \ind_{\Tmt(\A){N_B(\A)}^\ast}^{\GLt_r(\A)}
\omega^\m \otimes\delta_{B^{r-1,1}}^{1/4}\delta_Q^{s+\frac{1}{2}},
\end{equation}
where ${N_B(\A)}^\ast$ acts trivially as usual. 

Another important property of the group $T^\m(\A)$ is
\begin{Lem}\label{L:Tm}
The partial section $\s:\GL_r(\A)\rightarrow\GLt_r(\A)$ is not only
defined but a homomorphism on $T^\m(\A)N_B(\A)$. Also for $t\in
T^\m(\A)$ and $g\in\GL_r(F)$, both $\s(tg)$ and $\s(gt)$ are defined
and $\s(tg)=\s(t)\s(g)$ and $\s(gt)=\s(g)\s(t)$.
\end{Lem}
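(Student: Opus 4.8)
The plan is to reduce the whole statement to the vanishing of certain finite products of Hilbert symbols and then invoke Hilbert reciprocity (the product formula). The key observation is that, by the very definition of $\tau_r$, the cocycle $\tau_{r,v}$ differs from $\sigma_{r,v}$ by the coboundary of $s_{r,v}$ at each non-archimedean place, and $\tau_{r,v}=\sigma_{r,v}$ at the archimedean ones (see \eqref{E:tau_sigma}), while $\s(g)=(g,s_r(g)^{-1})$. Hence, whenever $\s$ is defined on a set containing $h$, $h'$ and $hh'$, the identity $\s(hh')=\s(h)\s(h')$ holds if and only if $\prod_v\sigma_{r,v}(h_v,h'_v)=1$; in particular $\s$ is a homomorphism on a subgroup $H$ on which it is defined precisely when $\prod_v\sigma_{r,v}$ is trivial on $H\times H$. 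So the first, easy, step is to note that $\s$ is defined on all the sets appearing in the statement: $\Tm(\A)N_B(\A)\subseteq B(\A)$, on which $\s$ is already known to be defined, and for $t\in\Tm(\A)$ and $g\in\GL_r(F)$ the elements $tg$ and $gt$ have their $v$-component in $K_v$ for almost all $v$, where $s_{r,v}\equiv 1$ by \eqref{E:kappa_and_s}.

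For the homomorphism property on $\Tm(\A)N_B(\A)$, I would write a general element as $tn$ with $t\in\Tm(\A)$, $n\in N_B(\A)$, and use $(tn)(t'n')=(tt')\bigl((t')^{-1}nt'\bigr)n'$ together with the fact (\cite[Theorem 7 (f), \S3]{BLS}) that $\prod_v\sigma_{r,v}$ already vanishes on $N_B(\A)\times N_B(\A)$, so that $\s$ splits $N_B(\A)$. The cocycle identity then reduces the claim to: $\prod_v\sigma_{r,v}$ vanishes on $\Tm(\A)\times\Tm(\A)$, on $\Tm(\A)\times N_B(\A)$, and on $N_B(\A)\times\Tm(\A)$ — the last two being equivalent to the assertion that $\s$-conjugation by elements of $\Tm(\A)$ agrees with abstract conjugation on $\s(N_B(\A))$. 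Using block-compatibility \eqref{E:compatibility} for the Borel (Levi $=T$) and the explicit BLS description of $\sigma_r$ on $B$, each of these quantities becomes a finite product of local Hilbert symbols of the shape $(x_{i,v},y_{j,v})_{F_v}$ (torus against torus) or $(x_{i,v},c)_{F_v}$ with $c$ an entry or minor coming from the unipotent part (torus against $N_B$).

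Now comes the main computation, which is where the specific shape of $\Tm$ is used. Every entry of an element of the subgroup $\{(t_1,\dots,t_r):t_i\in F^\times\A^{\times 2}\}$ is of the form $\alpha\beta^2$ with $\alpha\in F^\times$ and $\beta\in\A^\times$, and a Hilbert symbol annihilates any adelic square; so each symbol $(x_{i,v},y_{j,v})_{F_v}$ collapses to $(\alpha_i,\gamma_j)_{F_v}$ with $\alpha_i,\gamma_j\in F^\times$, whence $\prod_v(\alpha_i,\gamma_j)_{F_v}=1$ by reciprocity. The torus–unipotent symbols are dealt with the same way: the $\Tm$-factor contributes only its $F^\times$-part, the unipotent data is rational (either because we are in the second assertion with $g\in\GL_r(F)$, or after first using Lemma~\ref{L:section_s} to move the remaining rational unipotents across), and the product formula again gives $1$. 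The second assertion is handled identically: for $t\in\Tm(\A)$ and $g\in\GL_r(F)$, write $g$ in Bruhat form over $F$ and peel it off with the cocycle identity; $\prod_v\sigma_{r,v}(t_v,g)$ reduces, after discarding adelic squares and using block-compatibility, to products of Hilbert symbols both of whose arguments lie in $F^\times$, hence vanish, and therefore $\s(tg)=\s(t)\s(g)$ and likewise $\s(gt)=\s(g)\s(t)$.

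The one point that genuinely needs care — and which I expect to be the main obstacle — is the central factor $p(Z_{\GLt_r}(\A))\subseteq\Tm(\A)$. When $r=2q$ this factor is $\{aI_r:a\in\A^{\times 2}\}$ and is already contained in the subgroup treated above, so nothing new occurs; but when $r=2q+1$ it is all of $\{aI_r:a\in\A^\times\}$, where $a$ itself is not an adelic square and the reduction above does not apply to it directly. The structural input to exploit is that these elements are genuinely central in $\GLt_r(\A)$, so that all the conjugation (torus–unipotent) statements for them are automatic; what remains is to check that $\s$, together with the canonical normalization of $s_r$ relative to the splitting of $K$ from \cite{Takeda2} and the triviality $s_{r,v}|_{T\cap K_v}=1$ recorded in \eqref{E:kappa_and_s}, is compatible with multiplication on this central factor and with its products against the rational part of $\Tm(\A)$. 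This last verification — a bookkeeping of the remaining Hilbert symbols at the finitely many ramified places, together with reciprocity — is the technical heart of the proof.
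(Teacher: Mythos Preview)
Your overall strategy---reduce everything to showing $\prod_v\sigma_{r,v}=1$ on the relevant pairs and then invoke Hilbert reciprocity on $F^\times\A^{\times 2}$---is exactly the paper's. But the paper carries it out much more efficiently, and in doing so avoids a point where your argument, as written, does not go through.

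For the first assertion the paper does not break into torus--torus, torus--unipotent, and unipotent--torus cases. It uses directly the BLS property (\cite[Theorem~7]{BLS}) that $\sigma_r(n g,g')=\sigma_r(g,g')$ and $\sigma_r(g,g'n)=\sigma_r(g,g')$ for $n\in N_B$, so that $\sigma_r(tn,t'n')=\sigma_r\bigl((tnt^{-1})t,\,t'n'\bigr)=\sigma_r(t,t')$ in one line. All your ``torus against $N_B$'' symbols are therefore trivially $1$; there is no ``unipotent data'' to analyze. This matters, because your justification that ``the unipotent data is rational'' is not correct in part~1: the elements $n,n'$ lie in $N_B(\A)$, not $N_B(F)$, and Lemma~\ref{L:section_s} does not convert adelic unipotents into rational ones. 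Once one uses the BLS invariance, only $\sigma_r(t,t')$ remains, and block-compatibility plus the triviality of the Hilbert symbol on $F^\times\A^{\times 2}\times F^\times\A^{\times 2}$ finishes exactly as you say.

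Your worry about the central factor $p(Z_{\GLt_r}(\A))$ is a red herring here: the lemma sits in, and is only used in, the subsection for $r=2q$, where $p(Z_{\GLt_r}(\A))=\{aI_r:a\in\A^{\times 2}\}$ is already contained in the torus with entries in $F^\times\A^{\times 2}$. So the case you flag as ``the technical heart'' does not arise. For the second assertion the paper again simply records that $\sigma_r(t,g)=1$ (the entries of $t$ contribute only their $F^\times$-parts to any Hilbert symbol, and everything coming from $g\in\GL_r(F)$ is rational), which is precisely your reduction; your Bruhat-decomposition outline is a correct way to unpack that assertion.
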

\begin{proof}
It is known that the partial section $\s$ is defined on the Borel
subgroup $B(\A)$ and the block-compatible cocycle is
globally defined on $B(\A)\times B(\A)$. Now if $t, t'\in T^\m(\A)$
and $n, n'\in N_B(\A)$, one can compute
\[
\sigma_r(tn, t'n')=\sigma_r(tnt^{-1}t, t'n')=\sigma_r(t,t')=1,
\]
where the last equality follows because the Hilbert symbol is trivial
on $F^\times\A^{\times 2}\times F^\times\A^{\times 2}$.

To show the second part of the lemma note that at every place $v$,
\[
s_r(t_vg_v)=\frac{s_r(t_v)s_r(g_v)}{\sigma_r(t_v,g_v)}\tau_r(t_v,g_v)
\]
and for almost all $v$ one can see that the right hand side is 1, and
hence the product $\prod_v s_r(t_vg_v)$ is defined, \ie $\s(tg)$ is
defined. Also this implies that globally we have
\[
s_r(tg)=\frac{s_r(t)s_r(g)}{\sigma_r(t,g)}\tau_r(t,g)=s_r(t)s_r(g)\tau_r(t,g)
\]
because $\sigma_r(t,g)=1$, which implies $\s(tg)=\s(t)\s(g)$. The same
argument works for $\s(gt)$.
\end{proof}

Now for each $i$, let us define the inclusion
\[
\iota_i:\GL_2\rightarrow\GL_r,\quad
g_2\mapsto
\begin{pmatrix}I_{r-i-1}&&&\\ &g_2&\\ &&I_{i-1}\end{pmatrix},
\]
so the first entry of $g_2\in\GL_2$ is in the $(r-i,
r-i)$-entry. This lifts to
\[
\iotat_i:\GLt_2(\A)\rightarrow\GLt_r(\A).
\]

With this notation, define
\begin{align*}
F_i^s(g_2;g)&:=c_i(\iotat_i(g_2)g,s;f^s)=\underset{\A^i}{\int}\;\hat{f}^s\Big(\s(w_i^{-1})
\s(\begin{pmatrix}I_{r-i-1}&&\\ &1& X_i\\
  &&I_i\end{pmatrix})\iotat_i(g_2)g;1\Big)\,dX_i\\
G_i^s(g_2;g)&:=c_{i-1}(\iotat_i(g_2)g,s;f^s)=\underset{\A^{i-1}}{\int}\;\hat{f}^s\Big(\s(w_{i-1}^{-1})
\s(\begin{pmatrix}I_{r-i}&&\\ &1& X_{i-1}\\
  &&I_{i-1}\end{pmatrix})\iotat_{i}(g_2)g;1\Big)\,dX_{i-1}.
\end{align*}
It should be noted that
\begin{equation}\label{E:F=G}
G_i^s(1;g)=F_{i-1}^s(1;g).
\end{equation}
Also we have
\begin{Lem}\label{L:F_and_G}
Assuming the integrals of $F_i^s(g_2;g)$ and $G_i^s(g_2;g)$ both converge, we
have
\[
F_i^s(g_2;g)=\int_{\A}G_i^s(\s_2\begin{pmatrix} &1\\
  1&\end{pmatrix} \s_2\begin{pmatrix} 1&x\\
  &1\end{pmatrix}g_2;g)\;dx,
\]
where $\s_2:\GL_2(\A)\rightarrow\GLt_2(\A)$ is the partial section for $\GLt_2(\A)$.
\end{Lem}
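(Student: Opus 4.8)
The plan is to unwind the definitions of $F_i^s$ and $G_i^s$ from \eqref{E:integral_c_i_2}, split the $X_i$-integral into an integral over its first coordinate $x\in\A$ followed by one over the remaining block $X_{i-1}\in\A^{i-1}$, and then match the two integrands on the nose as functions on $\GLt_r(\A)$. Write $\tau_0=\left(\begin{smallmatrix}&1\\ 1&\end{smallmatrix}\right)$ and $u_0(x)=\left(\begin{smallmatrix}1&x\\ &1\end{smallmatrix}\right)$ in $\GL_2$, and $X_i=(x,X_{i-1})$. Since by hypothesis the integrals converge absolutely, Fubini reduces the lemma to the pointwise identity
\[
\s(w_i^{-1})\,\s\left(\begin{smallmatrix}I_{r-i-1}&&\\ &1&X_i\\ &&I_i\end{smallmatrix}\right)\iotat_i(g_2)=\s(w_{i-1}^{-1})\,\s\left(\begin{smallmatrix}I_{r-i}&&\\ &1&X_{i-1}\\ &&I_{i-1}\end{smallmatrix}\right)\iotat_i\big(\s_2(\tau_0)\s_2(u_0(x))g_2\big)
\]
in $\GLt_r(\A)$, valid for all $x$ and $X_{i-1}$; applying $\hat f^s(-;1)$ and integrating in $X_{i-1}$ and $x$ then yields the statement (the inner integral over $X_{i-1}$ of the right-hand side being exactly $G_i^s(\s_2(\tau_0)\s_2(u_0(x))g_2;g)$).

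The group-theoretic heart is the matrix identity $w_i=\iota_i(\tau_0)\,w_{i-1}$, equivalently $w_i^{-1}=w_{i-1}^{-1}\iota_i(\tau_0)$, which is immediate on permutation matrices once one uses that $\iota_i$ places the $\GL_2$-block in rows and columns $r-i,\,r-i+1$. Together with the factorization $\left(\begin{smallmatrix}I_{r-i-1}&&\\ &1&X_i\\ &&I_i\end{smallmatrix}\right)=\iota_i(u_0(x))\,v(X_{i-1})$, where $v(X_{i-1})$ carries the entries $X_{i-1}$ in row $r-i$ and columns $r-i+2,\dots,r$, and with the conjugation relation $\iota_i(\tau_0)\,v(X_{i-1})\,\iota_i(\tau_0)=\left(\begin{smallmatrix}I_{r-i}&&\\ &1&X_{i-1}\\ &&I_{i-1}\end{smallmatrix}\right)$ (conjugation by $\iota_i(\tau_0)$ simply swaps the indices $r-i$ and $r-i+1$ and fixes all indices $\geq r-i+2$), one obtains the underlying identity already in $\GL_r(\A)$. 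To promote it to $\GLt_r(\A)$ I would use, repeatedly: that $\s$ is a homomorphism on $N_B(\A)$ and on $\GL_r(F)$; that $\s(\gamma n)=\s(\gamma)\s(n)$ and $\s(n\gamma)=\s(n)\s(\gamma)$ for $\gamma\in\GL_r(F)$ and $n\in N_B(\A)$ (Lemma \ref{L:section_s}); that $\iotat_i$ is a group homomorphism; and that $\iotat_i$ is compatible with the partial sections, $\iotat_i\circ\s_2=\s\circ\iota_i$ on the domain of $\s_2$, which follows from the block-compatibility \eqref{E:compatibility} of the cocycle restricted to the Levi block $\iota_i(\GL_2)\subseteq\GL_r$. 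Threading these relations through $c_i$ — replacing $\s(w_i^{-1})$ by $\s(w_{i-1}^{-1})\s(\iota_i(\tau_0))$, pushing the image of the $\GL_2$-Weyl element to the right of $v(X_{i-1})$ by way of the conjugation relation so as to create $\s\big(\left(\begin{smallmatrix}I_{r-i}&&\\ &1&X_{i-1}\\ &&I_{i-1}\end{smallmatrix}\right)\big)$, and collapsing the surviving factor to $\s(\iota_i(\tau_0u_0(x)))=\iotat_i(\s_2(\tau_0)\s_2(u_0(x)))$ (with $\s_2(\tau_0)\s_2(u_0(x))=\s_2(\tau_0u_0(x))$ by Lemma \ref{L:section_s} for $\GL_2$) — produces exactly the displayed identity.

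The step I expect to be the main obstacle is precisely this metaplectic bookkeeping: one must check that no $\{\pm1\}$-cocycle factor survives, i.e. that the identity holds in $\GLt_r(\A)$ and not merely after projecting to $\GL_r(\A)$. Every product that occurs, however, is of the form (element of $\GL_r(F)$)$\cdot$(element of $N_B(\A)$) or a product of two elements of $N_B(\A)$, so $\s$ is multiplicative on it by Lemma \ref{L:section_s}, and the only remaining point is the section-compatibility $\iotat_i\circ\s_2=\s\circ\iota_i$; since $\tau_0$, $u_0(x)$ and the $w_i$ are all handled by $\s$ on $\GL_r(F)$ and on $N_B(\A)$, the archimedean places create no difficulty, and working place by place reduces any residual sign-chasing to the local cocycle relations recorded in Section \ref{S:metaplectic_cover}. (For $i=1$ the identity degenerates to $\s(\iota_1(\tau_0))\s(\iota_1(u_0(x)))\iotat_1(g_2)=\iotat_1(\s_2(\tau_0)\s_2(u_0(x))g_2)$, which is a direct consequence of the compatibility of $\iotat_1$ with the sections.)
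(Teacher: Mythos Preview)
Your proposal is correct and follows essentially the same approach as the paper: the paper's proof rests on exactly the two ingredients you isolate, namely the compatibility $\iotat_i\circ\s_2=\s\circ\iota_i$ and the matrix identity $w_i=\iota_i(\tau_0)\,w_{i-1}$ (this is \eqref{E:w_i}), after which it simply says ``one can check the lemma by a direct computation using Lemma~\ref{L:section_s}.'' You have supplied precisely that direct computation, including the commutation of $\iota_i(u_0(x))$ with $v(X_{i-1})$ and the conjugation $\iota_i(\tau_0)v(X_{i-1})\iota_i(\tau_0)=n'(X_{i-1})$, so your argument is a faithful expansion of the paper's.
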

\begin{proof}
First note that $\iotat_i\circ\s_2=\s\circ\iota_i$, and hence 
\[
\iotat_i(\s_2\begin{pmatrix} &1\\
  1&\end{pmatrix} \s_2\begin{pmatrix} 1&x\\
  &1\end{pmatrix}g_2)=\s(\iota_i\begin{pmatrix}&1\\
  1&\end{pmatrix})
\s(\iota_i\begin{pmatrix} 1&x\\ &1\end{pmatrix})\iotat_i(g_2).
\]
Second note that
\begin{align}\label{E:w_i}
w_i=\begin{pmatrix}I_{r-i-1}&&&\\ &&&1\\ &1&&\\ &&I_{i-1}&\end{pmatrix}
&=\begin{pmatrix}I_{r-i-1}&&&\\ &&1&\\ &1&&\\ &&&I_{i-1}\end{pmatrix}
\begin{pmatrix}I_{r-i-1}&&&\\ &1&&\\ &&&1\\ &&I_{i-1}&\end{pmatrix}\\
\notag&=\iota_i\begin{pmatrix}&1\\
  1&\end{pmatrix} w_{i-1}.
\end{align}

Then one can check the lemma by a direct computation using Lemma \ref{L:section_s}.
\end{proof}

Now let $B^2=T_{B^2}N_{B^2}$ be the Borel subgroup of $\GL_2$ and $T_2^\m$ be
the analogous subgroup of $T_{B^2}$ as defined above with $r=2$. Let
$\omega_2^\m$ be the above $\omega^\m$ with $r=2$, so
$\ind_{\Tmt_2(\A)}^{\Tt_{B^2}(\A)}\omega_2^\m=\chit\otimest\chit$. (Note
that for $\GLt_2$ there is no choice for the central character
$\omega$ for the metaplectic tensor product because the center
$Z_{\GLt_2}$ is already contained in $\Ttt_{B^2}$, and hence we write
$\chit\otimest\chit$ instead of $(\chit\otimest\chit)_\omega$.)

Let us mention
\begin{Lem}
With the above notation, we have
\[
G_i^{s}(-;g)\in
\ind_{\Tmt_2(\A){N_{B^2}(\A)}^\ast}^{\GLt_2(\A)}\omega_2^\m\otimes
\delta_{B^2}^{t+\frac{1}{2}}
|\det|^u,
\]
where
\[
t=\frac{1}{2}rs+\frac{1}{8}-\frac{1}{4}i\quad\text{and}\quad
u=s-\frac{1}{2}rs-\frac{3}{8}r+\frac{3}{4}i,
\]
and hence in particular at $s=s_0=\frac{1}{4}-\frac{1}{2r}$ we have
\[
G_i^{s_0}(-;g)\in
\ind_{\Tmt_2(\A){N_{B^2}(\A)}^\ast}^{\GLt_2(\A)}\omega_2^\m\otimes\delta_{B^2}^{\frac{1}{4}(r-i+1)}
|\det|^{-\frac{1}{2}r+\frac{3}{4}i+\frac{1}{2}-\frac{1}{2r}},
\]
provided the integral is convergent.
\end{Lem}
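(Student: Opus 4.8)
The plan is to check directly that $g_2\mapsto G_i^s(g_2;g)$ transforms on the left under $\Tmt_2(\A)N_{B^2}(\A)^\ast$ by a quasicharacter of the asserted shape, and then to read off $t$ and $u$ from that transformation; the second displayed membership then follows by specializing to $s=s_0=\tfrac14-\tfrac1{2r}$. The tools are: the realization (\ref{E:Heisenberg2}) of $\hat f^s$ as an element of $\ind_{\Tmt(\A)N_B(\A)^\ast}^{\GLt_r(\A)}\omega^\m\otimes\delta_{B^{r-1,1}}^{1/4}\delta_Q^{s+\frac12}$; the compatibility $\iotat_i\circ\s_2=\s\circ\iota_i$; Lemma \ref{L:Tm}, which says that $\s$ restricts to a homomorphism on $T^\m(\A)N_B(\A)$ and behaves well against $\GL_r(F)$-elements such as $w_{i-1}^{-1}$; and the factorization (\ref{E:w_i}) of $w_i$.

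Concretely, fix $t_2=\diag(a,b)\in T_2^\m(\A)$ and $n_2\in N_{B^2}(\A)$. Using $\s_2(t_2n_2)=\s_2(t_2)\s_2(n_2)$ (Lemma \ref{L:Tm} for $\GLt_2$) and $\iotat_i\circ\s_2=\s\circ\iota_i$ one has $\iotat_i(\s_2(t_2n_2)g_2)=\s(\iota_i(t_2))\,\s(\iota_i(n_2))\,\iotat_i(g_2)$, with $\iota_i(t_2)\in T^\m(\A)$ and $\iota_i(n_2)\in N_B(\A)$. Inside the integral defining $G_i^s$ one transports $\s(\iota_i(t_2))$ all the way to the left past $\s(w_{i-1}^{-1})$ and the integration unipotent: conjugating the integration unipotent by $\iota_i(t_2)$ merely rescales the variable $X\in\A^{i-1}$, which after a change of variables produces a Jacobian $|b|^{i-1}$, while conjugating $w_{i-1}^{-1}$ by $\iota_i(t_2)$ produces the diagonal element $t'_2:=w_{i-1}^{-1}\iota_i(t_2)w_{i-1}\in T^\m(\A)$ whose only nontrivial diagonal entries are $a$ in the $(r-i)$-th place and $b$ in the $r$-th place; every such move is legitimate by Lemmas \ref{L:Tm} and \ref{L:section_s}, since the factors involved all lie in $N_B(\A)$, $T^\m(\A)$, or $\GL_r(F)$. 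Because $\iota_i(n_2)$ is a simple root subgroup, its conjugate by $w_{i-1}^{-1}$ is again a positive root subgroup, and after a further harmless change of variables in $X$ the factor $\s(\iota_i(n_2))$ is absorbed into $N_B(\A)^\ast$, on which $\hat f^s$ is left-invariant. Applying (\ref{E:Heisenberg2}) then yields
\[
G_i^s(\s_2(t_2n_2)g_2;g)=|b|^{i-1}\,\bigl(\omega^\m\otimes\delta_{B^{r-1,1}}^{1/4}\delta_Q^{s+\frac12}\bigr)(t'_2)\,G_i^s(g_2;g).
\]

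It remains to evaluate the scalar and match it with $\omega_2^\m(\s_2(t_2))\,\delta_{B^2}^{t+\frac12}(t_2)\,|\det(t_2)|^u$. Since the $r$-th diagonal entry of $t'_2$ lies outside the $\GL_{r-1}$-factor of $M_Q$, only the $(r-i)$-th entry $a$ feeds into $\delta_{B^{r-1,1}}(t'_2)$, whereas $\delta_Q(t'_2)$ sees $a$ (through the determinant of the $\GL_{r-1}$-block) and $b$; computing these two modular characters explicitly and folding in the Jacobian $|b|^{i-1}$ expresses the scalar as a unitary character of $(a,b)$ times explicit powers of $|a|$ and of $|b|$, and comparing those powers with the ones coming from $\delta_{B^2}^{t+\frac12}|\det|^u$ determines $t$ and $u$. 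The identification of the unitary part with $\omega_2^\m$ rests on Lemma \ref{L:Heisenberg} applied with $r=2$ together with the uniqueness of the metaplectic tensor product (Proposition \ref{P:tensor_unique}). Finally, right-smoothness and $\GLt_2(\A)$-finiteness of $G_i^s(-;g)$ are inherited from $\hat f^s$, so $G_i^s(-;g)$ genuinely lies in the asserted induced space; the identities above are literal wherever the defining integral converges and hold by meromorphic continuation otherwise, which is exactly the stated proviso, and substituting $s=s_0$ gives the last displayed formula.

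I expect the main difficulty to be precisely this bookkeeping: keeping the section $\s$ inside the subsets where Lemmas \ref{L:Tm} and \ref{L:section_s} license moving it through $w_{i-1}^{-1}$, the integration unipotent, $\iota_i(t_2)$, and $\iota_i(n_2)$ (recall $\s$ is not a global homomorphism on $\GL_r(\A)$), and then carrying out the explicit $\delta_{B^{r-1,1}}$- and $\delta_Q$-computation once the relevant diagonal entry has been shuffled by $w_{i-1}^{-1}$ into the last coordinate, which is where the precise exponents come from.
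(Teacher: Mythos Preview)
Your proposal is correct and follows essentially the same route as the paper's proof: both insert a torus element $\s_2(\diag(a,b))$ via $\iotat_i$, push it leftwards past the $X_{i-1}$-integration (picking up the Jacobian $|b|^{i-1}$) and past $\s(w_{i-1}^{-1})$ using Lemma~\ref{L:Tm} (so that it becomes the diagonal element with $a$ in position $r-i$ and $b$ in position $r$), apply (\ref{E:Heisenberg2}) to read off the scalar, and then solve for $t,u$. The only cosmetic differences are that you also spell out the $N_{B^2}$-invariance step (which the paper leaves implicit, since $\hat f^s$ is already left $N_B(\A)^\ast$-invariant) and that you do not write out the final exponent bookkeeping explicitly; neither affects the argument. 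One small wording slip: the ``further harmless change of variables in $X$'' you mention for the unipotent step is not actually needed---after commuting $\iota_i(n_2)$ past $n_{X_{i-1}}$, the extra commutator terms live in row $r-i$, and conjugation by $w_{i-1}^{-1}$ keeps them in $N_B$, where they are absorbed directly.
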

\begin{proof}
Let
\[
g_2=\s(\iota_i(\begin{pmatrix}t_1&\\ &t_2\end{pmatrix})),
\]
where $t_i\in F^\times\A^{\times 2}$, so $\begin{pmatrix}t_1&\\
  &t_2\end{pmatrix}\in T_2^\m(\A)$. Then one can compute
\allowdisplaybreaks\begin{align*}
G_i^s(g_2;g)&=\underset{\A^{i-1}}{\int}\;\hat{f}^s\Big(\s(w_{i-1}^{-1})
\s(\begin{pmatrix}I_{r-i-1}&&&\\ &1&&\\ &&1& X_{i-1}\\
 & &&I_{i-1}\end{pmatrix})\s(\begin{pmatrix}I_{r-i-1}&&&\\ &t_1&&
 \\ &&t_2 &\\
  &&&I_{i-1}\end{pmatrix})g;1\Big)\,dX_{i-1}\\
&=\underset{\A^{i-1}}{\int}\;\hat{f}^s\Big(\s(w_{i-1}^{-1})
\s(\begin{pmatrix}I_{r-i-1}&&&\\ &t_1&&\\ &&t_2&\\
 & &&I_{i-1}\end{pmatrix})
\s(\begin{pmatrix}I_{r-i-1}&&&\\ &1&&\\ &&1& t_2^{-1}X_{i-1}\\
  &&&I_{i-1}\end{pmatrix})g;1\Big)\,dX_{i-1}\\
&=\underset{\A^{i-1}}{\int}\;\hat{f}^s\Big(\s(w_{i-1}^{-1})
\s(\begin{pmatrix}I_{r-i-1}&&&\\ &t_1&&\\ &&t_2&\\
 & &&I_{i-1}\end{pmatrix})
\s(w_{i-1})\s(w_{i-1}^{-1})\\
&\qquad\qquad\qquad\qquad\qquad\qquad
\s(\begin{pmatrix}I_{r-i-1}&&&\\ &1&&\\ &&1& t_2^{-1}X_{i-1}\\
  &&&I_{i-1}\end{pmatrix})g;1\Big)\,dX_{i-1},
\end{align*}
where for the second equality we used Lemma \ref{L:Tm} and for the
third equality we used $\s(w_i)^{-1}=\s(w_i^{-1})$ since
$\s$ is a group homomorphism on $\GL_r(F)$. (Note that in the above
computations, the global partial section $\s$ is always defined.)

Next by using Lemma \ref{L:Tm} and conjugating by $w_{i-1}$, we can compute 
\begin{align*}
\s(w_{i-1}^{-1})
\s(\begin{pmatrix}I_{r-i-1}&&&\\ &t_1&&\\ &&t_2&\\
 & &&I_{i-1}\end{pmatrix})
\s(w_{i-1})
=&\s(w_{i-1}^{-1})\s(w_{i-1})\s(\begin{pmatrix}I_{r-i-1}&&&\\ &t_1&&\\ &&I_{i-1}&\\
 & &&t_2\end{pmatrix}) \\
=&\s(\begin{pmatrix}I_{r-i-2}&&&\\ &t_1&&\\ &&I_{i-1}&\\
 & &&t_2\end{pmatrix}),
\end{align*}
where the last equality follows because $\s$ is a group homomorphism
on $\GL_r(F)$.

Therefore we have
\begin{align*}
&G_i^s(g_2;g)\\=&\underset{\A^{i-1}}{\int}\;\hat{f}^s\Big(
\s(\begin{pmatrix}I_{r-i-2}&&&\\ &t_1&&\\ &&I_{i-1}&\\
 & &&t_2\end{pmatrix})
\s(w_{i-1}^{-1})\s(\begin{pmatrix}I_{r-i-1}&&&\\ &1&&\\ &&1& t_2^{-1}X_{i-1}\\
  &&&I_{i-1}\end{pmatrix})g;1\Big)\,dX_{i-1}.
\end{align*}
Now by (\ref{E:Heisenberg2}) and the change of variables
$t_2^{-1}X_{i-1}\mapsto X_{i-1}$, one can see 
\begin{align*}
G_i^s(g_2;g)=&
|t_1|^{\frac{1}{4}(r-2(r-i))+s+\frac{1}{2}}|t_2|^{(1-r)(s+\frac{1}{2})+i-1}
\omega^\m(\s(\begin{pmatrix}I_{r-i-2}&&&\\ &t_1&&\\ &&I_{i-1}&\\
 & &&t_2\end{pmatrix}))\\
&\quad\underset{\A^{i-1}}{\int}\;\hat{f}^s\Big(
\s(w_{i-1}^{-1})\s(\begin{pmatrix}I_{r-i-1}&&&\\ &1&&\\ &&1& X_{i-1}\\
  &&&I_{i-1}\end{pmatrix})g;1\Big)\,dX_{i-1}.
\end{align*}
By direct computation, one can see
\[
\omega^\m(\s(\begin{pmatrix}I_{r-i-2}&&&\\ &t_1&&\\ &&I_{i-1}&\\
 & &&t_2\end{pmatrix}))
=\omega_2^\m(\s_2(\begin{pmatrix}t_1&\\ &t_2\end{pmatrix})).
\]
Then the lemma follows by simplifying the exponents for
$|t_1|$ and $|t_2|$.
\end{proof}

Let 
\begin{equation}\label{E:M_2(s)}
M_2(t):\ind_{\Tt_{B^2}^\m(\A)N_{B^2}(\A)}^{\GLt_2(\A)}\;\omega_2^\m\otimes
\delta_{B^2}^{t+\frac{1}{2}}|\det|^u
\rightarrow
\ind_{\Tt_{B^2}^\m(\A)N_{B^2}(\A)}^{\GLt_2(\A)}\;\omega_2^\m\otimes
\delta_{B^2}^{-t+\frac{1}{2}}|\det|^u
\end{equation}
be the intertwining operator defined on the induced space in the above
lemma. Then Lemma \ref{L:F_and_G} says that 
\begin{equation}\label{E:F_and_G}
F_i^s(g_2;g)=M_2(t)G_i^s(g_2;g),
\end{equation}
with
\[
t=\frac{1}{2}rs+\frac{1}{8}-\frac{1}{4}i,
\]
provided all the integrals are convergent, which is the case if $\Re(s)>>0$.

With this said, one can prove
\begin{Prop}
Let $i<r-2$. Then for each $g$ and $g_2$, the integral for $F_i^{s}(g_2;g)$ converges
absolutely at $s=s_0$. In particular $c_i(g,s;f^s)=F_i^s(1;g)$
converges at $s=s_0$.
\end{Prop}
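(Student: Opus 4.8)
The plan is to prove the assertion by induction on $i$ over the range $0\le i\le r-3$, in the sharper form that $F_i^s(g_2;g)$ converges absolutely at $s=s_0$ for \emph{every} $g\in\GLt_r(\A)$ and $g_2\in\GLt_2(\A)$; the claim about $c_i$ then follows at once from $c_i(g,s;f^s)=F_i^s(1;g)$. The case $i=0$ is immediate, since $c_0(g,s;f^s)=\hat{f}^s(g;1)$ is the constant term along $N_{B^{r-1,1}}$, an integral over the compact quotient $N_{B^{r-1,1}}(F)\backslash N_{B^{r-1,1}}(\A)$, hence convergent for all $s$. For the base case $i=1$ the $X_0$-integral is trivial and $w_0=I_r$, so $G_1^s(g_2';g)=\hat{f}^s(\iotat_1(g_2')g;1)$ is again an everywhere-convergent function of $g_2'$, and by the lemma preceding this proposition it lies, at $s=s_0$, in $\ind_{\Tmt_2(\A)N_{B^2}(\A)^\ast}^{\GLt_2(\A)}\omega_2^\m\otimes\delta_{B^2}^{r/4}|\det|^{u_0}$ (here $u_0$ is the exponent from that lemma, immaterial below); since $r=2q>2$ gives $r/4\ge1>\tfrac12$, the rank-one intertwining integral is absolutely convergent on it by the last paragraph, whence $F_1^{s_0}(g_2;g)=M_2(t)G_1^{s_0}(g_2;g)$ converges absolutely.

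For the inductive step, fix $i$ with $2\le i\le r-3$ and assume the assertion for $i-1$ (legitimate, as $i-1<r-2$). Taking $g_2=1$ in the inductive hypothesis and using $F_{i-1}^s(1;h)=c_{i-1}(h,s;f^s)$, we conclude that $c_{i-1}(h,s_0;f^s)$ converges absolutely for every $h\in\GLt_r(\A)$. Because $G_i^s(g_2';g)=c_{i-1}(\iotat_i(g_2')g,s;f^s)$, this gives absolute convergence of $G_i^{s_0}(g_2';g)$ for all $g_2',g$, so that $G_i^{s_0}(-;g)$ becomes a genuine (no longer merely formal) section of $\ind_{\Tmt_2(\A)N_{B^2}(\A)^\ast}^{\GLt_2(\A)}\omega_2^\m\otimes\delta_{B^2}^{\frac14(r-i+1)}|\det|^{u_0}$ by the lemma, continuous in $g_2'$ as an absolutely convergent integral of the smooth section $f^s$.

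I would then run the decomposition underlying Lemma \ref{L:F_and_G}, applied to the integrands in absolute value: using $w_i=\iota_i\left(\begin{smallmatrix}&1\\1&\end{smallmatrix}\right)w_{i-1}$ from \eqref{E:w_i} to factor the $\A^{i}$-integral defining $F_i^s$ into the $\A^{i-1}$-integral producing $G_i^s$ followed by a one-dimensional integral along the $\GLt_2$-unipotent direction, Tonelli's theorem yields
\[
\int_{\A^{i}}\bigl|\hat{f}^s(\cdots)\bigr|\,dX_i=\int_{\A}\bigl|G_i^{s}\bigl(\s_2(w_2)\,\s_2(u(x))\,g_2;g\bigr)\bigr|\,dx ,
\]
with $\cdots$ the argument of \eqref{E:integral_c_i_2}, $w_2=\left(\begin{smallmatrix}&1\\1&\end{smallmatrix}\right)$, and $u(x)=\left(\begin{smallmatrix}1&x\\&1\end{smallmatrix}\right)$. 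Hence $F_i^{s_0}(g_2;g)$ converges absolutely exactly when the right-hand integral does, and that integral is nothing but the rank-one intertwining integral $M_2(t)$ applied to the section $G_i^{s_0}(-;g)$.

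The only remaining analytic input is the convergence range of $M_2$. For a continuous section in $\ind_{\Tmt_2(\A)N_{B^2}(\A)^\ast}^{\GLt_2(\A)}\omega_2^\m\otimes\delta_{B^2}^{c}|\det|^{u}$, the standard rank-one identity $w_2u(x)=\diag(-x^{-1},x)\left(\begin{smallmatrix}1&0\\x^{-1}&1\end{smallmatrix}\right)$ (for $x\ne0$), together with $\delta_{B^2}^{c}(\diag(-x^{-1},x))=|x|^{-2c}$, the boundedness of $\left(\begin{smallmatrix}1&0\\x^{-1}&1\end{smallmatrix}\right)$ as $|x|\to\infty$, and the fact that $|\det|^{u}$ and the unitary character $\omega_2^\m$ (unitary since $\chi$ is) do not affect growth, shows the integrand of $M_2$ is $O(|x|^{-2c})$; since $\int_{\A,\,|x|>1}|x|^{-2c}\,dx$ converges precisely for $\Re(c)>\tfrac12$, $M_2(t)$ is absolutely convergent on such a section as soon as its $\delta_{B^2}$-exponent exceeds $\tfrac12$. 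Here that exponent is $\tfrac14(r-i+1)$, and $i\le r-3$ forces $r-i+1\ge4$, so it is at least $1$; hence $M_2(t)G_i^{s_0}(g_2;g)$ — and therefore $F_i^{s_0}(g_2;g)$, and in particular $c_i(g,s_0;f^s)=F_i^{s_0}(1;g)$ — converges absolutely, completing the induction. I expect the real difficulty to lie not in any single estimate but in arranging the induction so that the hypothesis at level $i-1$, stated for \emph{all} group arguments, turns $G_i^{s_0}$ into a bona fide section induced from a \emph{character} of a $\GLt_2$-torus (this is exactly what the $\Tmt$, $\omega^\m$ formalism of Lemma \ref{L:Heisenberg} is for), after which only the classical abscissa $\Re(c)>\tfrac12$ for rank-one intertwining integrals is needed; a lesser but genuine point is the Tonelli-on-absolute-values passage that lets one use the identity of Lemma \ref{L:F_and_G} (stated only for convergent data) to transfer convergence back to the defining $\A^{i}$-integral.
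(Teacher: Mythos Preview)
Your argument is essentially the paper's: induct on $i$, use the hypothesis at level $i-1$ to make $G_i^{s_0}(-;g)$ a well-defined section with $\delta_{B^2}$-exponent $\tfrac14(r-i+1)$, and then invoke absolute convergence of the rank-one $\GLt_2$ intertwining integral once that exponent exceeds $\tfrac12$. The paper phrases the last step more tersely (``the intertwining operator $M_2(\cdots)$ converges''), while you supply the standard $|x|^{-2c}$ estimate; the logic is the same.

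One small slip to fix: your displayed Tonelli identity is not literally true. Tonelli gives
\[
\int_{\A^{i}}\bigl|\hat f^s(\cdots)\bigr|\,dX_i=\int_{\A}\Bigl(\int_{\A^{i-1}}\bigl|\hat f^s(\cdots)\bigr|\,dX_{i-1}\Bigr)\,dx,
\]
and the inner integral is \emph{not} $|G_i^s(\s_2(w_2)\s_2(u(x))g_2;g)|$ (the modulus of the signed integral) but the nonnegative function $\widetilde G_i(g_2'):=\int_{\A^{i-1}}|\hat f^s(\cdots)|\,dX_{i-1}$, which is finite precisely because the inductive hypothesis asserts \emph{absolute} convergence at level $i-1$. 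Since $\omega_2^{\m}$ is unitary, $\widetilde G_i$ transforms on the left under $\Tmt_2(\A)N_{B^2}(\A)^\ast$ by $\delta_{B^2}^{\frac14(r-i+1)}$, so your rank-one convergence estimate applies verbatim to $\widetilde G_i$ and yields $\int_{\A^{i}}|\hat f^s|\,dX_i<\infty$ as required. With this adjustment the proof is complete and matches the paper's.
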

\begin{proof}
We prove it by induction on $i$. For $i=1$, first note that
$G_1^s(g_2;g)=\hat{f}^s(\iotat_1(g_2)g;1)$, and hence certainly
$G_1^s(g_2;g)$ converges at any $s$. Now at $s=s_0$ by the previous
two lemmas, one can see that 
\[
F_1^s(g_2;g)=M_2(\frac{1}{4}(r-i-1)) G_1^s(g_2;g),
\]
where $M_2$ is as in (\ref{E:F_and_G}). But if $i=1$ (and $r>3$), the intertwining operator
$M_2(\frac{1}{4}(r-i-1))$ converges.

Now assume $F_i^s(g_2;g)$ converges for all $g_2$ and $g$ at $s=s_0$. Notice
that 
\[
G_{i+1}^s(g_2;g)=G_{i+1}^s(1;\iotat_{i+1}(g_2)g)=F_i^s(1;\iotat_{i+1}(g_2)g).
\]
Hence $G_{i+1}^s(g_2;g)$ converges for all $g_2$ and $g$. Again by the
previous two lemmas, one can see that 
\[
F_{i+1}^s(g_2;g)=M_2(\frac{1}{4}(r-i)-\frac{1}{2}) G_{i+1}^s(g_2;g),
\]
and if $i+1<r-2$, the intertwining operator here converges.
\end{proof}

Let us note that the convergence of $c_i(g,s;f^s)$ for $i<r-2$ is
stated without proof at the end of p.202 of \cite{BG} for the
non-twisted case. The author believes that the above proof is the one
they have in mind.

Finally we show the cancellation of the possible poles for
$c_{r-1}(g,s;f^s)$ and $c_{r-2}(g,s;f^s)$ at $s=s_0$, namely
\begin{Prop}
The sum
\[
c_{r-1}(g,s;f^s)+c_{r-2}(g,s;f^s)
\]
is holomorphic at $s=s_0$.
\end{Prop}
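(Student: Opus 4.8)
The plan is to reduce the statement to the behaviour of the $\GLt_2$ intertwining operator $M_2$ at the point where it degenerates to $-\Id$. Recall $c_i(g,s;f^s)=F_i^s(1;g)$ and, by (\ref{E:F=G}), $G_{r-1}^s(1;g)=F_{r-2}^s(1;g)=c_{r-2}(g,s;f^s)$, while more generally $G_{r-1}^s(g_2;g)=c_{r-2}(\iotat_{r-1}(g_2)g,s;f^s)$. Lemma \ref{L:F_and_G}, in the form (\ref{E:F_and_G}), gives for $\Re(s)\gg 0$, and then by meromorphic continuation,
\[
c_{r-1}(g,s;f^s)=F_{r-1}^s(1;g)=\big[M_2(t(s))\,G_{r-1}^s(-;g)\big](1),
\]
where $M_2$ is the operator of (\ref{E:M_2(s)}) and $t(s)$ is the affine function of $s$ furnished by the lemma expressing $G_{r-1}^s(-;g)$ as a section of a $\GLt_2$ induced representation. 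By that lemma (with $i=r-1$), at $s=s_0:=\frac14-\frac1{2r}$ the relevant space is $\ind_{\Tmt_2(\A)N_{B^2}(\A)^\ast}^{\GLt_2(\A)}\omega_2^\m\otimes\delta_{B^2}^{1/2}|\det|^{u_0}$, so $t(s_0)=0$. Hence
\[
c_{r-1}(g,s;f^s)+c_{r-2}(g,s;f^s)=\big[(M_2(t(s))+\Id)\,G_{r-1}^s(-;g)\big](1),
\]
and it suffices to show the right-hand side is holomorphic at $s_0$.

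Since the possible pole at $s_0$ occurs only when $\chi^2\eta^{-2}=1$, we may assume $\eta=\chi$ by Proposition \ref{P:tensor_unique}. In the $\GLt_2$ parameter $M_2(t)$ is the intertwining operator of the $r=2$ case, so by the $r=2$ instance of Theorem \ref{T:normalized_intertwining} and by Proposition \ref{P:action_at_s=0} it is holomorphic near $t=0$ and $M_2(0)$ acts as $-\Id$ (the $|\det|^{u_0}$-twist is fixed by the Weyl element $\left(\begin{smallmatrix}&1\\1&\end{smallmatrix}\right)$ and hence does not affect the normalized operator). Consequently, as $t(s)$ is affine with $t(s_0)=0$, the operator-valued function $s\mapsto M_2(t(s))+\Id$ is holomorphic near $s_0$ and vanishes there. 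It then remains to show that $G_{r-1}^s(-;g)$, equivalently $c_{r-2}(\cdot,s;f^s)$, has at most a simple pole at $s_0$ (as a function of $s$ with values in the $\GLt_2$ induced space); for then the product $(M_2(t(s))+\Id)\,G_{r-1}^s(-;g)$ is holomorphic at $s_0$, and evaluation at $g_2=1$ gives the proposition.

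To see the simple-pole bound, apply Lemma \ref{L:F_and_G} once more: $c_{r-2}(h,s;f^s)=F_{r-2}^s(1;h)=\big[M_2(t'(s))\,G_{r-2}^s(-;h)\big](1)$ with $G_{r-2}^s(g_2';h)=c_{r-3}(\iotat_{r-2}(g_2')h,s;f^s)$, and by the locating lemma (now with $i=r-2$) the $\GLt_2$ parameter at $s_0$ is $t'(s_0)=\frac14$. But $c_{r-3}(\cdot,s;f^s)$, hence $G_{r-2}^s(-;h)$, converges absolutely at $s_0$ by the previous proposition (here $r-3<r-2$, using $r=2q>2$), while $M_2(t')$ has at most a simple pole at $t'=\frac14$ again by the $r=2$ case of Theorem \ref{T:normalized_intertwining}. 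Therefore $c_{r-2}$, and with it $G_{r-1}^s(-;g)$, has at most a simple pole at $s_0$, which completes the argument.

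The main obstacle I anticipate is the analytic bookkeeping around taking residues through the identity $F_{r-1}^s=M_2(t(s))\,G_{r-1}^s$: one must verify that the residue of $G_{r-1}^s(-;g)$ at $s_0$ is an honest section of $\ind_{\Tmt_2(\A)N_{B^2}(\A)^\ast}^{\GLt_2(\A)}\omega_2^\m\otimes\delta_{B^2}^{1/2}|\det|^{u_0}$ on which the endomorphism $M_2(0)=-\Id$ is defined, that the meromorphic continuation of the integral defining $M_2$ is compatible with this, and that the $|\det|^u$-twists carried along throughout (with varying exponents $u$) genuinely drop out of the value of the normalized $\GLt_2$ intertwining operator. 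Once these are in place the cancellation of the residues at $s_0$ follows at once from Proposition \ref{P:action_at_s=0}.
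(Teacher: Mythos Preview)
Your proof is correct and follows essentially the same route as the paper: both use $c_{r-1}=M_2(t)G_{r-1}^s$ with $t(s_0)=0$ and $M_2(0)=-\Id$, together with the fact that $G_{r-1}^s(1;g)=c_{r-2}(g,s;f^s)$ has at most a simple pole at $s_0$ (itself deduced from $c_{r-2}=M_2(t')G_{r-2}^s$ with $t'(s_0)=\tfrac14$ and the absolute convergence of $G_{r-2}^s=c_{r-3}$). The only cosmetic difference is that you package the cancellation as ``$(M_2(t(s))+\Id)$ vanishes to first order at $s_0$, $G_{r-1}^s$ has at most a simple pole,'' whereas the paper computes the two residues separately and observes they are negatives of one another.
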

\begin{proof}
The proof is essentially described in the first half of p.203 of
\cite{BG}. But we will repeat the argument with our notations.  

First note that
\[
c_{r-2}(g,s;f^s)=F_{r-2}^s(1,g)=M_2(t)G_{r-2}^s(1;g),
\]
and by the above proposition, $G_{r-2}^s(1;g)=F_{r-3}^s(1;g)$ is
convergent. With $i=r-2$ and $s=s_0$, one can see
$t=\frac{1}{4}$. But at $t=\frac{1}{4}$, the intertwining operator
$M_2(t)$ has a simple pole. Then we have
\[
\underset{s=s_0}{\Res}c_{r-2}(g,s;f^s)=\underset{s=s_0}{\Res}F_{r-2}^s(1,g)
=\underset{s=s_0}{\Res}G_{r-1}^s(1,g),
\]
where for the last equality we used (\ref{E:F=G}). Second note that
\[
c_{r-1}(g,s;f^s)=F_{r-1}^s(1;g)=M(t)G_{r-1}^s(1;g),
\]
and with $i=r-1$ and $s=s_0$, one can see $t=0$, and we know by
Proposition \ref{P:action_at_s=0} that the intertwining operator
$M_2(0)$ is holomorphic and acts as $-\Id$.  Therefore we have
\[
\underset{s=s_0}{\Res}c_{r-1}(g,s;f^s)=M(0)\underset{s=s_0}{\Res}G_{r-1}^s(1;g)
=-\underset{s=s_0}{\Res}G_{r-1}^s(1;g)=-\underset{s=s_0}{\Res}
c_{r-2}(g,s;f^s)
\]
Hence the residues get cancelled out. 
\end{proof}

With those two propositions, we have proven that the constant term
(\ref{E:constant_term_even4}) is holomorphic at $s=s_0$, and hence the
Eisenstein series $E(g,s;f^s)$ is holomorphic at $s=s_0$.
\quad


\subsection{\bf The induction step for $r=2q+1$}


We now consider the case $r=2q+1$, namely $\theta=\vartheta_{\chi,
  \eta}$. First of all, let us note that if $\chi^{1/2}$ exists,
then $\vartheta_{\chi,\eta}=\theta_{\chi^{1/2},\eta}$, to which case
the argument for $r=2q$ applies. Hence we will assume that
$\chi^{1/2}$ does not exist. But even in this case, we argue similarly
to the case $r=2q$.

This time, however, the cuspidal support
of our Eisenstein series $E(g,s;f^s)$ is the $(2,\dots,2,1)$-parabolic,
and hence the poles of the Eisenstein series are
precisely the poles of the constant term along any parabolic
containing the $(2,\dots,2,1)$-parabolic. In particular in this subsection we let 
\[
P=P_{2,r-2}=M_PN_P\subseteq\GL_r
\]
be the $(2,r-2)$-parabolic, and will consider the constant term along
the unipotent radical $N_P$ of this parabolic. Similarly to the
computation for the case $r=2q$, the constant term along
$N_P$ is computed as follows:
\begin{align}\label{E:constant_term_odd1}
E_{P}(g,s; f^s)
&=\sum_{\gamma\in Q(F)\backslash\GL_r(F)\slash
  N_P(F)}\int_{N_P(F)^{\gamma^{-1}}\backslash
  N_P(\A)}f^s(\s(\gamma n)g;1)\,dn,
\end{align}
where $N_P(F)^{\gamma^{-1}}=N_P(F)\cap\gamma^{-1}Q(F)\gamma$. 
Since, by the Bruhat decomposition, we have
\[
\GL_r(F)=
\bigcup_{w\in\{1, w_1\}} Q(F)w^{-1} P(F),
\]
where $w_1$ is as in \eqref{E:w_1}, we have
\[
Q\backslash\GL_r\slash N_P
=\bigcup_{w\in\{1,w_1\}}M_P\cap wQw^{-1}\backslash M_P,
\]
where
\[
M_P\cap wQw^{-1}\backslash M_P=
\begin{cases}
P_{r-3, 1}^{r-2}\backslash \GL_{r-2},&\text{if $w=1$};\\
\overline{B}^2\backslash \GL_2,&\text{if $w=w_1$},
\end{cases}
\]
where $\GL_{r-2}$ is viewed as a subgroup of $\GL_r$ embedded in the
lower right corner, and $P_{r-3, 1}^{r-2}$ is the $(r-3, 1)$-parabolic
of $\GL_{r-2}$, and $\GL_2$ is embedded in the upper left corner
and $\overline{B}^2$ is the opposite of the standard Borel subgroup of
$\GL_2$. Using this decomposition, one can write (\ref{E:constant_term_odd1})
as
\begin{align*}
E_{P}(g,s;f^s)
=&E_P(g,s;f^s)_{\Id}+E_P(g,s;f^s)_{w_1},
\end{align*}
where we have set
\begin{align*}
E_P(g,s;f^s)_{\Id}&:=\sum_{m\in P_{r-3, 1}^{r-2}(F)\backslash\GL_{r-2}(F)}
\int_{N_P(F)\backslash N_P(\A)}f^s(\s(m n)g;1)\,dn;\\
E_P(g,s;f^s)_{w_1}&:=\sum_{m\in \overline{B}^2(F)\backslash \GL_{2}(F)}
\int_{N_P(F)^{m^{-1}w_1}\backslash N_P(\A)}f^s(\s(w_1^{-1}mn)g;1)\,dn.
\end{align*}

In what follows, we will show that the identity term
$E_P(g,s;f^s)_{\Id}$ is holomorphic for $\Re(s)\geq 0$ and the
non-identity term $E_P(g,s;f^s)_{w_1}$ vanishes, which will complete the
proof of Theorem \ref{T:main1}.

\quad\\

\noindent\underline{\bf The identity term $E_P(g,s;f^s)_{\Id}$}:

The argument for the identity term is quite similar to the case
$r=2q$ in that we interpret it as the Eisenstein series on the smaller
group $\GLt_{r-2}$ and use induction. Also let us mention that, as we
will see, unlike the case $r=2q$, we will not have to show the
cancellation of the pole at $s=\frac{1}{4}-\frac{1}{2r}$. 

First one can write
\begin{equation}\label{E:identity_term2}
E_P(g,s;f^s)_{\Id}
=\sum_{m\in P_{r-3, 1}^{r-2}(F)\backslash\GL_{r-2}(F)}
\int_{N_{P'}(F)\backslash
  N_{P'}(\A)}f^s(\s(m)g;\s(n'))\,dn',
\end{equation}
where 
\[
P'=P^{r-1,1}_{2,r-3,1}=P\cap M_Q=(\GL_2\times\GL_{r-3}\times\GL_1)N_{P'},
\]
so $P'$ is the parabolic subgroup of
$\GL_{r-1}\times\GL_1$ whose Levi is $\GL_2\times\GL_{r-3}\times\GL_1$.
If one views $f^s(\s(m)g;-)$ as an automorphic
form in $\theta\otimes\delta_Q^{s+1/2}$ on $\MQt(\A)$, the integral in the above
sum is just the constant term
along $N_{P'}$. But since $f^s(\s(m)g;-)\in\theta\otimes\delta_Q^{s+1/2}$,
we need to compute the constant term of the residual representation
$\theta\otimes\delta_Q^{s+1/2}$ along $N_{P'}$. Recall that the exceptional
representation $\theta$ is constructed as the residue of the
Eisenstein series associated to the
induced representation
$\Ind_{\Pt^{r-1,1}_{2,\dots,2,1}}^{\MQt}
(\rr_{\chi}\otimest\cdots\otimest\rr_{\chi}\otimest\etat)_\omega^\nu$
at $\nu=\rho_{P^{r-1,1}_{2,\dots,2,1}}/2$, where
$P_{2,\dots,2,1}^{r-1,1}$ is the $(2,\dots,2,1)$-parabolic  subgroup of
$\GL_{r-1}\times\GL_1$. Namely it is generated by
\[
\underset{\nu=\frac{1}{2}\rho_{P_{2,\dots,2,1}^{r-1,1}}}{\Res}E(-, \varphi^\nu;\nu)
\]
for $\varphi^\nu\in\Ind_{\Pt^{r-1,1}_{2,\dots,2,1}}^{\MQt}
(\rr_{\chi}\otimest\cdots\otimest\rr_{\chi}\otimest\etat)_\omega^\nu$. But
the constant
term of the residue is the same as the residue of the constant term,
and hence one first needs to compute the constant term $E_{P'}(-,
\varphi^\nu;\nu)$ of $E(-,
\varphi^\nu;\nu)$ along $N_{P'}$. For this, one can use
\cite[Proposition (ii), p.92]{MW}, and obtain
\[
E_{P'}(m',\varphi^\nu;\nu)
=\sum_w
E^{M_{P'}}(m',M(w,\nu)\varphi^\nu),
\]
where $m'\in\widetilde{M_{P'}}$ and $w$ runs through all the Weyl
group elements of $\GL_{r-1}\times\GL_1$
such that $w(\GL_2\times\cdots\times\GL_2\times\GL_1)w^{-1}$ is a standard
Levi of $\GL_2\times\GL_{r-3}\times\GL_1$ and $w^{-1}(\alpha)>0$ for
all the positive roots $\alpha$ that are
in $M_{P'}=\GL_2\times\GL_{r-3}\times\GL_1$. One can see that by using
the language of permutations, $w$
runs through all the elements of the form
\[
w=(12\cdots k),\quad \text{for $k=1,\dots,q$},
\]
where each permutation corresponds to a permutation of $\GL_2$-blocks
in the Levi $\GL_2\times\cdots\times\GL_2\times\GL_1$. (Note that the
last $\GL_1$ is always fixed by $w$.) But by exactly the same
reasoning as the case $r=2q$, one
can see that the Eisenstein series has a residue at
$\nu=\rho_{P_{2,\dots,2,1}^{r-1,1}}/2$ only for $w=(12\cdots q)$ by
using \cite[Proposition 2.42]{Takeda1}.

Now as in the case $r=2q$, we can see
\begin{align*}
\frac{1}{2}w\rho_{P_{2,\dots,2,1}^{r-1,1}}
&=-\frac{1}{2}\rho_{P^{r-1,1}_{2,r-3,1}}+\frac{1}{2}\rho_{P_{2,\dots,2,1}^{2,r-2,1}},
\end{align*}
where we note that $-\frac{1}{2}\rho_{P^{r-1,1}_{2,r-3,1}}$  is a character on the Levi
$M_{P^{r-1,1}_{2,r-3,1}}=\GL_2\times\GL_{r-3}\times\GL_1$ which acts as
\[
(a, g_{r-3}, b)\mapsto 
|\det a|^{-\frac{r-3}{4}} |{\det}(g_{r-3})|^{\frac{1}{2}}
\]
for $(a, g_{r-3}, b)\in \GL_2\times\GL_{r-3}\times\GL_1$. Hence for
$m'\in M_{P'}=\GL_2\times\GL_{r-3}\times\GL_1$, we have
\[
\underset{\nu=\frac{1}{2}\rho_{P_{2,\dots,2,1}^{r-1,1}}}{\Res}E^{M_{P'}}(m',M(w,\nu)\varphi^\nu)
=\underset{\nu'=\frac{1}{2}\rho_{P_{2,\dots,1,1}^{2,r-3,1}}}{\Res}E^{M_{P'}}(m',{\varphi'}^{\nu'})
\]
where 
\[
{\varphi'}^{\nu'}\in\Ind_{\Pt_{2,\dots,2,1}^{2,r-3,1}}^{\Mt_{P'}}(\rr_{\chi}|{\det}_2|^{-\frac{r-3}{4}}
\,\otimest\,(\underbrace{\rr_{\chi}\,\otimest\cdots\otimest\,\rr_{\chi}}
_{q-1 \text{ times}})|{\det}_{r-3}|^{\frac{1}{2}}\,\otimest\,\etat)_\omega^{\nu'}
\delta_{P_{2,\dots,2,1}^{r-1,1}}^{1/2}\delta_{P_{2,\dots 2,1}^{2,r-3,1}}^{-1/2}.
\]
By computing $\delta_{P_{2,\dots,2,1}^{r-1,1}}^{1/2}\delta_{P_{2,\dots 2,1}^{2,r-3,1}}^{-1/2}$,
this induced representation is written as
\[
\Ind_{\Pt_{2,\dots,2,1}^{2,r-3,1}}^{\Mt_{P'}}(\rr_{\chi}|{\det}_2|^{\frac{r-3}{4}}\,\otimest\,
(\rr_{\chi}\,\otimest\cdots\otimest\,\rr_{\chi})|{\det}_{r-3}|^{-\frac{1}{2}}\,
\otimest\,\etat)_\omega^{\nu'}.
\]
Then as in the case of $r=2q$, we see that 
\[
\underset{\nu'=\frac{1}{2}\rho_{P_{2,\dots,2,1}^{2,r-3,1}}}{\Res}E^{M_{P'}}(-,{\varphi'}^{\nu'})
\in(\rr_{\chi}|{\det}_2|^{\frac{r-3}{4}}\,\otimest\,\theta'|{\det}_{r-3}|^{-\frac{1}{2}}\,
\otimest\,\etat)_\omega,
\]
where $\theta'$ is the twisted exceptional representation of $\GLt_{r-3}$
which is the unique irreducible quotient of the induced representation
\[
\Ind_{\Pt_{2,\dots,2}^{r-3}}^{\GLt_{r-3}}
(\underbrace{\rr_{\chi}\,\otimest\cdots\otimest\,\rr_{\chi}}_{q-1\text{ times}})_{\omega'}
\otimes\delta_{P_{2,\dots,2}^{r-3}}^{1/4}
\]
for an appropriate choice of $\omega'$.

Hence for each fixed $m\in\GL_{r-2}(F)$, the function on
$\Mt_{P'}(\A)$ given by
\[
m'\mapsto \int_{N_{P'}(F)\backslash
  N_{P'}(\A)}f^s(\s(m)g;\s(n')m')\,dn'
\]
is an element in
$(\rr_{\chi}|{\det}_2|^{\frac{r-3}{4}}\,\otimest\,\theta'|{\det}_{r-3}|^{\frac{1}{2}}\,
\otimest\,\etat)_\omega\otimes\delta_Q^{s+\frac{1}{2}}$, where
$\delta_Q^{s+\frac{1}{2}}$ is actually the restriction
$\delta_Q^{s+\frac{1}{2}}|_{P'}$ to $P'$. One can compute
\[
\delta_Q^{s+\frac{1}{2}}(a, g_{r-3},
b)=|\det a|^{s+\frac{1}{2}}|{\det}(g_{r-2})|^{s+\frac{1}{2}}
|b|^{-(r-1)(s+\frac{1}{2})}.
\]
Accordingly the
function $f_{N_{P'}}^s$ on $\MPt(A)$ defined by
\[
f_{N_{P'}}^s(g)= \int_{N_{P'}(F)\backslash
  N_{P'}(\A)}f^s(g;\s(n'))\,dn'
\]
is in
\begin{align*}
&\Ind_{\widetilde{P'}(\A)}^{\MPt(\A)}(\rr_{\chi}|{\det}_2|^{\frac{r-3}{4}}\,\otimest\,\theta'
|{\det}_{r-3}|^{-\frac{1}{2}}\,\otimest\,\etat)_\omega\otimes
\delta_Q^{s+\frac{1}{2}}\delta_{P'}^{-1/2}\\
=&\Ind_{\widetilde{P'}(\A)}^{\MPt(\A)}(\rr_{\chi}|{\det}_2|^{-\frac{r-3}{4}}\,\otimest\,\theta'
|{\det}_{r-3}|^{\frac{1}{2}}\,\otimest\,\etat)_\omega\otimes\delta_Q^{s+\frac{1}{2}}.
\end{align*}

Again as in the $r=2q$ case, by restricting the section to
$\GLt_{r-3}$ (Proposition \ref{P:tensor_restriction}) one only has to consider the analytic
behavior of the $\GLt_{r-3}$ Eisenstein series 
\[
\sum_{m\in P_{r-3, 1}^{r-2}(F)\backslash\GL_{r-2}(F)}F^s(\s(m)g)
\]
where 
\[
F^s\in\Ind_{\Pt^{r-2}_{r-3,1}(\A)}^{\GLt_{r-2}(\A)}(\theta'
|{\det}_{r-3}|^{\frac{1}{2}}\,\otimest\,\etat)_{\omega}\otimes\delta_Q^{s+\frac{1}{2}}
\delta_{P'}^{1/2}\delta_{P_{r-3,1}^{r-2}}^{-1/2}
\]
and $\omega$ is some
appropriately chosen character and 
$\delta_Q^{s+\frac{1}{2}}\delta_{P'}^{1/2}\delta_{P_{r-3,1}^{r-2}}^{-1/2}$
is restricted to $P_{r-3,1}^{r-2}$. 
As a character on $\GL_{r-3}\times\GL_1\subseteq P_{r-3,1}^{r-2}$, one
can compute
\begin{align*}
|\det(g_{r-3})|^{\frac{1}{2}}
\delta_Q^{s+\frac{1}{2}}\delta_{P'}^{1/2}\delta_{P_{r-3,1}^{r-2}}^{-1/2} (g_{r-3},b)&=
|\det(g_{r-3})|^{s-\frac{1}{2}}|b|^{-(s+\frac{1}{2})(r-1)+\frac{1}{2}(r-3)}\\
&=\delta_{P_{r-3,1}^{r-2}}(g_{r-3}, b)^{\frac{rs+\frac{1}{4}}{r-1}}
|\det(g_{r-3})b|^{\frac{-\frac{1}{4}r-s}{r-1}}.
\end{align*}
Thus the section $F^s$ belongs to
\begin{equation}\label{E:induced_space_r-2}
\Ind_{\Pt^{r-2}_{r-3,1}(\A)}^{\GLt_{r-2}(\A)}(\theta'\otimest\,\etat)_{\omega}\otimes
\delta_{P_{r-3,1}^{r-2}}^b|{\det}_{r-2}|^{a},
\end{equation}
where
\begin{equation}\label{E:exponent_b2}
a=\frac{-\frac{1}{2}r-2s-\frac{1}{2}}{r-2},\qquad b=\frac{rs+\frac{3}{2}}{r-2}.
\end{equation}

Therefore the analytic behavior of the Eisenstein series in
(\ref{E:identity_term2}) is determined by that of the Eisenstein
series associated to the induced representation
(\ref{E:induced_space_r-2}). But the twist by $|{\det}_{r-2}|^{a}$
does not have any affect on the analytic behavior, and hence we have
to consider the Eisenstein series on $\GLt_{r-2}(\A)$ associated with
the induced space
\[
\Ind_{\Pt^{r-2}_{r-3,1}(\A)}^{\GLt_{r-2}(\A)}(\theta'\otimest\,\etat)_{\omega}\otimes
\delta_{P_{r-3,1}^{r-2}}^{b},
\]
where $b$ is as in (\ref{E:exponent_b2}).

Now by the induction hypothesis, this Eisenstein series on
$\GLt_{r-2}(\A)$ is holomorphic for $\Re(b)\geq 0$, which implies
that it is holomorphic for $\Re(s)\geq-\frac{3}{2r}$. (Note that the
induction is on $q$ and the base step is $q=1$ \ie $r=3$.) Thus
(\ref{E:identity_term2}) is holomorphic for $\Re(s)\geq 0$.

\quad

\noindent\underline{\bf The non-identity term $E_P(g,s;f^s)_{w_1}$}:

We will show that the non-identity term vanishes, \ie $E_P(g,s;f^s)_{w_1}=0$, which will
complete our proof. First note that the non-identity term is written as
\begin{align}\label{E:non-identity_term2}
E_P(g,s;f^s)_{w_1}&=\sum_{m\in \overline{B}^2(F)\backslash \GL_{2}(F)}
\int_{N_P(F)^{m^{-1}w_1}\backslash
  N_P(\A)}f^s(\s(w_1^{-1}mn)g;1)\,dn\\
&=\sum_{m\in \overline{B}^2(F)\backslash \GL_{2}(F)}
\int_{N_P(F)^{m^{-1}w_1}\backslash N_P(\A)}f^s(\s(w_1^{-1})\s(mn)g;1)\,dn,\notag
\end{align}
where we used Lemma \ref{L:section_s} and the fact that $\s$ is a homomorphism
on $\GL_r(F)$.  Then as we will see, each integral in the sum vanishes
due to the ``cuspidality''. But as we
just have done, to move round the section $\s$ we will frequently use Lemma
\ref{L:section_s} and the fact that $\s$ is a homomorphism on both
$\GL_r(F)$ and $N_B(\A)$, and the Weyl group elements are in
$\GL_r(F)$. The reader can verify that each manipulation on $\s$ can
be justified.

By the change of variable $mnm^{-1}\mapsto n$,
(\ref{E:non-identity_term2}) becomes
\begin{equation}\label{E:non-identity_term3}
\sum_{m\in \overline{B}^2(F)\backslash \GL_{2}(F)}
\int_{N_P(F)^{w_1}\backslash N_P(\A)}f^s(\s(w_1^{-1})\s(n)\s(m)g;1)\,dn.
\end{equation}
Let us introduce
\[
N_1=\{\begin{pmatrix}1&0&n_1\\
  &1&0
\\ &&I_{r-2}\end{pmatrix}\}
\quad\text{and}\quad
N_2=\{\begin{pmatrix}1&0&0\\
  &1&n_2
\\ &&I_{r-2}\end{pmatrix}\},
\]
\ie $N_1$ is the first row and $N_2$ is the second row of
$N_P$. Note that $N_P=N_2N_1$. Then by direct computation we see $N_P^{w_1}=N_2$.
Further let us write $N_2=UV$, where
\[
U=\{\begin{pmatrix}1&0&0&0\\ &1& 0& u\\
&&I_{r-3}&0\\ &&&1\end{pmatrix}\},
\quad\text{and}\quad
V=\{\begin{pmatrix}1&0&0&0\\ &1& v& 0\\
&&I_{r-3}&0\\ &&&1\end{pmatrix}\},
\]
where $v$ is $1\times (r-3)$ and $u$ is $1\times 1$, so
$n_2=(v,u)$. Note that $N_2=UV$. With those notations, each integral
in the sum of (\ref{E:non-identity_term3}) is written as
\begin{align}\label{E:non-identity_term4}\allowdisplaybreaks
\notag&\int_{N_1(\A)}\int_{N_2(F)\backslash N_2(\A)}
f^s(\s(w_1^{-1})\s(n_2)\s(n_1)\s(m)g;1)\,dn_2\,dn_1\\
=&\int_{N_1(\A)}\int_{V(F)\backslash V(\A)}\int_{U(F)\backslash U(\A)}
f^s(\s(w_1^{-1}vn_1)\s(m)g;\;\s(w_1^{-1})\s(u)\s(w_1))\,du\,dv\,dn_1
\end{align}
where for the last equality we used
\[
w_1^{-1}uw_1=
\begin{pmatrix}I_{r-3}&&&\\ 0&1&&\\0&u&1&\\ 0&0&0&1\end{pmatrix}
\in Q.
\]
Now let 
\[
w_2=\begin{pmatrix}I_{r-3}&0&0&0\\ 0&0&1&0\\0&1&0&0\\ 0&0&0&1\end{pmatrix},
\]
which is an element in $M_Q(F)$. By the automorphy of the automorphic
form $f^s(\s(w_1^{-1}vn_1)\s(m)g;-)$, the integrand of
(\ref{E:non-identity_term4}) is written as
\begin{align*}
&f^s(\s(w_1^{-1}vn_1)\s(m)g;\;\s(w_1^{-1})\s(u)\s(w_1))\\
=&f^s(\s(w_1^{-1}vn_1)\s(m)g;\;\s(w_2^{-1}w_1^{-1}uw_1w_2w_2^{-1}w_1^{-1})\s(w_1)),
\end{align*}
where we again used  Lemma \ref{L:section_s}. Note that
\[
w_2^{-1}w_1^{-1}uw_1w_2=
\begin{pmatrix}I_{r-3}&&&\\ 0&1&u&\\0&0&1&\\ 0&0&0&1\end{pmatrix}
\in N_B.
\]
Therefore the inner most integral of (\ref{E:non-identity_term4}) is written as
\[
\int_{F\backslash \A}
f^s(\s(w_1^{-1}vn_1)\s(m)g;\;\s(\begin{pmatrix}I_{r-3}&&&\\
  0&1&u&\\0&0&1&\\
  0&0&0&1\end{pmatrix})\s(w_2^{-1}w_1^{-1})\s(w_1))\,du
\]
But the cuspidal suport of the automorphic form
$f^s(\s(w_1^{-1}n_1)\s(m)g;-)$ is the
$(2,\dots,2,1)$-parabolic, which makes this integral vanish.\\


\section{\bf The normalized Eisenstein series}


Following \cite{BG}, we normalize the Eisenstein series by using the
denominators of (\ref{E:normalizing_factor_even}) and
(\ref{E:normalizing_factor_odd}). Namely we set
\begin{equation}\label{E:normalized_Eisenstein_series}
E^\ast(g, s;f^s)=
\begin{cases}L^S(r(2s+\frac{1}{2}),\chi^2\eta^{-2}) E(g, s;
  f^s),&\text{if   $\theta=\theta_{\chi, \eta}$};\\
L^S(r(2s+\frac{1}{2}),\chi\eta^{-2}) E(g, s; f^s),&\text{if
  $\theta=\vartheta_{\chi, \eta}$},
\end{cases}
\end{equation}
where $S=S(\chi,\eta, \omega, f^s)$ is a finite set of places containing all
the bad places with respect to $\chi, \eta, \omega$ and $f^s$; namely $S$
contains all the archimedean places, the places where $\chi, \eta$ or $\omega$
is ramified, the places dividing 2, and the places where $f^s$ is ramified. (Note that here
$\omega$ is the central character used to define the metaplectic
tensor product.) Let us emphasize that the normalization depends on
the set $S$, which in turn depends on $f^s$. 

Then we have
\begin{Thm}\label{T:main2}
Assume $r>2$. The above normalized Eisenstein series $E^\ast(g, s;f^s)$ is
holomorphic for all $s\in\C$, except that if $\chi^2\eta^{-2}=1$ and $r=2q$, or
$\chi\eta^{-2}=1$ and $r=2q+1$, it has a possible simple pole at 
$s=\frac{1}{4}$ and $-\frac{1}{4}$.
\end{Thm}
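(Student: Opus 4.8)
The plan is to deduce the theorem from Theorem~\ref{T:main1}, which already settles the half-plane $\Re(s)\geq 0$ once the normalizing factor is accounted for, together with the functional equation of the Eisenstein series, which propagates the holomorphy to $\Re(s)<0$. First consider the region $\Re(s)\geq 0$. Here one just writes $E^\ast(g,s;f^s)=L^S(r(2s+\frac{1}{2}),\chi^2\eta^{-2})\,E(g,s;f^s)$ (resp.\ with $\chi\eta^{-2}$ if $r=2q+1$). The incomplete Hecke $L$-function $L^S(w,\psi)$ is given by an absolutely convergent Euler product, hence is holomorphic and nonvanishing, for $\Re(w)>1$. Since $r>2$ we have $\Re\bigl(r(2s+\frac{1}{2})\bigr)=2r\,\Re(s)+\frac{r}{2}\geq\frac{r}{2}>1$ whenever $\Re(s)\geq 0$, so the normalizing factor is holomorphic and nonzero on the closed half-plane $\Re(s)\geq 0$. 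Consequently $E^\ast(g,s;f^s)$ has exactly the singularities of $E(g,s;f^s)$ there, and Theorem~\ref{T:main1} gives: $E^\ast(g,s;f^s)$ is holomorphic for $\Re(s)\geq 0$, except for a possible simple pole at $s=\frac{1}{4}$ when $\chi^2\eta^{-2}=1$ and $r=2q$ (resp.\ $\chi\eta^{-2}=1$ and $r=2q+1$).

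For the region $\Re(s)<0$ I would invoke the functional equation of Eisenstein series (\cite[Theorem IV.1.10]{MW}, valid verbatim for $\GLt_r(\A)$) in the form
\[
E(g,s;f^s)=E'\bigl(g,-s;A(s,\theta,w_1)f^s\bigr),
\]
where $E'$ is the Eisenstein series attached to the section $A(s,\theta,w_1)f^s$ in the space $\Ind_{{}^{w_1}(\MQt)N_{1,r-1}^{\ast}}^{\GLt_r}{}^{w_1}\theta\otimes\delta_Q^{-s}$ of Theorem~\ref{T:normalized_intertwining}. Multiplying through by $L^S(r(2s+\frac{1}{2}),\cdot)$ and using linearity of $E'$ in its section, one gets $E^\ast(g,s;f^s)=E'\bigl(g,-s;A^\ast(s,\theta,w_1)f^s\bigr)$. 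Now $w_1$ interchanges the two factors of $M_Q=\GL_{r-1}\times\GL_1$ (up to an internal permutation of the $\GL_{r-1}$-block and a choice of central character, both harmless by Propositions~\ref{P:tensor_Weyl} and~\ref{P:tensor_unique}), so ${}^{w_1}\theta$ is again an exceptional-type representation $(\etat\otimest\theta_\chi)_\omega$ (resp.\ $(\etat\otimest\vartheta_\chi)_\omega$), now on $\GL_1\times\GL_{r-1}$; conjugating by the longest Weyl element $w_0$, which carries $P_{1,r-1}$ onto $Q=P_{r-1,1}$, identifies $E'(g,s';h)$ with an Eisenstein series of precisely the type treated in Theorem~\ref{T:main1}. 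Hence $E'(g,s';h)$ is holomorphic for $\Re(s')\geq 0$ except for a possible simple pole at $s'=\frac{1}{4}$ under the stated condition; equivalently $E'(g,-s;\cdot)$ is holomorphic for $\Re(s)\leq 0$ except for a possible simple pole at $s=-\frac{1}{4}$.

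It then remains to check that $s\mapsto A^\ast(s,\theta,w_1)f^s$ is a holomorphic family of sections on $\Re(s)<0$, so that it introduces no extra poles. By Theorem~\ref{T:normalized_intertwining} its only possible poles occur where the complete $L$-function $L(r(2s+\frac{1}{2})-r+1,\chi^2\eta^{-2})$ (resp.\ $L(r(2s+\frac{1}{2})-r+1,\chi\eta^{-2})$) has a pole, i.e.\ --- when the character is trivial --- at $s=\frac{1}{4}$ and $s=\frac{1}{4}-\frac{1}{2r}$, both of which have strictly positive real part because $r>2$. Therefore $A^\ast(s,\theta,w_1)f^s$ is holomorphic for $\Re(s)<0$, and so $E^\ast(g,s;f^s)=E'\bigl(g,-s;A^\ast(s,\theta,w_1)f^s\bigr)$ is holomorphic for $\Re(s)<0$ except for a possible simple pole at $s=-\frac{1}{4}$. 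Combining the two half-planes gives the theorem.

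I expect the only delicate point to be the middle step: recognizing the associate Eisenstein series $E'$ as one falling under Theorem~\ref{T:main1}, which requires tracking how $w_1$ and $w_0$ act on the metaplectic tensor products $\theta_{\chi,\eta}$ (resp.\ $\vartheta_{\chi,\eta}$) and on the central character $\omega$ --- precisely what Propositions~\ref{P:tensor_Weyl} and~\ref{P:tensor_unique} are designed to supply. Note also that this route deliberately avoids the pitfall in \cite{BG} flagged earlier: the $S$-dependent normalization is absorbed into $A^\ast(s,\theta,w_1)f^s$ and one then applies the \emph{unnormalized} statement of Theorem~\ref{T:main1} to $E'$, so no inductive hypothesis about the set $S$ is ever needed; all the genuinely analytic input has already been provided by Theorems~\ref{T:main1} and~\ref{T:normalized_intertwining}.
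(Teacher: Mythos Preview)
Your proposal is correct and follows the same strategy as the paper: holomorphy on $\Re(s)\geq 0$ from Theorem~\ref{T:main1} together with the normalizing factor being in its half-plane of absolute convergence, and holomorphy on $\Re(s)<0$ via the functional equation combined with Theorem~\ref{T:normalized_intertwining}. The one presentational difference is how the associate Eisenstein series $E'$ is handled. The paper does not try to conjugate back to $Q$; instead it states a parallel result (Theorem~\ref{T:main_opposite}) for the parabolic ${}^{\iota}Q=P_{1,r-1}$ with inducing data ${}^{\iota}\theta$, whose proof repeats that of Theorem~\ref{T:main1} verbatim but using Proposition~\ref{P:tensor_restriction}(b) in place of (a). Your conjugation-by-$w_0$ shortcut is not quite literal---no Weyl element carries the \emph{standard} parabolic $P_{1,r-1}$ onto the \emph{standard} parabolic $P_{r-1,1}$ (conjugation lands in the opposite parabolic)---so one cannot simply invoke Theorem~\ref{T:main1} as a black box; one really does need to rerun the argument for ${}^{\iota}Q$. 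You correctly flag this as the delicate point, and the remedy is exactly the separate statement the paper provides.
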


The idea of the proof is by now standard in that we use the functional
equation of the Eisenstein series and the analytic behavior of the normalized
intertwining operator $A^*(s,\theta,w_1)$ we obtained in Section
\ref{S:normalized_intertwining}. 

To use the functional
equation, however we need to introduce the
``opposite Eisenstein series'' as follows.
Let $\iQ=P_{1,r-1}^r$ be the standard $(1,r-1)$-parabolic of
$\GL_{r}$. Define a representation $\itheta$ of the Levi part $\iMQt$ by
\[
\itheta:=\begin{cases}(\eta\otimest\theta_\chi)_\omega,\quad{\text{if
    $r=2q$}}\\
(\eta\otimest\vartheta_\chi)_\omega,\quad{\text{if
    $r=2q+1$}},
\end{cases}
\]
where $\omega$ is arbitrary as long as
it satisfies the requirement for the metaplectic tensor product. Then
consider the global induced space
$\Ind_{\iQ(\A)}^{\GLt_r(\A)}\itheta\otimes\delta_{\iQ}^s$. Note that
this induced representation is nothing but the codomain of our
intertwining operator $A(s,\theta,w_1)$. Form the
corresponding ``opposite Eisenstein series''
\[
^\iota E(g,s;f^s)=\sum_{\gamma\in \iQ(F)\backslash\GL_r(F)}f^s(\s(\gamma)g;1)
\]
for $f^s\in \Ind_{\iQ(\A)}^{\GLt_r(\A)}\itheta\otimes\delta_{\iQ}^s$. Then we
have
\begin{Thm}\label{T:main_opposite}
The above Eisenstein series $^\iota
E(g,s;f^s)$ is holomorphic for $\Re(s)\geq 0$ except that it has a possible
simple pole at $s=\frac{1}{4}$ if $\chi^2\eta^{-2}= 1$ and $r=2q$,
or $\chi\eta^{-2}= 1$ and $r=2q$.
\end{Thm}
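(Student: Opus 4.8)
The plan is to deduce Theorem \ref{T:main_opposite} directly from Theorem \ref{T:main1} by conjugating everything by a single Weyl element, so that no new analytic computation is needed. Let $w\in W$ be the permutation matrix that cycles the last coordinate to the first, chosen so that $wQw^{-1}=\iQ$; it carries the Levi $M_Q=\GL_{r-1}\times\GL_1$ (with the $\GL_{r-1}$-block in the first $r-1$ coordinates) to $M_{\iQ}=\GL_1\times\GL_{r-1}$, carries $N_Q$ onto $N_{\iQ}$, and hence carries $\delta_Q$ to $\delta_{\iQ}$. Since $w\in\GL_r(F)$ and $\s$ is a homomorphism on $\GL_r(F)$, conjugation by $\s(w)$ is an (inner) automorphism of $\GLt_r(\A)$ carrying $\Qt(\A)$ onto $\iQt(\A)$; the point is that it also carries the inducing data onto the inducing data of $^\iota E$.

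First I would record that $^w\theta\cong\itheta$ \emph{with the same central character $\omega$}. For $r=2q$ the restriction of $w$ to $M_Q$ simply interchanges the $\GL_{r-1}$- and $\GL_1$-factors, so Proposition \ref{P:tensor_Weyl} gives $^{w}(\theta_\chi\otimest\etat)_\omega\cong(\etat\otimest\theta_\chi)_\omega=\itheta$; the same holds for $r=2q+1$ with $\vartheta_\chi$ in place of $\theta_\chi$, and when $r$ is odd and $\chi^{1/2}$ exists one reduces to the case $\vartheta_\chi=\theta_{\chi^{1/2}}$ exactly as in the proof of Theorem \ref{T:main1}. Consequently, for $f^s\in\Ind_{\Qt(\A)}^{\GLt_r(\A)}\theta\otimes\delta_Q^s$, the function $f'^s(x):=f^s(\s(w)^{-1}x\s(w))$ lies in $\Ind_{\iQt(\A)}^{\GLt_r(\A)}\itheta\otimes\delta_{\iQ}^s$, and $f^s\mapsto f'^s$ is a bijection between these two induced spaces. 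Using $\s(w)^{-1}\s(\gamma)\s(w)=\s(w^{-1}\gamma w)$ for $\gamma\in\GL_r(F)$ and the fact that $\gamma\mapsto w^{-1}\gamma w$ gives a bijection between $\iQ(F)\backslash\GL_r(F)$ and $Q(F)\backslash\GL_r(F)$, one obtains the elementary identity
\[
{}^\iota E(g,s;f'^s)=\sum_{\gamma\in\iQ(F)\backslash\GL_r(F)}f'^s(\s(\gamma)g)=E\bigl(\s(w)^{-1}g\s(w),\ s;\ f^s\bigr).
\]
Since $g\mapsto\s(w)^{-1}g\s(w)$ is a bijection of $\GLt_r(\A)$, the analytic behaviour in $s$ of $^\iota E(\cdot,s;\cdot)$ is literally that of $E(\cdot,s;\cdot)$, and Theorem \ref{T:main1} then yields the asserted holomorphy for $\Re(s)\ge 0$, with a possible simple pole only at $s=\tfrac14$ and only when $\chi^2\eta^{-2}=1$, $r=2q$, or $\chi\eta^{-2}=1$, $r=2q+1$.

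The only delicate points — and the closest thing to an obstacle — are bookkeeping: verifying that the inner automorphism $\s(w)(-)\s(w)^{-1}$ really matches the metaplectic preimages $\Qt$ and $\iQt$ together with the inducing representations, including the central character $\omega$ (handled by $\s|_{\GL_r(F)}$ being a homomorphism together with Proposition \ref{P:tensor_Weyl}), and the case distinction for odd $r$ according to whether $\chi^{1/2}$ exists. Should one prefer to avoid the conjugation argument altogether, the alternative is to rerun the proof of Theorem \ref{T:main1} verbatim with $\iQ$, $N_{\iQ}$ in place of $Q$, $N_Q$ and the mirror-image choices of Weyl elements; then the only substantive step is again the cancellation of the potential pole at $s=\tfrac14-\tfrac1{2r}$ for $r=2q$, which goes through by the same reduction to the $\GLt_2$-situation and Proposition \ref{P:action_at_s=0}.
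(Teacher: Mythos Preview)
Your conjugation argument contains a genuine gap: for $r>2$ there is \emph{no} element $g\in\GL_r$ (in particular no Weyl element) with $gQg^{-1}={}^\iota Q$. Two distinct standard parabolics of $\GL_r$ are never conjugate; $Q=P_{r-1,1}$ and ${}^\iota Q=P_{1,r-1}$ are merely \emph{associate} (their Levi factors are Weyl-conjugate). Concretely, the cyclic $w$ you describe does carry $M_Q$ to $M_{{}^\iota Q}$, but it sends the root $e_i-e_r$ of $N_Q$ to the \emph{negative} root $e_{i+1}-e_1$, so $wN_Qw^{-1}=\overline{N}_{{}^\iota Q}$ and hence $wQw^{-1}=\overline{{}^\iota Q}$, the opposite parabolic. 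Consequently the function $f'^s(x)=f^s(\s(w)^{-1}x\s(w))$ does \emph{not} lie in $\Ind_{{}^\iota\Qt}^{\GLt_r}{}^\iota\theta\otimes\delta_{{}^\iota Q}^s$: for $n'\in N_{{}^\iota Q}(\A)$ one has $\s(w)^{-1}n'\s(w)\in\overline{N}_Q(\A)^\ast\not\subseteq\Qt(\A)$, so the required left-equivariance fails. The displayed identity ${}^\iota E(g,s;f'^s)=E(\s(w)^{-1}g\s(w),s;f^s)$ therefore does not hold, and the reduction to Theorem~\ref{T:main1} collapses. (One also cannot rescue this via the functional equation: that relates ${}^\iota E$ at $-s$ to $E$ at $s$, the wrong half-plane.)

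The alternative you sketch in your final paragraph --- rerunning the proof of Theorem~\ref{T:main1} with ${}^\iota Q$ in place of $Q$ and mirror-image Weyl elements --- is exactly the paper's approach, and is the correct one. The paper emphasizes one genuinely new ingredient you would need to make this precise: when carrying out the induction ``from the bottom'' (taking the constant term along $P_{r-1,1}$ for $r=2q$, so that the identity term becomes an Eisenstein series on the copy of $\GLt_{r-1}$ in the \emph{upper left} corner), one must restrict the metaplectic tensor product to that corner, which requires part~(b) of Proposition~\ref{P:tensor_restriction} rather than part~(a). This forces a specific realization of ${}^\iota\theta$, and is the reason the paper states the proposition in two parts.
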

\begin{proof}
The proof is completely identical to Theorem \ref{T:main1}, except
that we need use the parabolic subgroup $^\iota Q$. However when one
applies the induction argument to compute the possible poles of the
Eisenstein series, one needs to induct ``from the bottom''. Namely, say
if $r=2q$, one
considers the constant term along the $(r-1,1)$-parabolic, and then the
identity term will be interpreted as the Eisenstein series on
$\GLt_{r-1}$ embedded in the upper left corner. To do so, one needs
part (b) of Proposition \ref{P:tensor_restriction}, and hence one needs to
assume that the metaplectic tensor product is realized as such.
\end{proof}

Now we are ready to provide 
\begin{proof}[Proof of Theorem \ref{T:main2}]
This follows from the functional equation together with the holomorphy
of the two Eisenstein series (Theorem
\ref{T:main1} and \ref{T:main_opposite}) and the holomorphy of the
normalized intertwining operator (Theorem
\ref{T:normalized_intertwining}). Though the argument seems to be
standard by now, we will repeat it in what follows. We only treat the
case $r=2q$, and the other case is identical. 

Note that since we know the holomorphy for $\Re(s)\geq 0$ by
Theorem \ref{T:main1} and the partial $L$-function
$L^S(r(2s+\frac{1}{2}),\chi^2\eta^{-2})$ is holomorphic for $\Re(s)\geq
0$, we only have to consider $\Re(s)<0$. Now by the functional
equation of the Eisenstein series, one has
\[
^\iota E(g,-s; A(s,\theta,w_1)f^s)=E(g,s;f^{s}).
\]
By multiplying the normalizing factor
$L^S(r(2s+\frac{1}{2}),\chi^2\eta^{-2})$ to both sides, we have
\begin{equation}\label{E:functional_equation}
^\iota E(g,-s; A^\ast(s,\theta,w_1)f^s)=L^S(r(2s+\frac{1}{2}),\chi^2\eta^{-2}) E(g,s;f^{s}),
\end{equation}
where the right hand side is nothing but the normalized Eisenstein
series $E^\ast(g,s,f^{s})$.

Assume $\chi^2\eta^{-2}\neq 1$. Then $A^\ast(s,\theta,w_1)f^s$
is holomorphic for all $s\in\C$ by Theorem \ref{T:normalized_intertwining},
and hence by Theorem \ref{T:main_opposite}, the left hand side (and
hence the right hand side) is
holomorphic for $\Re(s)\leq 0$. Assume that $\chi^2\eta^{-2}=1$. Then
$A^\ast(s,\theta,w_1)f^s$ is
holomorphic for $\Re(s)<0$ again by Theorem
\ref{T:normalized_intertwining}. But by Theorem \ref{T:main_opposite},
the left hand side (and the right hand side) of (\ref{E:functional_equation}) has a possible
simple pole at $s=-\frac{1}{4}$.
\end{proof}


\section{\bf The twisted symmetric square $L$-function}


Theorem \ref{T:main2} along with the integral representation of the
symmetric square $L$-function of $\GL_r$ obtained in \cite{Takeda1}
immediately implies
\begin{Thm}\label{T:main3}
Let $\pi$ be a cuspidal automorphic representation of $\GL_r(\A)$ with unitary
central character $\omega_\pi$ and $\chi$ a unitary Hecke
character. For a sufficiently large finite set $S$ of places, the (incomplete)
twisted symmetric square $L$-function $L^S(s,\pi, Sym^2\otimes\chi)$
is holomorphic everywhere except that if $\chi^r\omega_\pi^2=1$ it has a
possible simple pole at $s=0$ and $s=1$. Indeed, the set $S$ can be taken
to be precisely the set of archimedean places, places dividing 2 and non-archimedean
places at which either $\pi$ or $\chi$ is ramified.
\end{Thm}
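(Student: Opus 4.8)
The plan is to read the theorem off from Theorem~\ref{T:main2} by means of the integral representation of the twisted symmetric square $L$-function established in \cite{Takeda1}, in which the present Eisenstein series $E(g,\sigma;f^\sigma)$ (I write $\sigma$ for the Eisenstein variable, reserving $s$ for the $L$-function variable) is exactly the inducing Eisenstein series. First I would recall from \cite{Takeda1} the global Rankin--Selberg integral $Z(\phi,\Theta,f^\sigma)=\int_{Z(\A)\GL_r(F)\backslash\GL_r(\A)}\phi(g)\,\Theta(\kappa(g))\,E(\kappa(g),\sigma;f^\sigma)\,dg$ for $r=2q+1$, and the analogous integral of \cite{Takeda1} for $r=2q$, where $\phi$ ranges over cusp forms in $\pi$, $\Theta$ over automorphic forms in the exceptional representation of $\GLt_r(\A)$, and $f^\sigma$ over sections of $\Ind_{\Qt(\A)}^{\GLt_r(\A)}\theta\otimes\delta_Q^\sigma$ with $\theta=\theta_{\chi_0,\eta_0}$ or $\vartheta_{\chi_0,\eta_0}$ for data $(\chi_0,\eta_0)$ prescribed by $(\chi,\omega_\pi)$ through the unfolding of \cite{Takeda1}; under that dictionary the hypothesis $\chi_0^2\eta_0^{-2}=1$ (resp. $\chi_0\eta_0^{-2}=1$) of Theorem~\ref{T:main2} is exactly $\chi^r\omega_\pi^2=1$. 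The substance of \cite{Takeda1} is that after unfolding $Z(\phi,\Theta,f^\sigma)$ becomes an Euler product whose factor at an unramified place $v\notin S$ is $L_v(s,\pi_v,Sym^2\otimes\chi_v)$ divided by the corresponding local factor of the normalizing $L$-function, with $\sigma$ and $s$ related by an affine substitution that is compatible with the functional equations $\sigma\leftrightarrow-\sigma$ and $s\leftrightarrow 1-s$ and sends $\sigma=\frac{1}{4}$ to $s=1$ and $\sigma=-\frac{1}{4}$ to $s=0$, and the local integrals at $v\in S$ are holomorphic in $\sigma$. Multiplying through by the normalizing factor $L^S\bigl(r(2\sigma+\frac{1}{2}),\chi^r\omega_\pi^2\bigr)$ as in \eqref{E:normalized_Eisenstein_series} replaces $E$ by $E^\ast$ in the integrand and yields $\int\phi\,\Theta\,E^\ast(\kappa(\cdot),\sigma;f^\sigma)\,dg=\bigl(\prod_{v\in S}Z_v^\ast\bigr)\,L^S(s,\pi,Sym^2\otimes\chi)$ with each $Z_v^\ast$ ($v\in S$) holomorphic in $\sigma$.

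Next I would insert the analytic input. A cusp form $\phi\in\pi$ is rapidly decreasing, while $\Theta$, lying in a square-integrable automorphic representation, is of moderate growth; hence $\phi\cdot\Theta$ is rapidly decreasing on $Z(\A)\GL_r(F)\backslash\GL_r(\A)$, which has finite volume. Consequently $\int\phi\,\Theta\,E^\ast(\kappa(\cdot),\sigma;f^\sigma)\,dg$ converges absolutely and locally uniformly in $\sigma$ on any region where $E^\ast(\kappa(\cdot),\sigma;f^\sigma)$ is holomorphic, hence is holomorphic there; and at a point where $E^\ast$ has a simple pole the integral has at worst a simple pole, with residue the convergent integral of $\phi\,\Theta$ against $\Res E^\ast$, again an automorphic form of moderate growth. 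By Theorem~\ref{T:main2} (with the case $r=2$ handled directly via Theorem~\ref{T:main1}, Proposition~\ref{P:action_at_s=0} and the functional equation), $E^\ast$ is holomorphic on all of $\C$ except for at most simple poles at $\sigma=\frac{1}{4}$ and $\sigma=-\frac{1}{4}$, and these occur only if $\chi^r\omega_\pi^2=1$. Therefore $\bigl(\prod_{v\in S}Z_v^\ast\bigr)\,L^S(s,\pi,Sym^2\otimes\chi)$ is entire in $\sigma$ when $\chi^r\omega_\pi^2\neq1$, and otherwise holomorphic except for at most simple poles at $\sigma=\pm\frac{1}{4}$, i.e. at $s=1$ and $s=0$.

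Finally I would pass from this to $L^S$ itself. Since the local factors $Z_v^\ast$ ($v\in S$) are holomorphic, the displayed identity already confines the poles of $L^S(s,\pi,Sym^2\otimes\chi)$ to $s\in\{0,1\}$, with order at most one, and to the case $\chi^r\omega_\pi^2=1$; to see that these are poles of $L^S$ and not artifacts of the product $\prod_{v\in S}Z_v^\ast$, one invokes the standard non-vanishing of local zeta integrals, namely that for any $s_0$ one may choose local data at the places of $S$ so that $\prod_{v\in S}Z_v^\ast$ does not vanish at the corresponding $\sigma$, whence $L^S$ inherits holomorphy at every $s_0\notin\{0,1\}$. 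Renaming $\sigma$ and tracking which places actually enter the unfolding then gives the statement with $S$ equal to the archimedean places, the places dividing $2$ (where the metaplectic cover and its canonical unramified vectors are not available), and the non-archimedean places ramified for $\pi$ or $\chi$, since the auxiliary character $\eta_0$ and the central character $\omega$ of the metaplectic tensor product are unramified wherever $\chi$ and $\omega_\pi$ are. I expect the main obstacle to be exactly this last bookkeeping: extracting from \cite{Takeda1} both the holomorphy and the non-vanishing of the local integrals $Z_v^\ast$ at arbitrary points, and the precise verification that $S$ need not be enlarged beyond the stated set; granting these, Theorem~\ref{T:main2} supplies everything else.
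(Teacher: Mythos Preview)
Your proposal is correct and follows essentially the same route as the paper: invoke the Rankin--Selberg identity of \cite{Takeda1} to express $L^S(s,\pi,Sym^2\otimes\chi)$ times a product of local integrals at $S$ as the normalized integral against $E^\ast$, apply Theorem~\ref{T:main2} to locate the possible poles, and use the non-vanishing of the local integrals (for suitable choices of data) to transfer the holomorphy to $L^S$. The paper is slightly more explicit about the inducing data (taking $\theta=\vartheta_{\chi\omega_\pi^2,\chi^{-q}}$ for $r=2q+1$ and $\theta=\theta_{\omega_\pi,\chi^{-q}}$ for $r=2q$) and simply cites \cite{GJ} for $r=2$ rather than redoing that case via Theorem~\ref{T:main1} and the functional equation, but these are cosmetic differences.
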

\begin{proof}
Assume $r=2q+1$. Set
$\theta=\vartheta_{\chi\omega_\pi^2,\chi^{-q}}=
(\vartheta_{\chi\omega_\pi^2}\otimest\chit^{-q})_\omega$,
where $\omega$ is chosen appropriately as in
\cite[(2.57)]{Takeda1}. (As one can see from there, the bad places for
$\omega$ are dependent on the choice of the additive
character $\psi$. However, one can always choose $\psi$ so that the
bad places of $\omega$ are either dividing 2 or contained in those of $\chi$ or $\pi$.)
Then in \cite{Takeda1} we defined the zeta integral
\[
Z(\phi,\theta,f^s)=\underset{Z(\A)\GL_r(F)\backslash\GL_r(\A)}{\int}
\phi(g)\Theta(\kappa(g))E(\kappa(g),s;f^s)\,dg
\]
where $\phi\in\pi$,
$f^s\in\Ind_{\Qt(\A)}^{\GLt_r(\A)}\theta\otimes\delta_Q^s$, and $\Theta$
is an automorphic form in the twisted exceptional representation
$\theta_{\omega^{-1}_\pi}$ on $\GLt_r(\A)$. Now if
$f^s=f_\infty^s\otimes(\otimes'f_v^s)$ is an (almost) factorizable
section, we have shown in \cite{Takeda1} that
\[
L^S(2s+\frac{1}{2}, \pi, Sym^2\otimes \chi)Z_S(s)=L^S(r(2s+\frac{1}{2}),\chi^r\omega^2)Z(\phi,\theta,f^s)
\]
for an appropriately chosen $\phi$ and $\theta$, where $Z_S(s)$ is a
product of local zeta integrals. (Note that in \cite{Takeda1} the
induced representation for the Eisenstein series is not normalized and
hence there is a shift by $\frac{1}{2}$.) Here we may assume that the
local section $f_v^s$ is unramified if $v\notin S$. Moreover one can
choose the section so that $Z_S(s)$ is non-zero holomorphic, and hence the poles of
$L^S(2s+\frac{1}{2}, \pi, Sym^2\otimes \chi)$ are the poles of
$L^S(r(2s+\frac{1}{2}),\chi^r\omega^2)Z(\phi,\theta,f^s)$, which are
among the poles of the normalized Eisenstein series
$E^\ast(\kappa(g),s;f^s)$, where the normalization is with respect to
$S$. Hence by using Theorem \ref{T:main2}, we see that the incomplete
$L$-function $L^S(2s+\frac{1}{2}, \pi,
Sym^2\otimes \chi)$ is entire except that if $\chi^r\omega_\pi^2=1$ it
has a possible pole at $s=-\frac{1}{4}$ and $s=\frac{1}{4}$.

Assume $r=2q$ and $r>2$. (If $r=2$, the theorem is already well-known
by the work of Gelbart and Jacquet (\cite{GJ}).) Set
$\theta=\theta_{\omega_\pi,\chi^{-q}}=(\theta_{\omega_\pi}\otimest\chit^{-q})_\omega
=\theta_{\omega_\pi}\otimest\chit^{-q}$. (There
is no actual choice for $\omega$ here because the center of
$\GLt_{2q}$ is already contained in $\MQtt$, and that is why we simply
write $\theta_{\omega_\pi}\otimest\chit^{-q}$.) Then in \cite{Takeda1}
we have shown that the twisted symmetric square $L$-function is
represented by
\[
Z(\phi,\theta,f^s)=\underset{Z(\A)\GLtwo_r(F)\backslash\GLtwo_r(\A)}{\int}
\phi(g)\Theta(\kappa(g))E(\kappa(g),s;f^s)\,dg
\]
where the Eisenstein series $E(\kappa(g),s;f^s)$ is a restriction to
$\GLtt_r(\A)$ of the Eisenstein series associated with the induced
space $\Ind_{\Qt(\A)}^{\GLt_r(\A)}\theta\otimes\delta_Q^s$, and
$\Theta$ is a restriction of an automorphic form in the exceptional
representation $\vartheta_{\chi\omega_\pi^2}$. (It should be mentioned
that both the restriction of the Eisenstein series and the restriction
of the exceptional representation depends on the choice of $\psi$. See \cite{Takeda1} for
the details. But once again, one can choose $\psi$, so that the bad
places incurred by $\psi$ are either diving 2 or contained in the bad places of $\chi$ or
$\pi$.) Then by arguing as above, we can derive that the poles
of the twisted symmetric square $L$-function are among the poles of
the restriction of the normalized Eisenstein series
$E^\ast(\kappa(g),s;f^s)$, which are among the poles of the normalized
Eisenstein series itself. But by Theorem \ref{T:main2} we see that the
normalized Eisenstein series is entire except that if
$\chi^r\omega_\pi=1$ it has a possible simple pole at $s=-\frac{1}{4}$
and $s=\frac{1}{4}$. The proof is complete.
\end{proof}

\quad\\


\begin{thebibliography}{999999}

\bibitem[Ad]{Adams} J. Adams, {\it Extensions of tori in ${\rm
    SL}(2)$}, Pacific J. Math. 200 (2001), 257–271.




\bibitem[B1]{Banks} W. Banks, {\it Exceptional representations of the
metaplectic group}, Ph.D Thesis, Stanford University (1994).

\bibitem[B2]{Banks2} W. Banks, {\it Twisted symmetric-square
$L$-functions and the nonexistence of Siegel zeros on ${\GL}(3)$},
Duke Math. J. 87 (1997), 343--353.

\bibitem[BLS]{BLS} W. Banks, J. Levy, M. Sepanski, {\it
Block-compatible metaplectic cocycles}, J. Reine Angew. Math. 507
(1999), 131--163.





\bibitem[BG]{BG} D. Bump and D. Ginzburg, {\it Symmetric square
$L$-functions on ${\rm GL}(r)$}, Ann. of Math. 136 (1992), 137--205.


\bibitem[F]{F} Y. Flicker, {\it Automorphic forms on covering groups
of ${\GL}(2)$}, Invent. Math. 57 (1980), 119--182.

\bibitem[FK]{FK} Y. Flicker and D. Kazhdan, {\it Metaplectic
[correspondence}, Inst. Hautes Etudes Sci. Publ. Math. No. 64 (1986), 53--110

\bibitem[G]{G} S. Gelbart, {\it Weil's representation and the spectrum
    of the metaplectic group}, Lecture Notes in Mathematics,
  Vol. 530. Springer-Verlag, Berlin-New York, 1976.

\bibitem[GJ]{GJ} S. Gelbart, and H. Jacquet, {\it A relation between
automorphic representations of ${\GL}(2)$ and ${\GL}(3)$},
Ann. Sci. Ecole Norm. Sup. (4) 11 (1978), 471--542.

\bibitem[GPS]{GPS} S. Gelbart and I. I. Piatetski-Shapiro, {\it
Distinguished representations and modular forms of half-integral
weight} Invent. Math. 59 (1980), 145--188.

\bibitem[GPSR]{GPSR} S. Gelbart, I. Piatetski-Shapiro, and S. Rallis,
  {\it Explicit constructions of automorphic L-functions}, Lecture
  Notes in Mathematics, 1254. Springer-Verlag, Berlin, 1987.

\bibitem[HT]{HT} M. Harris and R. Taylor, {\it The geometry and
cohomology of some simple Shimura varieties}, With an appendix by
Vladimir G. Berkovich, Annals of Mathematics Studies, 151. Princeton
University Press, Princeton, NJ, 2001.

\bibitem[He]{He} G. Henniart, {\it Une preuve simple des conjectures
de Langlands pour ${\GL}(n)$ sur un corps $p$-adique},
Invent. Math. 139 (2000), 439--455.

\bibitem[Ik]{Ikeda} {\it On the location of poles of the triple
    L-functions}, Compositio Math. 83 (1992), 187--237. 


\bibitem[Ji]{Jiang} D. Jiang, {\it Degree 16 standard $L$-function of
    $\GSp(2)\times\GSp(2)$} Mem. Amer. Math. Soc. 123 (1996), no. 588,
  viii+196 pp.



\bibitem[KP]{KP} D. A. Kazhdan and S. J. Patterson, {\it Metaplectic
forms}, Inst. Hautes Etudes Sci. Publ. Math. No. 59 (1984), 35--142.


\bibitem[Kub]{Kubota} T. Kubota, {\it On automorphic functions and the
    reciprocity law in a number field}, Lectures in Mathematics,
  Department of Mathematics, Kyoto University, No. 2 Kinokuniya
  Book-Store Co., Ltd., Tokyo 1969 iii+65 pp. 




\bibitem[KR]{KR} S. Kudla and S. Rallis, {\it Poles of
    Eisenstein series and L-functions},
Festschrift in honor of I. I. Piatetski-Shapiro on the occasion of his
sixtieth birthday, Part II, 81--110.







\bibitem[Me]{Mezo} P. Mezo, {\it Metaplectic tensor products for
irreducible representations}, Pacific J. Math. 215 (2004), 85--96.

\bibitem[MW]{MW} C. Moeglin and J.-L. Waldspurger, {\it Spectral
    decomposition and Eisenstein series}, Cambridge Tracts in
  Mathematics, 113. Cambridge University Press, Cambridge, 1995.

\bibitem[PP]{PP} S. J.  Patterson and I. I.  Piatetski-Shapiro, {\it
The symmetric-square $L$-function attached to a cuspidal automorphic
representation of ${\rm GL}_3$}, Math. Ann. 283 (1989), 551--572.

\bibitem[PSR]{PSR} I.I. Piatetski-Shapiro and S. Rallis, {\it Rankin triple
    $L$ functions},  Compositio Math. 64 (1987), 31--115. 

\bibitem[R]{Rao}
R. Ranga Rao, {\it On some explicit formulas in the theory of Weil representation},
Pacific J. Math. 157 (1993), 335--371. 


\bibitem[Sh1]{Sh90} F. Shahidi, {\it A proof of Langlands' conjecture
on Plancherel measures; complementary series for $p$-adic groups}, 
 Ann. of Math. 132 (1990), 273--330.

\bibitem[Sh1]{Sh92} F. Shahidi, {\it Twisted endoscopy and
    reducibility of induced representations for p-adic groups}, Duke
  Math. J. 66 (1992), 1--41.





\bibitem[Shi]{Shimura} G. Shimura, {\it On the holomorphy of certain
Dirichlet series}, Proc. London Math. Soc. (3) 31 (1975), 79--98.


\bibitem[T1]{Takeda1} S. Takeda, {\it The twisted symmetric square
    $L$-function of $\GL(r)$},  Duke Math. J. 163 (2014), 175--266.

\bibitem[T2]{Takeda2} S. Takeda, {\it Metaplectic tensor products of
    automorphic representation of $\GLt(r)$}, Canad. J. Math. (to appear)


\end{thebibliography}
\end{document}